%
%
%
%
%

\documentclass{svjour2}                    
\smartqed  
\usepackage{graphicx}
%
%
\usepackage{amsmath}
\usepackage{amsfonts}
\usepackage{amssymb}
\usepackage{graphics}
\usepackage{graphicx}
\usepackage{amsbsy}
\usepackage{url}
\usepackage{enumerate}
\usepackage{verbatim}
\usepackage{fullpage}
\usepackage{multirow}
\usepackage{soul}
\usepackage{mathtools}
\usepackage{marginnote}

\usepackage{caption}
\usepackage{subcaption}
\captionsetup{compatibility=false}

\usepackage{hyperref}
\usepackage{cite}

\newtheorem{ass}{Assumption}

\graphicspath{{../../leah_berlin/figures/}}
%
%
%


%

\usepackage{xcolor}

\newcommand{\norm}[1]{\left|#1\right|}
\newcommand{\Norm}[1]{\left\lVert#1\right\rVert}

\newcommand{\E}{{\mathbb E}}

\newcommand{\cH}{{\mathcal H}}
\newcommand{\cS}{{\mathcal S}}
\newcommand{\cG}{{\mathcal G}}
\newcommand{\cN}{{\mathcal N}}

\newcommand{\T}{{\mathbb T}}

\newcommand{\cA}{\mathcal A}
\newcommand{\cQ}{\mathcal Q}

\newcommand{\cL}{\mathcal L}
\newcommand{\clr}{L^2(\T^d ; \pi(x) dx)}

\DeclareMathOperator{\Tr}{Tr}


%
%
%
%
%
%
\begin{document}

\title{Variance Reduction using Nonreversible Langevin Samplers}

\titlerunning{Short form of title}        

\author{A. B. Duncan         \and
        T. Leli\`{e}vre \and  
        G. A. Pavliotis
}

\authorrunning{Duncan, Leli\'{e}vre, Pavliotis} 

\begingroup
\institute{A. B. Duncan \at
              Imperial College London, Department of Mathematics, South Kensington Campus, London SW7 2AZ,England
              \email{a.duncan@imperial.ac.uk}           
           \and
           T. Leli\`{e}vre \at
            CERMICS, Ecole des ponts, Université Paris-Est, 6-8 avenue Blaise Pascal, 77455 Marne la Vallée cedex 2, France
            \email{lelievre@cermics.enpc.fr}
            \and
           G. A. Pavliotis \at
             Imperial College London, Department of Mathematics, South Kensington Campus, London SW7 2AZ,England \email{g.pavliotis@imperial.ac.uk}}

\date{Received: date / Accepted: date}

\maketitle

\begin{abstract}
A standard approach to computing expectations with respect to a given target measure is to introduce an overdamped Langevin equation which is reversible with respect to the target distribution, and to approximate the expectation by a time-averaging estimator.  As has been noted in recent papers \cite{rey2014irreversible,lelievre2013optimal,wu2014attaining,hwang2014variance}, introducing an appropriately chosen nonreversible component to the dynamics is beneficial, both in terms of reducing the asymptotic variance and of speeding up convergence to the target distribution. In this paper we present a detailed study of the dependence of the asymptotic variance on the deviation from reversibility. Our theoretical findings are supported by numerical simulations.

\end{abstract}

%
%
\section{Introduction}
\label{sec:intro}

%
%

\subsection{Motivation}
In various applications arising in statistical mechanics, biochemistry, data science and machine learning \cite{lelievre2010free,gelman2014bayesian,mackay2003information,liu2008monte}, it is often necessary to compute expectations $$\pi(f) := \mathbb{E}_{\pi}f = \int_{\mathbb{R}^d} f(x)\pi(dx)$$ of an observable $f$ with respect to a target probability distribution $\pi(dx)$ on $\mathbb{R}^d$ with density $\pi(x)$ with respect to the Lebesgue measure, known up to the normalization constant\footnote{With a slight abuse of notation, we will denote by $\pi$ both the measure and the density}.  When the dimension $d$ is large, standard deterministic quadrature approaches become intractable, and one typically resorts to Markov-Chain Monte Carlo (MCMC) methods \cite{robert2013monte,gelman2014bayesian,liu2008monte}.  In this approach, $\pi(f)$ is approximated by a long-time average of the form:
\begin{equation}
\label{eq:estimator}
  \pi_T(f) := \frac{1}{T}\int_0^T f(X_t)\,dt,
\end{equation}
where $X_t$ is a Markov process chosen to be ergodic with respect to the target distribution $\pi$.  The Birkhoff-von Neumann Ergodic theorem \cite{krengel1985ergodic, revuz1999continuous,jacod1987limit} states that, for every observable $f \in L^1(\pi)$ we have
\begin{equation}
\label{eq:lln}
  \lim_{T\rightarrow \infty} \frac{1}{T}\int_0^T f(X_s)\,ds = \pi(f), \quad \pi-\mbox{ a.e. } X_0 = x.
\end{equation}
If $\pi$ possesses a smooth, strictly positive density, then a natural choice for $X_t$ is the overdamped Langevin dynamics
\begin{equation}
\label{eq:sde1}
  dX_t = \nabla \log \pi(X_t)\,dt + \sqrt{2}\,dW_t,
\end{equation}
where $W_t$ is a standard Brownian motion on $\mathbb{R}^d$.  Assuming that (\ref{eq:sde1}) possesses a unique strong solution which is non-explosive, the process $X_t$ is ergodic, with unique invariant distribution $\pi$, such that (\ref{eq:lln}) holds.  Under additional assumptions on the distribution $\pi$ and on the observable, this convergence result is accompanied by a central limit theorem which characterizes the asymptotic distribution of the fluctuations of $\pi_T(f)$ about $\pi(f)$, i.e.
\begin{equation}
\label{eq:clt}
  \sqrt{T}\left(\pi_T({f}) - \pi({f})\right) \xrightharpoonup{\mathcal{D}} \mathcal{N}(0, \sigma_f^2),
\end{equation}
where $\sigma_f^2$ is known as the asymptotic variance for the observable $f$.   For the reversible process (\ref{eq:sde1}) started from stationarity (i.e. $X_0 \sim \pi$), the Kipnis-Varadhan theorem \cite{kipnis1986central,cattiaux2012central} implies that \eqref{eq:clt} holds with asymptotic variance 
$$\sigma_f^2 = 2\langle f- \E_{\pi} f, (-\mathcal{S})^{-1} (f-\E_{\pi} f)\rangle_{\pi},$$ 
where $\langle \cdot, \cdot \rangle_\pi$ is the inner product in $L^2(\pi)$ and  $\mathcal{S}$ is the infinitesimal generator of $X_t$ defined by
\begin{equation}
\label{eq:generator_rev}
\mathcal{S} = \nabla \log \pi(x) \nabla \cdot + \Delta .
\end{equation}


Sampling methods based on Langevin diffusions have become increasingly popular due to their wide applicability and relative ease of implementation.  In practice, a discretisation of (\ref{eq:sde1}) may be used, which in general, will not be ergodic with respect to the target distribution $\pi$.  Thus, the  discretisation is augmented with a Metropolis-Hastings accept/reject step \cite{roberts1996exponential} which guarantees that the resulting Markov chain remains reversible with respect to $\pi$.  The resulting scheme is known as the Metropolis-Adjusted Langevin Algorithm (MALA), see \cite{roberts1996exponential}.  
\\\\
The computational cost required to approximate $\pi(f)$ using $\pi_T(f)$ to a given level of accuracy depends on the target distribution $\pi$, the observable $f$ and the process $X_t$ over which the long-time average is generated.  { Quite often, $X_t$ will exhibit some form of metastability \cite{schutte2013metastability,bovier2002metastability,bovier2005metastability,lelievre2013two}: the process $X_t$ will remain trapped for a long time exploring one mode,  with transitions between different modes occurring over longer timescales.  When the observable depends directly on the metastable degrees of freedom (i.e. the observable takes different values in different metastable regions)}, the asymptotic variance $\sigma_f^2$ of the estimator $\pi_T(f)$ may be very large.  As a result, more samples are required to obtain an estimate of $\pi(f)$ with the desired accuracy.  A similar scenario arises when the mass of $\pi$ is tightly concentrated along a low-dimensional submanifold of $\mathbb{R}^d$, as illustrated in Figure~\ref{fig:warped_trajectory}.  In this case, reversible dynamics, such as (\ref{eq:sde1}) will cause a very slow exploration of the support of $\pi$.  As a result, $\pi_T(f)$ will exhibit very large variance for observables which vary strongly along the manifold.
\\\\
\begin{figure}[htp]
\begin{center}
\includegraphics[width=0.6\textwidth]{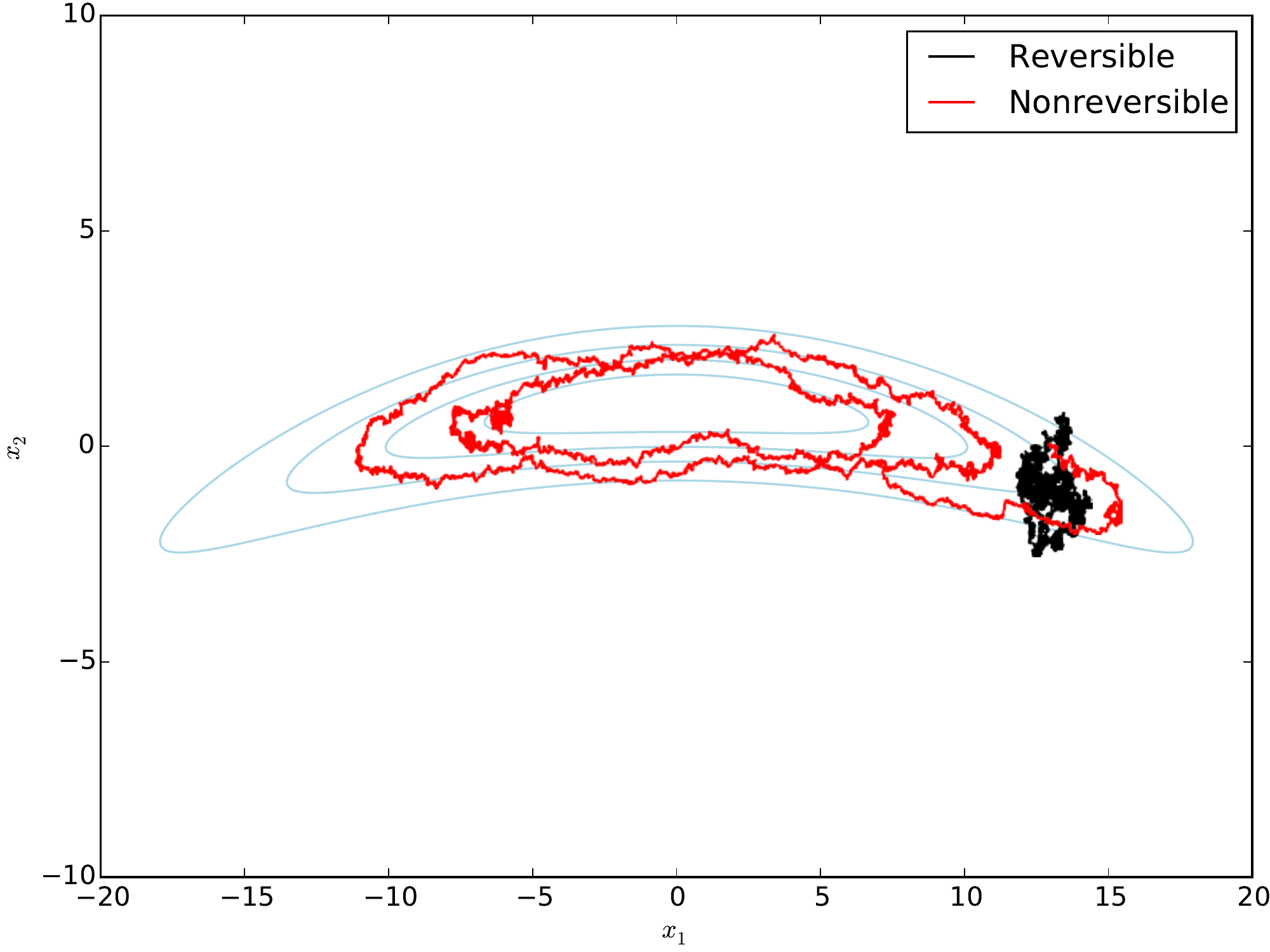}
\end{center}
\caption{Trajectories of a reversible MALA chain and a nonreversible Langevin sampler generating samples from a warped Gaussian distribution.  The mass of the distribution is contentrated along a one-dimensional submanifold.  Reversible samplers, such as MALA, perform a very slow exploration of the distribution, spending large amounts of time concentrated around a small region of the submanifold.  On the other hand, nonreversible samplers are able to perform long ``jumps'' along the level-curves of the distribution, thus able to explore the distribution far more effectively.}
\label{fig:warped_trajectory}
\end{figure}
As there are infinitely\footnote{Formally, all diffusion processes $X_t$ with drift $b(x)$ and diffusion $\sigma(x)$ and generator $\cL = b\cdot\nabla + \frac{1}{2}\sigma\sigma^\top:\nabla\nabla$ such that $\pi$ is the unique solution of the stationary Fokker-Planck equation $\cL^{\star} \pi =0$ can be used to sample from $\pi$.} many Markov processes with invariant distribution $\pi$, a natural question is whether such a process can be chosen to have optimal performance.  The two standard optimality criteria that are commonly used are: 
\begin{enumerate}[(a)]
\item With respect to speeding up convergence to the target distribution.
\item With respect to minimizing the asymptotic variance. 
\end{enumerate}
These criteria can be used in order to introduce a partial ordering in the set of Markov chains or diffusion processes that are ergodic with respect to a given probability distribution \cite{peskun1973optimum,Mira2001}.   From a practical perspective, the definite optimality criterion is that of minimizing the computational cost.  We address this issue (at least partially) later in this paper.
\\\\
Within the family of reversible samplers, much work has been done to derive samplers which exhibit good (if not optimal) computational performance.  This has motivated a number of variants of MALA which exploit geometric features of $\pi$ to explore the state space more effectively, including preconditioned MALA \cite{roberts2002langevin}, Riemannian Manifold MALA \cite{girolami2011riemann} and Stochastic Newton Methods \cite{martin2012stochastic}.  

\subsection{Nonreversible Langevin Dynamics}
An MCMC scheme which departs from the assumption of reversible dynamics is Hamiltonian MCMC \cite{neal2011mcmc}, which has proved successful in Bayesian inference.  By augmenting the state space with a momentum variable, proposals for the next step of the Markov chain are generated by following Hamiltonian dynamics over a large, fixed time interval.  The resulting nonreversible chain is able to make distant proposals.  Various methods have been proposed which are related to this general idea of breaking nonreversibility by introducing an additional dimension to the state space and introducing dynamics which explore the enlarged space while still preserving the equilibrium distribution.  In particular, the lifting method  \cite{turitsyn2011irreversible,hukushima2013irreversible,diaconis2000analysis} is one such method applied to discrete state systems, where the Markov chain is ``lifted'' from the state space $\Omega$ onto the space $\Omega\times \lbrace 1, -1 \rbrace$.  The transition probabilities in each copy are modified to introduce transitions between the copies to preserve the invariant distribution but now promote the sampler to generate long strides or trajectories.   Similar methods based on introducing nonreversibility into the dynamics of discrete state chains to speed up convergence have been applied with success in various applications \cite{neal2004improving,suwa2012general,sun2010improving,bierkens2014non,hennequin2014fast}.  These methods are also reminiscent of parallel tempering or replica exchange MCMC \cite{neal1996sampling}, which are aimed at efficiently sampling from multimodal distributions.
\\\\
It is well documented that breaking detailed balance, i.e. considering  a nonreversible  diffusion process that is ergodic with respect to $\pi$, can help accelerate convergence to equilibrium. In fact, it has been proved that, among all diffusion processes with additive noise that are ergodic with respect to $\pi$, the reversible dynamics~\eqref{eq:sde1} has the slowest rate of convergence to equilibrium, measured in terms of the spectral gap of the generator in $L^2(\mathbb{R}^d; \, \pi) = : L^2(\pi)$, c.f. \cite{lelievre2013optimal}. Adding a drift to~\eqref{eq:sde1} that is divergence-free with respect to $\pi$ and that preserves the invariant measure of the dynamics will always accelerate convergence to equilibrium~\cite{hwang1993accelerating,hwang2005accelerating,rey2014irreversible,lelievre2013optimal,wu2014attaining,ohzeki2013simple}. The optimal nonreversible perturbation can be identified and obtained in an algorithmic manner for diffusions with linear drift, whose invariant distribution is Gaussian \cite{lelievre2013optimal}; see also~\cite{wu2014attaining}. 
\\\\
The effect of nonreversibility in the dynamics on the asymptotic variance has also been studied. In \cite{rey2014irreversible} it is shown that small antisymmetric perturbations of the reversible dynamics always decrease the asymptotic variance, and more recently in \cite{rey2014variance} Friedlin-Wentzell theory is used to study the limit of infinitely strong antisymmetric perturbations.  In~\cite{hwang2014variance} the authors use spectral theory for selfadjoint operators to study the effect of antisymmetric perturbations on diffusions on both $\mathbb{R}^d$ and on compact manifolds and provide a general comparison result between reversible and nonreversible diffusions. This work is related with previous studies on the behavior of the spectral gap of the generator when the strength of the nonreversible perturbation is increased~\cite{franke2010behavior,constantin2008diffusion}. 
\\\\
\subsection{Objectives of this paper}
In this paper we present a detailed analytical and computational study of the effect of adding a nonreversible drift to the reversible overdamped Langevin dynamics (\ref{eq:sde1}) on the asymptotic variance $\sigma^2_f$. We will consider the nonreversible dynamics defined by \cite{hwang1993accelerating,hwang2005accelerating} :
\begin{equation}
\label{e:nonreversible} dX^{\gamma}_{t} = \big(\nabla
\log \pi(X^{\gamma}_{t}) + \alpha \gamma(X^{\gamma}_{t}) \big) \, dt + \sqrt{2 } \, dW_{t},
\end{equation} 
where the smooth vector field $\gamma$ is taken to be divergence-free with respect to the distribution $\pi$,
\begin{equation}\label{e:divergence_free} \nabla \cdot \big( \gamma  \pi
\big) = 0.
\end{equation}
There are infinitely many vector fields that satisfy~\eqref{e:divergence_free} and a general formula can be derived using Poincar\'{e}'s lemma~\cite{hwang2005accelerating,rey2014variance}. We can, for example, construct such vector fields by taking  
\begin{equation}\label{e:perturb-gen}
\gamma (x)=J \nabla \log \pi(x), \quad J = - J^{T}.
\end{equation} 
In~\eqref{e:nonreversible} we have already normalized the various vector fields and we have introduced a parameter $\alpha$ which measures the strength of the deviation from reversible dynamics. The generator of the diffusion process $X_t^{\gamma}$ can be decomposed into a symmetric and an antisymmetric part in $L^2(\pi)$, representing the reversible and irreversible parts of the dynamics, respectively~\cite[Sec. 4.6]{pavliotis2014stochastic}:
\begin{equation}
\label{eq:operator_decomposition}
\mathcal{L} =  \mathcal{S} + \alpha\mathcal{A},
\end{equation}
where $\mathcal{S}$ is given in (\ref{eq:generator_rev}) and $\mathcal{A} = \gamma \cdot\nabla$.  We are particularly interested in quantifying the reduction in the asymptotic variance obtained caused by breaking detailed balance. It is well known~\cite{komorowski2012fluctuations,kipnis1986central}  that for square integrable observables, the asymptotic variance, denoted by  $\sigma^2_f(\alpha)$ can be written in terms of the solution of the Poisson equation
\begin{equation}
\label{eq:poisson}
  -( \mathcal{S} + \alpha\mathcal{A})\phi = f- \E_{\pi}f,
\end{equation}
as
\begin{equation}
\label{eq:asymptotic_var-intro}
\sigma_f^2 (\alpha) =  2\int \phi(- \cL) \phi \,\pi(dx).
\end{equation}

Our first goal is to obtain an explicit formula for $\sigma^2_f(\alpha)$ in terms of $\mathcal{S}$ and $\mathcal{A}$.   Moreover, through a spectral decomposition of the operator $(-\mathcal{S})^{-1}\mathcal{A}$ we investigate the dependence of the asymptotic variance on the strength of the nonreversible perturbation. Based on earlier work by Golden and Papanicolaou \cite{golden1983bounds}, Majda et al \cite{majda1993effect,avellaneda1991integral} and Bhattacharya et al \cite{bhattacharya1999multiscale,bhattacharya1989asymptotics}, in \cite{pavliotis2010asymptotic}, an expression is obtained for the effective self-diffusion coefficient for a tagged particle whose microscopic behaviour is determined by a nonreversible Markov process. The basic idea is the introduction of the operator 
\begin{equation} 
\label{eq:operator_G}
\mathcal{G} := (-\mathcal{S})^{-1} \mathcal{A}, 
\end{equation} 
which is skewadjoint in the weighted Sobolev space 
$$\cH^1 = \lbrace f \in L^2(\pi) \,: \pi(f) = 0 \mbox{ and } \langle (-\cS)f, f\rangle_{\pi} < \infty \rbrace,$$
where $\langle \cdot, \cdot\rangle_{\pi}$ denotes the $L^2(\pi)$ inner product;  see Section \ref{sec:var_anal}.  We follow a similar approach in this paper, from which a quite explicit formula for the asymptotic variance $\sigma_f^2(\alpha)$ is derived using the spectral theorem for selfadjoint operators. From this expression we can immediately deduce that the addition of a nonreversible perturbation reduces the asymptotic variance and we can also study the small and large $\alpha$ asymptotics of $\sigma_f^2$; see Theorem~\ref{thm:deff_expansion}.  One of the results that we prove is that the large $\alpha$ behaviour of the asymptotic variance depends on the detailed properties of the vector field $\gamma$: when the nullspace of the antisymmetric part of the generator $\cA = \gamma\cdot\nabla$ consists of only constants in $\cH^1$, then the asymptotic variance converges to $0$.  {Indeed, in this case, in the $|\alpha|\rightarrow \infty$ limit, the limiting dynamics become deterministic, characterized by the Liouville operator $\gamma\cdot\nabla$. On the other hand, when the nullspace of $\cA$ contains nontrivial functions in $\cH^1$ there will exist observables for which the asymptotic variance $\sigma_f^2$ converges to a positive constant as $|\alpha| \rightarrow \infty$}.
\\\\
The effect of the antisymmetric part on the long time dynamics of diffusion processes has been also  studied extensively in the context of turbulent diffusion \cite{majda1999simplified} and fluid mixing. The effect of an incompressible flow on the convergence of the solution to the advection--diffusion equation on a compact manifold to its mean value (i.e. when $\pi \equiv 1$) was first studied in~\cite{constantin2008diffusion}. In particular, the concept of a relaxation enhancing flow was introduced and it was shown that a divergence-free flow is relaxation enhancing if and only if the Liouville operator $\cA = v \cdot \nabla$ has no eigenfunctions in the Sobolev space $H^{1}$. An equivalent formulation of this result is that an incompressible flow is relaxation enhancing if and only if it is weakly mixing. Examples of relaxation enhancing flows are given in~\cite{constantin2008diffusion}. This problem was studied further in~\cite{franke2010behavior}, where it also mentioned that there are very few examples of relaxation enhancing flows. In \cite{franke2010behavior} it is shown that the spectral gap of the advection-diffusion operator $\cL = \alpha v \cdot \nabla + \Delta$, for a  divergence-free drift $v$ and $\alpha \in \mathbb{R}$, remains bounded above in the limit as $\alpha \rightarrow \pm\infty$ by a negative constant if and only if the advection operator has an eigenfunction in $H^{1}$.  The results are reminiscent of the results we mention above on the necessary and sufficient condition to obtain a reduction of asymptotic variance in the limit $\alpha\rightarrow \pm \infty$.
\\\\
Our analysis of the asymptotic variance $\sigma^2_f$, based on the careful study of the Poisson equation~\eqref{eq:poisson}, enables us to study in detail the problem of finding the nonreversible perturbation giving rise to minimum asymptotic variance for diffusions with linear drift, i.e. diffusions whose invariant measure is Gaussian, over a large class of observables. Diffusions with linear drift were considered in \cite{lelievre2013optimal} where the optimal nonreversible perturbation with respect to accelerating convergence was obtained.  For linear and quadratic observables, we can give a complete solution to this problem, and construct nonreversible perturbations that provide a dramatic reduction in asymptotic variance.  Moreover, we demonstrate that the conditions under which the variance is reduced are very different from those of maximising the spectral gap discussed in \cite{lelievre2013optimal}. In particular, we show how a nonreversible perturbation can dramatically reduce the asymptotic variance for the estimator $\pi_T(f)$, even though no such improvement can be made on the rate of convergence to equilibrium.
\\\\
Guided by our theoretical results, we can then study numerically the reduction in the asymptotic variance due to the addition of a nonreversible drift for some toy models from molecular dynamics. In particular, we study the problem of computing expectations of observables with respect to a warped Gaussian \cite{haario1999adaptive} in two dimensions, as well as a simple model for a dimer in a solvent~\cite{lelievre2010free}. The numerical experiments reported in this paper illustrate that a judicious choice of the nonreversible perturbation, dependent on the target distribution and the observable,  can dramatically reduce the asymptotic variance.
\\\\
To compute $\pi_T(f)$ numerically, we use an Euler-Maruyama discretisation of (\ref{e:nonreversible}).  The resulting discretisation error introduces an additional bias in the estimator for $\pi(f)$, see \cite{mattingly2010convergence} for a comprehensive error analysis.  This imposes additional constraints on the magnitude of the nonreversible drift, since increasing $\alpha$ arbitrarily will give rise to a large discretisation error which must be controlled by taking smaller timesteps.  A natural question is whether the increase in the computational cost due to the necessity of taking smaller timesteps negates any benefits of the resulting variance reduction. To study this problem, we compare the computational cost of the unadjusted nonreversible Langevin sampler with the corresponding MALA scheme.\footnote{As we illustrate in our paper, there is no point in considering the Metropolis adjusted sampler with a nonreversible proposal, since the addition of the accept-reject step renders the resulting Markov chain reversible and any nonreversibility--induced variance reduction is lost.} Our numerical results, together with the theoretical analysis for diffusions with linear drift, show that the nonreversible Langevin sampler can outperform the MALA algorithm, provided that the nonreversible perturbation is well--chosen.  Finally, we consider a higher order numerical scheme for generating samples of (\ref{e:nonreversible}), based on splitting the reversible and nonreversible dynamics. Numerically, we investigate the properties of this integrator, and demonstrate that its improved stability and discretisation error make it a good numerical scheme for computing the estimator $\pi_T(f)$ using a nonreversible diffusion.
\\\\
The rest of the paper is organized as follows.  In Section \ref{sec:clt} we describe how the central limit theorem (\ref{eq:clt}) arises from the solution of the Poisson equation associated with the generator of the dynamics.  In Section ~\ref{sec:var_anal} we  analyse the asymptotic variance and formulate the problem of finding the optimal perturbation with respect to minimising $\sigma^2_f$, for a fixed observable, or over the space of square-integrable observables.  Moreover, following the analysis described in \cite{pavliotis2010asymptotic}, we derive a spectral representation for the asymptotic variance $\sigma^2_f$, in terms of the discrete spectrum of the operator $\mathcal{G}$ defined in (\ref{eq:operator_G}), and recover estimates for the asymptotic variance for any value of $\alpha$.  In Section \ref{sec:gauss cxx}, we consider the case of Gaussian diffusions, which are amenable to explicit calculation to demonstrate the theory presented in this paper.  In Section \ref{sec:numerics}, we provide various numerical examples to complement the theoretical results.  Finally, in Section \ref{sec:comp_cost} we describe the bias-variance tradeoff for nonreversible Langevin samplers, and explore their computational cost.  Conclusions and discussion on further work are presented in Section \ref{sec:conclusions}.

\section{The Central Limit Theorem and estimates on the asymptotic variance via the Poisson equation}
\label{sec:clt}
In this section we make explicit some sufficient conditions under which the estimator 
$$\pi_T(f) = \frac{1}{T}\int_0^T f(X_s^\gamma)\,ds, $$
where $X_t^\gamma$ denotes the solution of (\ref{e:nonreversible}), satisfies a central limit theorem of the form (\ref{eq:clt}).  The fundamental ingredient for proving such a central limit theorem is establishing the well-posedness of the Poisson equation
\begin{equation}
\label{eq:poisson1}
  -\mathcal{L}\phi(x) = f(x) - \pi({f}),\quad \pi(\phi) = 0,
\end{equation}
for all bounded and continous functions $f:\mathbb{R}^d\rightarrow \mathbb{R}$, where $\cL$ is defined by (\ref{eq:operator_decomposition}), and obtaining estimates on the growth of the unique solution $\phi$.  As described previously, we shall assume that $\pi$ possesses a smooth, strictly positive density, also denoted by $\pi(x)$, such that $\int_{\mathbb{R}^d} \pi(x)\,dx < \infty$ and that the SDE (\ref{e:nonreversible}) has a unique strong solution for all $\alpha \in \mathbb{R}$.   Applying the results detailed in \cite{glynn1996liapounov,meyn1993survey}, we shall assume that the process $X_t^\gamma$ possesses a Lyapunov function, which is sufficient to ensure the exponential ergodicity of $X_t^\gamma$, \cite{mattingly2002ergodicity,meyn1993stability}, as detailed in the following assumption.

\begin{ass}[Foster--Lyapunov Criterion]
\label{ass:lyapunov}
There exists a function $U:\mathbb{R}^d\rightarrow \mathbb{R}$ and constants $c > 0$ and $b \in \mathbb{R}$ such that $\pi(U) < \infty$ and 
\begin{equation}
\label{eq:lyapunov_condition}
  \mathcal{L} U(x) \leq -c U(x) + b \mathbf{1}_C, \mbox{ and } U(x) \geq 1, \quad x \in \mathbb{R}^d,
\end{equation}
where $\mathbf{1}_C$ is the indicator function over a \emph{petite set}. 
\end{ass}
 For the definition of a petite set we refer the reader to  \cite{meyn1993stability}.
For the generator $\mathcal{L}$ corresponding to the process (\ref{e:nonreversible}), compact sets are always petite. {As noted in \cite{roberts1996exponential}, a sufficient condition on $\pi$  for (\ref{e:nonreversible}) to possess a Lyapunov function is the following.}

\begin{ass}
\label{ass:drift_condition}
{The density $\pi$ is bounded  and} for some $0 < \beta < 1$:
\begin{equation}
\label{eq:lyapunov_condition_nonrev}
\liminf_{|x|\rightarrow \infty} \left((1 - \beta)|\nabla \log \pi(x)|^2 + \Delta \log \pi(x)\right) > 0.
\end{equation}
\end{ass}

\begin{lemma}{{\cite[Theorem 3.3]{roberts1996exponential}}}
Suppose that Assumption \ref{ass:drift_condition} holds then the Foster--Lyapunov criterion  holds for (\ref{eq:sde1}) with 
$$
  U(x) = \pi^{-\beta}(x). $$   Moreover, if the nonreversible term is of the form $\gamma(x) = J\nabla \log\pi(x)$, for $J \in \mathbb{R}^{d\times d}$ antisymmetric, then this choice of $U$ is a Lyapunov function for $X_t^\gamma$ defined by (\ref{e:nonreversible})  for all $\alpha \in \mathbb{R}$.
\end{lemma}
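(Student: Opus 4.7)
The plan is to verify the Foster–Lyapunov inequality for $U(x) = \pi^{-\beta}(x)$ by direct computation, then observe that the nonreversible correction term $\alpha \gamma \cdot \nabla U$ vanishes pointwise whenever $\gamma = J \nabla \log \pi$ with $J$ antisymmetric. The first step is a standard calculation carried out in Roberts–Tweedie; I would essentially reproduce it to keep the argument self-contained. Since $\nabla U = -\beta U \nabla \log \pi$ and
\begin{equation*}
\Delta U = U\bigl(\beta^2 |\nabla \log \pi|^2 - \beta \Delta \log \pi\bigr),
\end{equation*}
a direct expansion of $\mathcal{S} U = \nabla \log \pi \cdot \nabla U + \Delta U$ gives
\begin{equation*}
\mathcal{S} U(x) = -\beta U(x)\Bigl[(1-\beta)|\nabla \log \pi(x)|^2 + \Delta \log \pi(x)\Bigr].
\end{equation*}

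Next I would invoke Assumption~\ref{ass:drift_condition}: the quantity in brackets has positive liminf at infinity, so there exist $R, \kappa > 0$ such that $\mathcal{S}U(x) \leq -\kappa U(x)$ for $|x| \geq R$. On the compact ball $C = \{|x| \leq R\}$, $U$ and $\mathcal{S}U$ are bounded (using smoothness and strict positivity of $\pi$), so one can absorb the remainder into a term of the form $b \mathbf{1}_C$, yielding $\mathcal{S} U \leq -cU + b\mathbf{1}_C$ for some constants $c, b > 0$. The condition $U \geq 1$ follows from boundedness of $\pi$: after a constant rescaling (which preserves the Lyapunov inequality) we may assume $\pi \leq 1$, so $U = \pi^{-\beta} \geq 1$. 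Since compact sets are petite for the nondegenerate elliptic diffusion~\eqref{eq:sde1}, Assumption~\ref{ass:lyapunov} is verified for the reversible dynamics.

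For the second assertion, the key observation is that adding the drift $\alpha J \nabla \log \pi$ contributes
\begin{equation*}
\alpha\, \gamma(x) \cdot \nabla U(x) = -\alpha \beta U(x)\, \bigl(\nabla \log \pi(x)\bigr)^{\!\top} J\, \nabla \log \pi(x) = 0,
\end{equation*}
since $J$ is antisymmetric so the quadratic form $v^\top J v$ vanishes for every $v$. Consequently $\mathcal{L} U = \mathcal{S} U$ pointwise, and the Lyapunov bound derived above holds unchanged for every $\alpha \in \mathbb{R}$, with the same constants $c, b$ independent of $\alpha$.

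The proof is essentially mechanical, so the only point requiring care is the justification of the petite/compact set equivalence and the mild normalisation ensuring $U \geq 1$; I would state these explicitly, referring to \cite{meyn1993stability,roberts1996exponential} for the standard facts about petite sets of nondegenerate diffusions. No part of the argument presents a genuine analytic obstacle; the value of the statement lies in the algebraic cancellation due to antisymmetry of $J$, which guarantees that the Lyapunov function is uniform in $\alpha$ — a feature that will be used later when passing to the limit $|\alpha|\to\infty$ in the variance analysis.
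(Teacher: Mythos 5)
Your proposal is correct and follows essentially the same route as the paper: the same direct computation of $\mathcal{S}U$ for $U=\pi^{-\beta}$, the same use of Assumption~\ref{ass:drift_condition} to obtain the drift condition outside a compact (petite) set, the same rescaling argument for $U\geq 1$ using boundedness of $\pi$, and the same algebraic cancellation $v^\top J v=0$ to handle the nonreversible term uniformly in $\alpha$. No discrepancies to note.
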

\begin{proof}
For this choice of $\gamma$ the generator of (\ref{e:nonreversible}) has the form
$$
  \cL = (I + \alpha J)\nabla \log\pi(x)\cdot \nabla + \Delta, \quad \alpha \in \mathbb{R},
$$
For $U(x) = \pi^{-\beta}(x)$ we obtain:
\begin{align*}
  \cL U(x) &= -\beta \pi^{-\beta}(x)\nabla \log\pi(x) \cdot(I + \alpha J) \nabla \log \pi(x)  - \beta\nabla\cdot\left(\pi^{-\beta}(x)\nabla\log\pi(x)\right) \\
          &= -\beta\left[\left(1 - \beta\right)\norm{\nabla \log\pi (x)}^2 + \Delta \log\pi(x)\right]\pi^{-\beta}(x).
\end{align*}
Thus, by assumption (\ref{eq:lyapunov_condition_nonrev}), {there exists $\epsilon > 0$ and $M$ such that for $|x| > M$}:
$$
  \left(1 - \beta\right)\norm{\nabla \log\pi (x)}^2 + \Delta \log\pi(x) > \epsilon,
$$
and so
$$
  \cL U(x) \leq -\beta\epsilon U(x) + b\mathbf{1}_{C_M},
$$
where $C_M = \lbrace x \in \mathbb{R}^d \, : \, |x| \leq M \rbrace$ and $b$ is a positive constant.
\\\\
Finally, we note that since $\pi$ is bounded, then $U$ is bounded above from zero uniformly.  Thus, $U$ can be rescaled to satisfy the condition $U \geq 1$, as is required by the Foster--Lyapunov criterion.
\qed
\end{proof}

\begin{remark}
Note that Assumption \ref{ass:drift_condition} holds trivially when $\pi$ is a Gaussian distribution in $\mathbb{R}^d$.  {More generally, following \cite[Example 1]{roberts2002langevin}, consider $\pi(x) \propto \exp(-p(x))$, where $p(x)$ is a polynomial of order $m$ such that $p(x) \rightarrow \infty$ as $|x|\rightarrow \infty$ (necessarily $m \geq 2$ and $m$ is even).  Clearly
$$
 \liminf_{|x|\rightarrow\infty}\frac{|\nabla\log\pi(x)|^2}{|\Delta \log\pi(x)|} = \infty,
$$
and
$$
\liminf_{|x|\rightarrow \infty}(1-\beta)\left|\nabla \log\pi(x)\right|^2 > 0,
$$
so that (\ref{eq:lyapunov_condition}) holds.  On the other hand, consider the case when when $\pi$ is a Student's t-distribution, $\pi(x) \propto (1 + x^2/\nu)^{-\frac{\nu+1}{2}}$ with $\nu \geq 2$. Then it is straightforward to check that (\ref{eq:lyapunov_condition}) will not hold.  Indeed, since $|\nabla \log \pi(x)| \rightarrow 0$, by \cite[Theorem 2.4]{roberts1996exponential} this process is not exponentially ergodic.}
\end{remark}

If condition (\ref{eq:lyapunov_condition}) holds for $X_t^\gamma$, then the process will be exponentially ergodic.  More specifically, the law of the process $X_t^\gamma$ started from a point $x \in \mathbb{R}^d$ will converge exponentially fast in the total variation norm to the equilibrium distribution $\pi$.  In particular, denoting by $(P_t^\gamma)_{t\geq 0}$ the semigroup associated with the diffusion process (\ref{e:nonreversible}), we have the following result.

\begin{theorem}{\cite[Thm 8.3]{cattiaux2012central}, \cite[Thm 3.10, 3.12]{douc2009subgeometric}, \cite[Thm 5.2.c]{down1995exponential}}\\
\label{thm:geometric_ergodicity}
Suppose that Assumption \ref{ass:lyapunov} holds for (\ref{e:nonreversible}), with Lyapunov function $U$. Then there exist positive constants $c$ and $\lambda$ such that, for all $x$,
$$
  \Norm{p^\gamma_t(x, \cdot) - \pi}_{TV} \leq cU(x)e^{-\lambda t},
$$
where $p^\gamma_t(x, \cdot)$ denotes the law of $X_t^\gamma$ given $X_0^\gamma = x$ and $\Norm{\cdot}_{TV}$ denotes the total variation norm.   In particular, for any probability measure $\nu$ such that $U \in L^1(\nu)$, 
$$
  \lim_{t\rightarrow +\infty}\Norm{(P_t^\gamma)^*\nu - \pi}_{TV} = 0,
$$
{where $(P_t^\gamma)^*$ denotes the $L^2(\mathbb{R}^d)$ adjoint of $P_t^\gamma$.}
\end{theorem}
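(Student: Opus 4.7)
The plan is to invoke the cited results from Meyn--Tweedie type theory directly, after checking that their hypotheses are satisfied by $X_t^\gamma$. Under our standing assumptions, (\ref{e:nonreversible}) admits a unique non-explosive strong solution, so $(X_t^\gamma)_{t\geq 0}$ is a strong Markov process with continuous sample paths whose generator $\mathcal{L}$ is the uniformly elliptic operator given in (\ref{eq:operator_decomposition}). The Foster--Lyapunov drift inequality (\ref{eq:lyapunov_condition}) is available by assumption.

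First, I would verify that all compact sets are petite for the skeleton chain $X_{n\Delta}^\gamma$, so that the indicator $\mathbf{1}_C$ appearing in (\ref{eq:lyapunov_condition}) is indeed of the required form. This is standard for non-degenerate diffusions: since $\mathcal{L}$ has the constant, non-degenerate diffusion coefficient $\sqrt{2}I$ and smooth drift, the transition kernel $p_t^\gamma(x,\cdot)$ has a smooth, everywhere strictly positive density with respect to Lebesgue measure (e.g.\ by H\"ormander / parabolic regularity), which gives irreducibility with respect to Lebesgue measure and implies that every compact set is small, hence petite. Combined with the continuity of sample paths, this puts us in the setting of \cite{down1995exponential,douc2009subgeometric,cattiaux2012central}.

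Next, I would apply the cited theorems to conclude the $V$-uniform exponential ergodicity
\begin{equation*}
 \Norm{p_t^\gamma(x,\cdot)-\pi}_{TV} \leq c\,U(x)\,e^{-\lambda t},
\end{equation*}
with constants $c,\lambda>0$ independent of $x$. These references give exactly this statement given the drift condition (\ref{eq:lyapunov_condition}) with a petite set and a $T$-process (or equivalently non-degenerate diffusion) structure, so the invocation is essentially a citation check.

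Finally, for the second assertion, I would integrate the pointwise bound against $\nu$:
\begin{equation*}
 \Norm{(P_t^\gamma)^*\nu - \pi}_{TV}
 \;\leq\; \int \Norm{p_t^\gamma(x,\cdot)-\pi}_{TV}\,\nu(dx)
 \;\leq\; c\,e^{-\lambda t}\int U(x)\,\nu(dx),
\end{equation*}
and the right-hand side tends to $0$ as $t\to\infty$ precisely because $U\in L^1(\nu)$. The inequality between the two norms follows from the duality between $P_t^\gamma$ and $(P_t^\gamma)^*$ on bounded measurable test functions. I anticipate that the only genuinely delicate step is the petite-set/irreducibility verification needed to match the hypotheses of \cite{down1995exponential} in the continuous-time setting; the rest is bookkeeping.
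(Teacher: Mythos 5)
Your proposal matches the paper's treatment: the paper states this result purely as a citation of \cite{down1995exponential,douc2009subgeometric,cattiaux2012central}, having already noted just before Assumption \ref{ass:lyapunov} that compact sets are petite for the non-degenerate generator $\mathcal{L}$, which is exactly the hypothesis-check you identify as the only delicate step. The integration of the pointwise bound against $\nu$ for the second assertion is the same routine argument the paper leaves implicit, so your write-up is correct and essentially identical in approach.
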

\qed

{For a central limit theorem to hold for the process $X_t^\gamma$, and thus for $\sigma^2_f$ to be finite,  it is necessary that the Poisson equation (\ref{eq:poisson}) is well-posed.  The Foster-Lyapunov condition (\ref{eq:lyapunov_condition}) is sufficient for this to hold.}

\begin{theorem}{\cite[Thm 3.2]{glynn1996liapounov}}
  Suppose that Assumption \ref{ass:lyapunov} holds for the diffusion process (\ref{e:nonreversible}) with Lyapunov function $U$.  Then there exists a positive constant $c$ such that for any $|f|^2 \leq U$,  the Poisson equation (\ref{eq:poisson}) admits a unique zero mean solution $\phi$ satisfying the bound $\norm{\phi(x)}^2 \leq cU(x)$.  In particular, $\phi \in L^2(\pi)$.
\end{theorem}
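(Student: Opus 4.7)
The natural strategy is to construct $\phi$ via the integrated--semigroup formula
\begin{equation*}
\phi(x) \;=\; \int_0^\infty \bigl(P_t^\gamma f(x) - \pi(f)\bigr)\,dt,
\end{equation*}
and then (a) show this integral converges absolutely with the quantitative bound $|\phi(x)|^2 \leq c\,U(x)$, (b) verify it solves the Poisson equation in the sense stated, and (c) establish uniqueness. The first step is the main analytic work; the other two are formal once the decay estimates are in hand.

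First, I would upgrade the Foster--Lyapunov condition for $U$ to one for the square root $V := \sqrt{U}$. A direct computation with the diffusion generator gives
\begin{equation*}
\mathcal{L}V \;=\; \frac{1}{2\sqrt{U}}\,\mathcal{L}U \;-\; \frac{|\nabla U|^2}{4\,U^{3/2}} \;\leq\; \frac{1}{2V}\,\mathcal{L}U,
\end{equation*}
where the It\^o correction is nonpositive by concavity of $\sqrt{\,\cdot\,}$. Combined with Assumption \ref{ass:lyapunov}, this yields $\mathcal{L}V \leq -\tilde c\, V + \tilde b\,\mathbf{1}_{\tilde C}$ on a compact (hence petite) set $\tilde C$. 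Having $V$ as a Lyapunov function, I invoke $V$--uniform ergodicity (Down--Meyn--Tweedie, or Theorem \ref{thm:geometric_ergodicity} applied with $V$ in place of $U$) to obtain constants $K,\lambda>0$ such that, for every measurable $g$ with $|g| \leq V$,
\begin{equation*}
\bigl|P_t^\gamma g(x) - \pi(g)\bigr| \;\leq\; K\,V(x)\,e^{-\lambda t}.
\end{equation*}
Since $|f|^2 \leq U$ is the same as $|f| \leq V$, applying this with $g=f$ and integrating in $t$ shows that the defining integral for $\phi$ converges absolutely with $|\phi(x)| \leq (K/\lambda)\,V(x)$, i.e. $|\phi(x)|^2 \leq c\,U(x)$. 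Fubini together with $\pi(P_t^\gamma f) = \pi(f)$ gives $\pi(\phi)=0$, and since $\pi(U)<\infty$ we conclude $\phi \in L^2(\pi)$.

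Next, to verify the Poisson equation, I would use the Kolmogorov identity $P_T^\gamma f - f = \int_0^T \mathcal{L} P_s^\gamma f\,ds$, rewrite it as $\int_0^T \mathcal{L}(P_s^\gamma f - \pi(f))\,ds = P_T^\gamma f - \pi(f) - (f - \pi(f))$, and pass to the limit $T\to\infty$ using the exponential decay derived above (together with a standard density/closure argument for $\mathcal{L}$ on functions bounded by $V$). This yields $-\mathcal{L}\phi = f - \pi(f)$. Uniqueness follows because any two zero--mean solutions differ by a harmonic function $\psi \in L^2(\pi)$ satisfying $\mathcal{L}\psi=0$; by ergodicity of $X_t^\gamma$ (Theorem \ref{thm:geometric_ergodicity}) the only such $\psi$ is the zero function.

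The hard part is the first step: showing that the concavity correction $-|\nabla U|^2/(4U^{3/2})$ does not interfere with the drift condition at infinity for $V=\sqrt{U}$, and producing a clean $V$--uniform ergodicity estimate rather than only the total--variation bound of Theorem \ref{thm:geometric_ergodicity}. For the canonical choice $U = \pi^{-\beta}$ this is tractable because the correction term is explicitly controlled by $|\nabla\log\pi|^2 \cdot U$, and Assumption \ref{ass:drift_condition} already constrains $|\nabla\log\pi|^2$ at infinity; the alternative route, which bypasses the explicit $\sqrt{U}$ calculation, is to cite the general framework of Glynn--Meyn that builds $f$--norm ergodicity for $|f|^p \leq U$ directly from the $U$--drift condition.
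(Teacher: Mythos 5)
The paper does not prove this statement at all: it is quoted verbatim as Theorem 3.2 of \cite{glynn1996liapounov} and stated without proof. Your proposal correctly reconstructs the standard argument behind that citation --- pass from the drift condition for $U$ to one for $\sqrt{U}$ via the concavity of the square root, invoke $\sqrt{U}$-uniform (geometric) ergodicity to get $|P_t^\gamma f - \pi(f)| \leq K\sqrt{U(x)}\,e^{-\lambda t}$ for $|f|\le\sqrt{U}$, define $\phi$ as the integrated semigroup, and conclude existence, the quantitative bound, and uniqueness --- which is precisely the route taken in the cited reference, so there is nothing to fault beyond the technical domain issues you already flag.
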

\qed

The technique of using a Poisson equation to obtain a central limit theorem for an additive functional of a Markov process is widely known, \cite{pardoux2001poisson,bhattacharya1982functional,bhattacharya1985central}.  The approach is based on the fact that, at least formally, we can decompose $\pi_t(f) - \pi(f)$ into a martingale and a ``remainder'' term:
\begin{align*}
  \pi_t(f) - \pi(f) = \frac{1}{t}\int_0^t f(X_s^\gamma)\,ds - \pi(f) &= \frac{\phi(X_0^\gamma) - \phi(X_t^\gamma)}{t} + \frac{\sqrt{2}}{t}\int_0^t \nabla\phi(X_s^\gamma)\cdot dW_s \\
      &=: R_t + M_t.
\end{align*}
Considering the rescaling $\sqrt{t}\left(\pi_t(f) - \pi(f)\right)$, the martingale term $\sqrt{t}M_t$ will converge in distribution to a Gaussian random variable with mean $0$ and variance 
$$
  \sigma^2_f = 2\int_{\mathbb{R}^d} |\nabla \phi(x)|^2\,\pi(dx),
$$
by the central limit theorem for martingales \cite{helland1982central}.  It remains to control the remainder term $\sqrt{t}R_t$.  We distinguish between two cases:  If $X^\gamma_0 \sim \pi$, then since $\phi \in L^2(\pi)$, $\sqrt{t}R_t$ converges to $0$ in $L^2(\pi)$ and the result follows.  In the more general case we must resort to a ``propagation of chaos'' argument (c.f. \cite[Section 8]{cattiaux2012central}), i.e. apply Theorem \ref{thm:geometric_ergodicity} to show that
\begin{equation*}
  \mathbb{E}_{X_0 = x}\left[H\left(\frac{1}{\sqrt{t}}\int_{r}^{t+r}f(X_s^\gamma) - \pi(f)\,ds\right)\right] - \mathbb{E}_{X_0\sim \pi}\left[H\left(\frac{1}{\sqrt{t}}\int_{0}^{t}f(X_s^\gamma) - \pi(f)\,ds\right)\right] \rightarrow 0,
\end{equation*}
as $r \rightarrow \infty$, for all continuous bounded functions $H$.  The result then follows by decomposing
$$
  \pi_{t+r}(f) - \pi(f) = \frac{1}{t}\int_0^r f(X_s^\gamma)\,ds + \frac{1}{t}\int_r^{t+r} f(X_s^\gamma)\,ds,
$$
and applying the propagation of chaos argument to the second term. {The conclusion is summarized in the following result, which provides a central limit theorem for $X^\gamma_t$ starting from an arbitrary initial distribution $\nu$.}

\begin{theorem}{\cite[Thm 4.4]{glynn1996liapounov}}
  If Assumption \ref{ass:lyapunov} holds for Lyapunov function $U$, then for any $f$ such that $f^2(x) \leq U(x)$, there exists a constant $0 < \sigma^2_f < \infty$ such that $\sqrt{t}(\pi_t(f) - \pi(f))$ converges in distribution to an $\mathcal{N}(0, \sigma^2_f)$ distribution, as $t \rightarrow \infty$, for any initial distribution $\nu$,  where
  \begin{equation}
  \label{eq:asympt_var}
    \sigma^2_f = 2\int_{\mathbb{R}^d} \norm{\nabla \phi(x)}^2\,\pi(dx).
  \end{equation}
\end{theorem}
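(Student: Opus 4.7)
The plan is to combine the unique solution $\phi$ of the Poisson equation $-\mathcal{L}\phi = f - \pi(f)$ supplied by the previous theorem with an It\^o decomposition and the martingale CLT, controlling the non-stationary initial condition via the exponential ergodicity of Theorem~\ref{thm:geometric_ergodicity}. Since $f^2 \leq U$ and $U \in L^1(\pi)$, the previous theorem furnishes a zero-mean $\phi$ with $|\phi|^2 \leq c U$, so $\phi \in L^2(\pi)$. Testing the Poisson equation against $\phi$ and using the antisymmetry of $\mathcal{A}$ together with the integration-by-parts identity $\int \phi(-\mathcal{S})\phi\,d\pi = \int |\nabla\phi|^2\,d\pi$, one obtains
\begin{equation*}
\int |\nabla\phi|^2\,d\pi \;=\; -\int \phi\,\mathcal{L}\phi\,d\pi \;=\; \int \phi(f-\pi(f))\,d\pi \;<\; \infty,
\end{equation*}
so $\sigma_f^2$ defined by~\eqref{eq:asympt_var} is finite; it is strictly positive whenever $f$ is not $\pi$-a.s.\ constant, for then $\phi$ is non-constant on $\mathrm{supp}\,\pi$.

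Applying It\^o's formula to $\phi(X_t^\gamma)$ and using $\mathcal{L}\phi = -(f - \pi(f))$ produces the decomposition announced in the preamble, $\pi_t(f) - \pi(f) = R_t + M_t$, with
\begin{equation*}
R_t = \frac{\phi(X_0^\gamma) - \phi(X_t^\gamma)}{t}, \qquad M_t = \frac{\sqrt{2}}{t}\int_0^t \nabla\phi(X_s^\gamma)\cdot dW_s.
\end{equation*}
The quadratic variation of the rescaled martingale $\sqrt{t}\,M_t$ equals $\frac{2}{t}\int_0^t |\nabla\phi(X_s^\gamma)|^2\,ds$, which by Birkhoff's ergodic theorem (applicable under Assumption~\ref{ass:lyapunov}, since $|\nabla\phi|^2 \in L^1(\pi)$) converges almost surely to $\sigma_f^2$. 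Helland's martingale CLT \cite{helland1982central} then yields $\sqrt{t}\,M_t \Rightarrow \mathcal{N}(0, \sigma_f^2)$.

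The main obstacle is to show that $\sqrt{t}\,R_t \to 0$ in probability for an arbitrary initial distribution $\nu$; outside of stationarity one cannot a priori bound $\mathbb{E}|\phi(X_t^\gamma)|^2$ by $\|\phi\|_{L^2(\pi)}^2$. The stationary case $X_0^\gamma \sim \pi$ is immediate since $\mathbb{E}_\pi|\phi(X_t^\gamma)|^2 = \|\phi\|_{L^2(\pi)}^2$ for every $t$, whence $\sqrt{t}\,R_t \to 0$ in $L^2(\mathbb{P}_\pi)$. For general $\nu$ I would follow the propagation-of-chaos argument of~\cite[Sec.~8]{cattiaux2012central}: for a bounded continuous functional $H$ and a burn-in time $r > 0$, the Markov property together with the total-variation estimate $\|p_r^\gamma(x,\cdot) - \pi\|_{TV} \le cU(x)e^{-\lambda r}$ of Theorem~\ref{thm:geometric_ergodicity} allows one to compare
\begin{equation*}
\mathbb{E}_\nu\!\left[H\!\left(\tfrac{1}{\sqrt{t}}\int_r^{t+r}(f(X_s^\gamma)-\pi(f))\,ds\right)\right] \quad\text{with}\quad \mathbb{E}_\pi\!\left[H\!\left(\tfrac{1}{\sqrt{t}}\int_0^{t}(f(X_s^\gamma)-\pi(f))\,ds\right)\right],
\end{equation*}
so that the difference tends to $0$ as $r\to\infty$, uniformly in $t$. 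Since the burn-in chunk $\tfrac{1}{\sqrt{t+r}}\int_0^r(f-\pi(f))\,ds$ is of order $r/\sqrt{t+r} = o_p(1)$ as $t\to\infty$ for fixed $r$, sending $t\to\infty$ first and then $r\to\infty$ transfers the stationary CLT to $\nu$, and Slutsky's lemma concludes the proof.
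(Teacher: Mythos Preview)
Your proposal is correct and follows essentially the same approach as the paper: the It\^o/Poisson-equation decomposition $\pi_t(f)-\pi(f)=R_t+M_t$, the martingale CLT for $\sqrt{t}\,M_t$, the $L^2(\pi)$ control of $\sqrt{t}\,R_t$ under stationarity, and the propagation-of-chaos transfer via Theorem~\ref{thm:geometric_ergodicity} for general initial laws are precisely the steps the paper sketches before stating the theorem (which it otherwise attributes to~\cite{glynn1996liapounov}). You have simply fleshed out the details---the finiteness of $\sigma_f^2$ via testing the Poisson equation, and the ergodic-theorem convergence of the quadratic variation---that the paper leaves implicit.
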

\qed
{In the remainder of this paper we shall study the dependence of $\phi$, and thus $\sigma^2_f$ on the choice of non-reversible perturbation $\gamma$.  We note that (\ref{eq:asympt_var}) is precisely the Dirichlet form associated with the dynamics $\mathcal{L}$ evaluated at the solution $\phi$ of the Poisson equation (\ref{eq:poisson1}).}

\section{Analysis of the Asymptotic Variance}
\label{sec:var_anal}

\subsection{Mathematical Setting}
We present a few definitions from \cite{komorowski2012fluctuations} that will be useful in the sequel.   Let $L^2_0(\pi)$ denote the set of $L^2(\pi)$-integrable functions with zero mean, with corresponding inner product $\langle \cdot, \cdot\rangle_{\pi}$ and norm $\Norm{\cdot}_{\pi}$.  Consider the operator $\cS$ given by (\ref{eq:generator_rev}) densely defined on $L^2_0(\pi)$.  For $k \in \mathbb{N}$, given the family of seminorms
\begin{equation}\label{e:seminorm}
\|f \|_k^2 := \langle f, (-\cS)^k f \rangle_{\pi},
\end{equation}
we define the function spaces 
$$\mathcal{H}^k:= \left\{f \in L^2_0(\pi) \, : \, \|f \|_k < +\infty \right\}.$$  It follows that $\Norm{\cdot}_k$ is a norm on $\cH^k$.  For $k \geq 1$, the norm $\| \cdot \|_k$ satisfies the parallelogram identity and, consequently, the completion of $\cH^k$ with respect to the norm $\| \cdot \|_k$, which is denoted by $\cH^k$, is a Hilbert space. The inner product $\langle
\cdot , \cdot \rangle_k$ in $\cH^k$   is defined through polarization. It is easy to check that, for $f, \, g \in \mathcal{D}(\cL)$,
\begin{equation}\label{e:h1_defn}
\langle f,g \rangle_{1} = \langle f, (-\cS) g \rangle_{\pi} = \langle \nabla f , \nabla g \rangle_{\pi}.
\end{equation}
We note that $\cH^{0} = L_0^2(\pi)$.   A careful analysis of the function space $\cH^k$ is presented in~\cite{komorowski2012fluctuations}. 
\\\\
The operator $\cS$ is symmetric with respect to $\pi$ and can be extended to a selfadjoint operator on $L^2_0(\pi)$, which is also denoted by $\cS$, with domain $\mathcal{D}(\cS) = \cH^2$. We shall make the following assumption on $\pi$ which is required, in addition to Assumption \ref{ass:drift_condition}, to ensure that $\mathcal{L}$ possesses a spectral gap in $L^2_0(\pi)$.
\begin{ass}
\label{ass:logpi}
 \begin{equation}
 \label{eq:logpi}
 \lim_{|x|\rightarrow +\infty}\norm{\nabla\log \pi(x)} = \infty.
 \end{equation}
\end{ass}

In the following lemma we establish a number of fundamental properties relating to the spectrum of $\cL$.  Using these results, we then establish the well-posedness of the Poisson equation (\ref{eq:poisson1}) for any $f \in L^2_0(\pi)$.

\begin{lemma}
\label{lemm:function_space_well_posedness}
Suppose that Assumptions \ref{ass:drift_condition} and \ref{ass:logpi} hold.  Then the embedding $\cH^1 \subset L^2_0(\pi)$ is compact.  For all $\alpha \in \mathbb{R}$, the operator $\mathcal{L}$ satisfies the following Poincar\'{e} inequality in $L^2_0(\pi)$:
\begin{equation}
\label{eq:poincare}
  \lambda \Norm{g}_\pi^2 \leq \langle g, (-\cL)g\rangle_{\pi},   \quad g \in \cH^1,
\end{equation}
{where $\lambda$ is a positive constant, independent of the nonreversible perturbation}.  Moreover,  for all $f \in L^2_{0}(\pi)$ there exists a unique $\phi \in \cH^1$ such that $-\cL \phi = f$.
\end{lemma}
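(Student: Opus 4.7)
The plan has three parts matching the three claims. The key idea is that once we establish the compact embedding $\cH^1 \subset L^2_0(\pi)$, the Poincaré inequality follows from the spectral gap of the reversible generator $\cS$ together with the antisymmetry of $\cA$, and the Poisson equation is then solved by Lax-Milgram.

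For the compact embedding, the approach is the ground-state transformation. Writing $\pi = e^{-V}$ and setting $u = e^{-V/2}g$, one checks via integration by parts that $\|g\|_1^2 = \int |\nabla u|^2 + W u^2\,dx$ where $W(x) = \tfrac{1}{4}|\nabla \log\pi(x)|^2 + \tfrac{1}{2}\Delta \log \pi(x)$. This unitarily identifies the self-adjoint extension of $-\cS$ on $L^2_0(\pi)$ with the Schr\"odinger operator $H = -\Delta + W$ on a subspace of $L^2(dx)$. Combining Assumption \ref{ass:drift_condition} (which controls $\Delta \log\pi$ from below by a negative multiple of $|\nabla \log\pi|^2$) with Assumption \ref{ass:logpi} ($|\nabla \log \pi|\to\infty$), one obtains $W(x)\to +\infty$ as $|x|\to\infty$. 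By Persson's theorem, the essential spectrum of $H$ is then empty, so $H$ (and hence $-\cS$) has compact resolvent, and the form domain embeds compactly in $L^2_0(\pi)$.

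For the Poincar\'e inequality, the compact resolvent means $-\cS$ has purely discrete spectrum on $L^2_0(\pi)$ consisting of eigenvalues $0 < \lambda_1 \leq \lambda_2 \leq \dots$ tending to infinity. In particular the spectral gap $\lambda = \lambda_1 > 0$ gives $\langle g, (-\cS) g\rangle_\pi \geq \lambda \|g\|_\pi^2$ on $\cH^1$. The crucial observation is that $\cA = \gamma\cdot\nabla$ is antisymmetric in $L^2(\pi)$: for smooth compactly supported $g$,
\begin{equation*}
\langle g, \cA g\rangle_\pi = \int g\,\gamma\cdot\nabla g\,\pi\,dx = \tfrac{1}{2}\int \gamma\cdot\nabla(g^2)\,\pi\,dx = -\tfrac{1}{2}\int g^2\,\nabla\cdot(\gamma\pi)\,dx = 0,
\end{equation*}
using \eqref{e:divergence_free}, and this extends to $\cH^1$ by density. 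Therefore $\langle g, (-\cL) g\rangle_\pi = \langle g, (-\cS)g\rangle_\pi = \|g\|_1^2 \geq \lambda \|g\|_\pi^2$, with $\lambda$ depending only on $\cS$ (hence on $\pi$) and not on $\alpha$ or $\gamma$.

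For the Poisson equation, the plan is to apply the Lax-Milgram (Lions) theorem to the bilinear form $a(\phi,\psi) := \langle \nabla\phi, \nabla\psi\rangle_\pi + \alpha \int \phi\,\gamma\cdot\nabla\psi\,\pi\,dx$ on $\cH^1 \times \cH^1$, which is the weak form of $-\cL \phi = f$ after integration by parts (using the antisymmetry of $\cA$ to move the derivative onto $\psi$). Coercivity is immediate, since $a(\phi,\phi) = \|\phi\|_1^2$ by the computation above, and the right-hand side $\psi \mapsto \langle f,\psi\rangle_\pi$ is continuous on $\cH^1$ by Cauchy-Schwarz and the Poincar\'e inequality. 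Thus existence and uniqueness of $\phi \in \cH^1$ follow at once.

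The main technical obstacle is twofold: first, verifying that the potential $W$ in the ground-state transformation genuinely diverges at infinity from the two assumptions imposed (one may need to invoke Assumption \ref{ass:drift_condition} with $\beta$ sufficiently close to $1$, or refine the argument); and second, establishing continuity of $a(\cdot,\cdot)$ on $\cH^1$, which amounts to controlling $\int \phi^2|\gamma|^2\,\pi\,dx$ by $\|\phi\|_1^2$. This will require either a boundedness or growth assumption on $\gamma$ (e.g.\ $|\gamma|\in L^\infty$, or at most polynomial growth balanced by the decay of $\pi$), and should be treated as a mild implicit hypothesis on the perturbation.
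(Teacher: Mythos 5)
Your architecture is sound, and two of the three steps track the paper closely: the Poincar\'e inequality is obtained exactly as in the paper (compact embedding $\Rightarrow$ spectral gap for $-\cS$, then $\langle g,\cA g\rangle_\pi=0$ from $\nabla\cdot(\gamma\pi)=0$, so $\lambda$ is independent of $\alpha$ and $\gamma$). For the Poisson equation you take a genuinely different route: the paper defines $\phi=\int_0^\infty P_s^\gamma f\,ds$, uses the exponential $L^2_0(\pi)$-decay of the semigroup (itself a consequence of the Poincar\'e inequality) to show $\phi\in L^2_0(\pi)$, and then reads off $\Norm{\phi}_1^2=\langle f,\phi\rangle_\pi\le C\Norm{f}_\pi^2$; your Lax--Milgram argument is a legitimate alternative and delivers uniqueness more transparently, at the price of needing boundedness of the bilinear form, hence $\gamma\in L^\infty$. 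The paper does impose exactly this (Assumption \ref{ass:smooth_bounded_gamma_ass}) but only \emph{after} the lemma, whereas the semigroup construction avoids it; so your route quietly strengthens the hypotheses, which you correctly flag.

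The genuine gap is in the compact-embedding step. The ground-state transformation and Persson's criterion are a reasonable strategy, but the claim that $W=\tfrac14|\nabla\log\pi|^2+\tfrac12\Delta\log\pi\to+\infty$ does not follow from Assumptions \ref{ass:drift_condition} and \ref{ass:logpi} by the estimate you have in mind. Assumption \ref{ass:drift_condition} gives, outside a compact set, $\Delta\log\pi\ge-(1-\beta)|\nabla\log\pi|^2+\epsilon$, whence $W\ge\tfrac{2\beta-1}{4}|\nabla\log\pi|^2+\tfrac{\epsilon}{2}$. This is useful only when $\beta>\tfrac12$; for $\beta\le\tfrac12$ the coefficient is nonpositive and, since no \emph{upper} bound on $\Delta\log\pi$ is assumed, nothing prevents $W$ from failing to diverge pointwise. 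Your proposed fix --- ``invoke Assumption \ref{ass:drift_condition} with $\beta$ sufficiently close to $1$'' --- is not available: the quantity $(1-\beta)|\nabla\log\pi|^2+\Delta\log\pi$ is decreasing in $\beta$, so validity for some $\beta_0$ implies validity only for \emph{smaller} $\beta$, not larger. The paper sidesteps this entirely by deducing from Assumption \ref{ass:drift_condition} the one-sided bound $-\Delta\log\pi\le(1-\delta)|\nabla\log\pi|^2+M_1$ and citing \cite[Theorem 8.5.3]{lorenzi2006analytical}, which yields the compact embedding for any $\delta\in(0,1)$ when combined with $|\nabla\log\pi|\to\infty$. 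If you want a self-contained argument you would need either the averaged (IMS-localization/Persson) form of the criterion rather than pointwise divergence of $W$, or an additional hypothesis; as written, this step of your proof would fail for $\beta\le\tfrac12$.
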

\begin{proof}
  Assumption \ref{ass:drift_condition} clearly implies that 
 \begin{equation}
 \label{eq:logpi2condition}
  -\Delta \log\pi(x) \leq (1-\delta)\norm{\nabla \log \pi(x)}^2 + M_1,  \quad x\in\mathbb{R}^d,
 \end{equation}
 for some $\delta \in (0,1)$ and $M_1 > 0$.  Applying \cite[Theorem 8.5.3]{lorenzi2006analytical}, relations (\ref{eq:logpi}) and (\ref{eq:logpi2condition}) imply the compactness of the embedding $\cH^1 \subset L^2_0(\pi)$.   As a result,  following \cite[Theorem 8.6.1]{lorenzi2006analytical}, this is sufficient for the Poincar\'{e} inequality to hold for $\cS$, i.e.
 $$
    \lambda \Norm{g}_\pi^2 \leq \Norm{g}^2_{1} = \langle g, (-\cS)g \rangle_{\pi},   \quad g \in \cH^1.
 $$
 from which (\ref{eq:poincare}) follows by the antisymmetry of $\cA$ in $L^2_0(\pi)$.   The existence of this Poincar\'{e} inequality implies that the semigroup $P_t^\gamma$ associated with (\ref{e:nonreversible}) converges exponentially fast to equilibrium, that is, for all $f \in L^2_0(\pi)$:
 \begin{equation}
  \label{eq:exp_decay}
    \Norm{P_t^\gamma f}_{\pi} \leq e^{-\lambda t}\Norm{f}_{\pi},  \quad t \geq 0.
 \end{equation}
Given $f \in L^2_0(\pi)$, define 
\begin{equation}
\label{eq:poisson_semigroup}
  \phi(x) = \int_0^\infty P_s^\gamma f(x)\,ds.
\end{equation}
By (\ref{eq:exp_decay}) it follows that $\phi \in L^2_0(\pi)$ {and from (\ref{eq:poisson_semigroup}) we have  $-\cL\phi  = f$}.  Moreover, we have the bound
$$
  \Norm{\nabla\phi}^2_{\pi} = \langle \phi, (-\cL)\phi \rangle_{\pi} = \langle f, \phi \rangle_{\pi}, 
$$
so that
$$
   \Norm{\phi}_{1} \leq C\Norm{f}_{\pi} < \infty.
$$
\qed
\end{proof}

Throughout this section, we shall assume that Assumptions \ref{ass:drift_condition} and \ref{ass:logpi} hold, and moreover, for simplicity we shall make the following additional assumption: 

\begin{ass}
\label{ass:smooth_bounded_gamma_ass}
The nonreversible perturbation $\gamma$ is smooth and bounded in $L^\infty$.
\end{ass}

We believe that this assumption could be relaxed, see in particular \cite{hwang2014variance} for more general assumptions under which the following results should hold.  We stick here to a simple presentation.  In practice, this assumption is not very stringent.  Indeed, suppose that there exists a smooth function $\psi:\mathbb{R}\rightarrow\mathbb{R}_{\geq 0}$ such that
\begin{equation}
\label{eq:condition_bound}
  \psi(V(x))|\nabla V(x)| \leq 1, \quad \mbox{ for all } x \in \mathbb{R}^d.
\end{equation}
Then $\gamma$ satisfies (\ref{e:divergence_free}) since
\begin{align*}
  \nabla\cdot(\pi \gamma) &= -\frac{1}{Z}\nabla\cdot\left(J\nabla e^{-V} \psi(V)\right) \\
  &= -\frac{1}{Z}\nabla\cdot\left(J\nabla e^{-V}\right) \psi(V) - \frac{1}{Z}\,\psi'(V) \nabla V\cdot J\nabla Ve^{-V}\\ &= 0,
\end{align*}
using the fact that $J$ is antisymmetric.  Moreover, it is clear that $\gamma(x) = J \nabla V(x)\psi(V(x))$ is  bounded and smooth, thus satisfying Assumption \ref{ass:smooth_bounded_gamma_ass}.  In particular, if $V$ is a nonnegative polynomial function then condition (\ref{eq:condition_bound}) is satisfied by choosing $\psi$ to be a smooth non-negative function with compact support.  Likewise, it is easy to satisfy (\ref{eq:condition_bound}) if $V$ has compact level sets.
\\\\
We note that this choice of flow $\gamma$ leaves the potential $V$ invariant, i.e. $V(z_t)$ is constant for all $t \geq 0$, where $\dot{z}_t = \gamma(z_t)$.  Thus, for large $|\alpha|$, the flow $\alpha \gamma$ will result in rapid exploration of the level surfaces of $V$, but the motion of $X_t$ between level surfaces is entirely due to the reversible dynamics of the process.   In particular, for potentials with energy barriers, the transition time for $X_t^\gamma$ to cross a barrier will still satisfy the same Arrenhius law as the corresponding reversible process.  Other choices of flow $\gamma$ are possible.  For example, one could alternatively consider a skew-symmetric matrix function $J(x)$ as detailed in \cite{ma2015complete}.  The corresponding flow would then be defined by
\begin{equation}
\label{eq:J_space_dep}
  \gamma(x) = -J(x)\nabla V(x) + \nabla\cdot J(x), \quad x \in \mathbb{R}^d
\end{equation}
It is straightforward to check that $\nabla\cdot\left(\gamma\pi\right) = 0$,
using the fact that $J(x)$ is skew-symmetric.   If additionally, the matrix function $J$ is smooth with bounded derivative and compact support, then $\gamma$ satisfies Assumption \ref{ass:smooth_bounded_gamma_ass}.  As detailed in \cite{ma2015complete} one can further generalise this choice of dynamics by additionally introducing a space dependent diffusion tensor, and an appropriate correction of the drift to maintain ergodicity with respect to $\pi$.  We do not consider this choice of dynamics in this paper, noting that most of the presented results can be readily generalized to this scenario.
\\\\
Given that Assumption \ref{ass:smooth_bounded_gamma_ass} holds, then we have
$$
  \Norm{\cA u}_{\pi} \leq \Norm{\gamma}_{L^\infty(\mathbb{R}^d)}\Norm{u}_{1} \mbox{ and } \int \gamma(x)\cdot \nabla u(x) \pi(dx) = - \int \nabla\cdot\left(\gamma(x)\cdot   \pi(x)\right) u(x)\,dx = 0,
$$
by (\ref{e:divergence_free}),  so that the operator $\cA:\cH^1 \rightarrow L^2_0(\pi)$ defined by $\cA = \gamma\cdot\nabla$ is well defined.  

\subsection{An Expression for the Asymptotic Variance}
The main objective of this paper is to study the effect of the nonreversible perturbation $\gamma$ on the asymptotic variance $\sigma^2_f$, with the aim of choosing $\gamma$ so that $\sigma^2_f$ is minimized.  {Integrating (\ref{eq:asympt_var}) by parts we can express $\sigma^2_f$ in terms of the Dirichlet form for $\mathcal{L}$ as follows:}
\begin{align}
  \notag\frac{1}{2}\sigma_f^2 &= \int \norm{\nabla \phi(x)}^2\, \pi(dx) \\ 
   \notag&= - \int \phi(x)\nabla\cdot\left(\nabla\phi(x) \pi(x)\right)\,dx \\
         \notag&= -\int \phi(x) \left( \Delta \phi(x) + \nabla\log\pi(x)\cdot\nabla \phi(x) \right)\pi(x)\,dx \\
         \notag&= \langle \phi, (-\mathcal{S})\phi \rangle_{\pi} \\
         \label{eq:asympt_var1}&= \langle \phi, (-\mathcal{L})\phi \rangle_{\pi},
\end{align}
where the last line follows from the fact that $\cA$ is antisymmetric in $L^2_0(\pi)$.
\\\\
Starting from (\ref{eq:asympt_var1}) we can obtain a quite explicit characterisation of $\sigma^2_f$.   Indeed, given $f \in L^2_{0}(\pi)$, we can rewrite the last line of (\ref{eq:asympt_var1}) as 
\begin{equation}
\label{eq:sigma_2_f_symm}
  \sigma^2_f = 2\langle f, (-\mathcal{L})^{-1}f \rangle_{\pi} = 2\langle f, \left[(-\mathcal{L})^{-1}\right]^S f\rangle_{\pi},
\end{equation}
where $[\cdot]^{S}$ denotes the symmetric part of the operator.  Using the fact that $-\mathcal{L}$ is invertible on $L^2_0(\pi)$ for all $\alpha \in \mathbb{R}$ by Lemma \ref{lemm:function_space_well_posedness}, the following result yields an expression for $\sigma^2_f(\alpha)$ in terms of $\cS$, $\cA$ and $\alpha$.
\begin{lemma} Let $\cL = \cS + \alpha \cA$, for $\alpha \in \mathbb{R}$.  Then we have
  \begin{equation}
    \label{eq:operator_expansion1}
    \left[(-\cL)^{-1}\right]^{S} = \left[-\cS + \alpha^2\cA^*(-\cS)^{-1}\cA\right]^{-1},
  \end{equation}
  where {$\cA^* = -\cA$} denotes the adjoint of $\cA$ in $L^2_0(\pi)$.  In particular, for all $f \in L^2_0(\pi)$, 
  \begin{equation}
    \label{eq:operator_expansion2}
    \sigma^2_f(\alpha) = 2\langle f,  (-\mathcal{L})^{-1}f \rangle_{\pi} \leq  2\langle f, (-\mathcal{S})^{-1}f \rangle_{\pi} = \sigma^2_f(0).
  \end{equation}
\end{lemma}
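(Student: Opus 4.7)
The plan is to exploit the decomposition $-\cL = (-\cS) + (-\alpha\cA)$ into its symmetric and antisymmetric parts in $L^2_0(\pi)$ in order to compute the symmetric part of the inverse explicitly. The key observation is that since $\cA^* = -\cA$, we have $(-\cL)^* = -\cS + \alpha\cA$, and so $[(-\cL)^{-1}]^S = \tfrac{1}{2}\bigl((-\cL)^{-1} + ((-\cL)^*)^{-1}\bigr)$ using $(T^{-1})^* = (T^*)^{-1}$. Both operators are well-defined on $L^2_0(\pi)$ by Lemma \ref{lemm:function_space_well_posedness}.

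Given $f \in L^2_0(\pi)$, I would set $\phi = (-\cL)^{-1}f$ and $\phi^* = [(-\cL)^*]^{-1}f$, and then introduce $\psi = \tfrac{1}{2}(\phi+\phi^*)$ and $\chi = \tfrac{1}{2}(\phi-\phi^*)$, so that $[(-\cL)^{-1}]^S f = \psi$. From the defining equations $-\cS\phi = f + \alpha\cA\phi$ and $-\cS\phi^* = f - \alpha\cA\phi^*$, summing and subtracting gives the coupled system
\begin{align*}
(-\cS)\psi &= f + \alpha\cA\chi, \\
(-\cS)\chi &= \alpha\cA\psi.
\end{align*}
Solving the second for $\chi = \alpha(-\cS)^{-1}\cA\psi$ and substituting into the first yields $(-\cS)\psi - \alpha^2\cA(-\cS)^{-1}\cA\psi = f$. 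Using $\cA^* = -\cA$ this rearranges to $\bigl[-\cS + \alpha^2\cA^*(-\cS)^{-1}\cA\bigr]\psi = f$, which is exactly (\ref{eq:operator_expansion1}).

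For the inequality (\ref{eq:operator_expansion2}), the antisymmetric part of an operator contributes zero to a quadratic form, so $\sigma^2_f(\alpha) = 2\langle f,(-\cL)^{-1}f\rangle_\pi = 2\langle f,[(-\cL)^{-1}]^Sf\rangle_\pi$. The operator $\cA^*(-\cS)^{-1}\cA$ is nonnegative (since $(-\cS)^{-1}$ is positive selfadjoint on $L^2_0(\pi)$, so $\langle u,\cA^*(-\cS)^{-1}\cA u\rangle_\pi = \langle \cA u, (-\cS)^{-1}\cA u\rangle_\pi \geq 0$). Hence as quadratic forms $-\cS \leq -\cS + \alpha^2\cA^*(-\cS)^{-1}\cA$, and inverting reverses the inequality, giving the bound $\sigma^2_f(\alpha) \leq \sigma^2_f(0)$.

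The main technical obstacle is justifying the domain manipulations when we write $(-\cS)^{-1}\cA\psi$: we need $\cA\psi \in L^2_0(\pi)$, which follows from $\psi \in \cH^1$ together with the boundedness of $\gamma$ in Assumption \ref{ass:smooth_bounded_gamma_ass}, and then $(-\cS)^{-1}\cA\psi \in \cH^1$ so that $\cA$ can be applied again. A secondary care point is the operator-inequality inversion, which requires both operators to be strictly positive on $\cH^1$; this is supplied by the Poincar\'e inequality in Lemma \ref{lemm:function_space_well_posedness}, which in particular bounds $(-\cS)$ from below by a positive multiple of the identity on $L^2_0(\pi)$.
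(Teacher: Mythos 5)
Your argument is correct, and it reaches the identity by a route that is recognizably parallel to, but computationally different from, the paper's. Both proofs start from the same symmetrization $[(-\cL)^{-1}]^{S}=\tfrac12\bigl[(-\cS-\alpha\cA)^{-1}+(-\cS+\alpha\cA)^{-1}\bigr]$. The paper then conjugates by the square root $\cQ=(-\cS)^{-1/2}$ (defined via functional calculus) and invokes the resolvent identity $(I+B)^{-1}+(I-B)^{-1}=2(I-B^{2})^{-1}$ with $B=\alpha\cQ\cA\cQ$, which keeps everything expressed through the bounded symmetrized operator $\cQ\cA\cQ$ and makes the domain bookkeeping essentially automatic. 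You instead apply the two inverses to a fixed $f$, form the symmetric and antisymmetric combinations $\psi,\chi$ of the two Poisson solutions, and eliminate $\chi$ from the resulting coupled system --- in effect a Schur-complement argument. This is more elementary (no fractional powers of $-\cS$), but it shifts the burden onto exactly the domain issues you flag at the end: one needs $\phi,\phi^{*}\in\cH^{2}$ so that $(-\cS)\psi$ and $(-\cS)\chi$ make sense as elements of $L^{2}_{0}(\pi)$, and $\cA\psi\in L^{2}_{0}(\pi)$ so that $(-\cS)^{-1}$ can be applied; both follow from Assumption D and Lemma \ref{lemm:function_space_well_posedness}, as you indicate, so the gap is only one of explicitness. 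Your derivation of \eqref{eq:operator_expansion2} --- nonnegativity of $\cA^{*}(-\cS)^{-1}\cA$ plus operator monotonicity of the inverse, with strict positivity supplied by the Poincar\'e inequality --- is exactly the paper's argument.
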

\begin{proof}
 Since $-\cS$ is positive, the operator $\mathcal{Q} = (-\cS)^{-\frac{1}{2}}:L^2_0(\pi) \rightarrow \cH^1$ can be defined using functional calculus.  Consider the operator $\mathcal{C} := \mathcal{Q}\cA$.  We can write 
  $-\cS + \alpha^2\cA^*(-\cS)^{-1}\cA =  -\cS + \alpha^2\mathcal{C}^*\mathcal{C}$,
  which can be shown to be closed in $L^2_0(\pi)$ with domain $\cH^2$.  Moreover, this operator has nullspace $\lbrace 0 \rbrace$, so that the inverse is also densely defined on $L^2_0(\pi)$.   To show that (\ref{eq:operator_expansion1}) holds, we expand the left hand side to get
  \begin{align*}
    \left[(-\cL)^{-1}\right]^{S} &= \frac{1}{2}\left[(-\cS + \alpha \cA)^{-1} + (-\cS - \alpha \cA)^{-1}\right]\\
                  &= \frac{1}{2}\cQ\left[ ( I + \alpha \cQ\mathcal{A}\cQ\right)^{-1} + \left( I - \alpha \cQ\mathcal{A}\cQ)^{-1}  \right]\cQ.
  \end{align*}
  Since
  \begin{align*}
    \left( I + \alpha \cQ\mathcal{A}Q\right)^{-1} + \left( I - \alpha \cQ\mathcal{A}\cQ\right)^{-1} = &\left( I + \alpha \cQ\mathcal{A}\cQ\right)^{-1}\left( I - \alpha \cQ\mathcal{A}\cQ\right)^{-1}\left( I - \alpha \cQ\mathcal{A}\cQ\right)\\
      &~ + \left( I - \alpha \cQ\mathcal{A}\cQ\right)^{-1}\left( I + \alpha \cQ\mathcal{A}\cQ\right)^{-1}\left( I + \alpha \cQ\mathcal{A}\cQ\right) \\
      = &\left[I - \alpha^2 \cQ\cA \cQ^2 \cA \cQ\right]^{-1} 2I,
  \end{align*}
  therefore
  \begin{align*}
    \left[(-\cL)^{-1}\right]^{S} &= \cQ\left[I - \alpha^2 \cQ\cA \mathcal{Q}^2 \cA \cQ\right]^{-1}\cQ = \left[-\cS + \alpha^2 \cA^* (-\cS)^{-1} \cA\right]^{-1},
  \end{align*}
  as required.  The inequality (\ref{eq:operator_expansion2}) is then simply a consequence of the fact that 
  $$
    -\cS + \alpha^2 \cA^*(-\cS)^{-1}\cA \succeq -\cS
  $$
  where $\succeq$ denotes the partial ordering between selfadjoint operators on $L^2_0(\pi)$.
  \qed
\end{proof}
$ $
\\
Thus, the asymptotic variance is never increased by introducing a nonreversible perturbation, for all $f \in L^2_0(\pi)$.  {This had already been noted in \cite{rey2014irreversible} where an expression for the the asymptotic was derived as the curvature of the rate function of the empirical measure, and also in \cite{hwang2014variance} using an approach similar to that above.} Expression (\ref{eq:sigma_2_f_symm}) provides us with a formula for $\sigma^2_f$ in terms of a symmetric quadratic form which is explicit in terms of $\cA$ and $\cS$.

\subsection{Quantitative estimates for the Asymptotic Variance
}
In this section we derive quantitative versions of (\ref{eq:operator_expansion1}) and (\ref{eq:operator_expansion2}), using techniques developed in \cite{pavliotis2010asymptotic} for the analysis of the Green-Kubo formula, which is itself based on earlier work on the estimation of the eddy diffusivity in turbulent diffusion \cite{avellanada1,bhattacharya1989asymptotics,majda1993effect,thesis}.    
\\\\
Following the approach of \cite{pavliotis2010asymptotic,golden1983bounds}, to quantify the effect of the antisymmetric perturbation $\alpha \mathcal{A}$ on the asymptotic variance, we define the operator $\cG = (-\cS)^{-1}\cA:\cH^{1}\rightarrow \cH^{1}$.  First we note that we can rewrite (\ref{eq:operator_expansion1}) as follows:
\begin{align}
  \notag\left[(-\cL)^{-1}\right]^{S} &= \left[-\cS + \alpha^2\cA^*(-\cS)^{-1}\cA\right]^{-1} \\
  \label{eq:operator_expansion4}&= \left[I + \alpha^2 (-\cS)^{-1}\cA^*(-\cS)^{-1}\cA\right]^{-1}(-\cS)^{-1} \\
  \notag&= [I - \alpha^2\mathcal{G}^2]^{-1}(-\mathcal{S})^{-1}
\end{align}
and therefore, from (\ref{eq:sigma_2_f_symm}) and (\ref{eq:operator_expansion1}):
\begin{align}
  \notag\frac{1}{2}\sigma^2_f(\alpha) &= \left\langle (-\cS)\hat{f}, \left[I - \alpha^2\cG^2\right]^{-1}(-\cS)^{-1}(-\cS)\hat{f}\right\rangle_{\pi}\\
                 \notag    &=  \left\langle \hat{f},  (-\cS)\left[I - \alpha^2\cG^2\right]^{-1}\hat{f}\right\rangle_{\pi}\\
                 \label{eq:operator_decomposition_G}    &= \left\langle \hat{f}, \left[I - \alpha^2\cG^2\right]^{-1}\hat{f}\right\rangle_{1}.
\end{align}
From the boundedness of the nonreversible perturbation we have the following properties:
\begin{lemma}
  Suppose that Assumption \ref{ass:smooth_bounded_gamma_ass} holds, then the operator $\cG= (-\cS)^{-1}\cA$ is skew-adjoint on $\cH^1$.
\end{lemma}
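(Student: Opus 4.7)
The plan is to first establish that $\cG: \cH^1 \to \cH^1$ is a bounded linear operator, then verify the skew-symmetry identity $\langle \cG f, g\rangle_1 = -\langle f, \cG g\rangle_1$, and finally invoke the standard fact that a bounded skew-symmetric operator on a Hilbert space is skew-adjoint.

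For boundedness, I would use Assumption \ref{ass:smooth_bounded_gamma_ass} to bound $\cA: \cH^1 \to L^2_0(\pi)$ by
\begin{equation*}
\|\cA f\|_\pi = \|\gamma\cdot\nabla f\|_\pi \leq \|\gamma\|_{L^\infty}\|\nabla f\|_\pi = \|\gamma\|_{L^\infty}\|f\|_1.
\end{equation*}
Using the definition of the $\cH^1$ inner product and the fact that $(-\cS)\cG f = \cA f$, one obtains
\begin{equation*}
\|\cG f\|_1^2 = \langle \cG f, (-\cS)\cG f\rangle_\pi = \langle (-\cS)^{-1}\cA f, \cA f\rangle_\pi \leq \tfrac{1}{\lambda}\|\cA f\|_\pi^2,
\end{equation*}
where the last inequality follows by Cauchy--Schwarz and the $L^2(\pi)$-bound $\|(-\cS)^{-1}g\|_\pi \leq \lambda^{-1}\|g\|_\pi$ (itself a direct consequence of the Poincar\'e inequality from Lemma \ref{lemm:function_space_well_posedness}). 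Combining these gives $\|\cG f\|_1 \leq \lambda^{-1/2}\|\gamma\|_{L^\infty}\|f\|_1$, so $\cG$ is bounded on $\cH^1$.

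For skew-symmetry, I would first verify on a dense class of smooth test functions (e.g.\ $\mathcal{D}(\cS) = \cH^2$) the chain of identities
\begin{equation*}
\langle \cG f, g\rangle_1 = \langle (-\cS)^{-1}\cA f, (-\cS)g\rangle_\pi = \langle \cA f, g\rangle_\pi = -\langle f, \cA g\rangle_\pi = -\langle f, \cG g\rangle_1,
\end{equation*}
where the middle equality uses the antisymmetry of $\cA$ in $L^2_0(\pi)$, which is an integration by parts justified by the divergence-free condition \eqref{e:divergence_free} together with the boundedness and smoothness of $\gamma$. The identity then extends to all of $\cH^1$ by density and the boundedness of $\cG$ established above.

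Finally, because $\cG$ is bounded and skew-symmetric on the whole Hilbert space $\cH^1$, its adjoint $\cG^*$ has domain $\cH^1$ and coincides with $-\cG$, giving skew-adjointness. The main technical point is the justification of the integration by parts at the heart of the antisymmetry of $\cA$: this would be the only place where one needs to be careful about the regularity and decay of $f,g$, but since $\gamma$ is smooth and bounded and smooth compactly supported functions are dense in $\cH^1$ under Assumptions \ref{ass:drift_condition}--\ref{ass:logpi}, this is routine.
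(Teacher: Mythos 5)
Your proposal is correct and follows essentially the same route as the paper: the identical chain of identities $\langle \cG f, g\rangle_1 = \langle \cA f, g\rangle_\pi = -\langle f, \cA g\rangle_\pi = -\langle f, \cG g\rangle_1$, combined with boundedness of $\cG$, to conclude skew-adjointness. The only difference is that you spell out the boundedness estimate $\Norm{\cG f}_1 \leq \lambda^{-1/2}\Norm{\gamma}_{L^\infty}\Norm{f}_1$ explicitly, whereas the paper asserts it using the bound $\Norm{\cA u}_\pi \leq \Norm{\gamma}_{L^\infty}\Norm{u}_1$ established just before the lemma.
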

\begin{proof}
 For $f, g \in \cH^1$:
  $$
    \langle \cG f, g \rangle_{1} =  \langle (-\cS)^{-1}\cA f, (-\cS)g \rangle_{\pi} = -\langle  f, \cA g \rangle_{\pi} =  -\langle  f, (-\cS)(-\cS)^{-1}\cA g \rangle_{\pi} = -\langle  f, \cG  g \rangle_{1},
  $$
  so that $\cG$ is antisymmetric.  Since $\cG$ is also bounded, it follows that $\cG$ is skewadjoint on $\cH^1$. 
  \qed
\end{proof}

Moreover, under appropriate assumptions on the target distribution $\pi$, one can show that the operator $\cG$ is compact. 

\begin{lemma}
\label{lemma:compact}
  Suppose that Assumptions \ref{ass:drift_condition}, \ref{ass:logpi} and \ref{ass:smooth_bounded_gamma_ass} hold, then the operator $\cG$ is compact on $\cH^1$.
\end{lemma}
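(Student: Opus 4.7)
The plan is to factor $\cG$ through $L^2_0(\pi)$ as
$$
  \cG \;=\; (-\cS)^{-1}\circ\cA \,:\, \cH^1 \xrightarrow{\;\cA\;} L^2_0(\pi) \xrightarrow{\,(-\cS)^{-1}\,} \cH^1,
$$
and to show that $\cA$ is bounded while the resolvent $(-\cS)^{-1}$ is compact on this pair; compactness of the composition then yields compactness of $\cG$.

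Boundedness of $\cA:\cH^1\to L^2_0(\pi)$ is immediate from Assumption~\ref{ass:smooth_bounded_gamma_ass}, since $\Norm{\cA u}_\pi \leq \Norm{\gamma}_{L^\infty}\Norm{\nabla u}_\pi = \Norm{\gamma}_{L^\infty}\Norm{u}_1$ by~\eqref{e:h1_defn}.

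The key step is establishing that $(-\cS)^{-1}:L^2_0(\pi)\to\cH^1$ is compact. The Poincar\'{e} inequality of Lemma~\ref{lemm:function_space_well_posedness} gives $\Norm{(-\cS)^{-1}f}_1 \leq \lambda^{-1/2}\Norm{f}_\pi$, so $(-\cS)^{-1}:L^2_0(\pi)\to\cH^1$ is at least bounded; composing with the compact embedding $\cH^1\hookrightarrow L^2_0(\pi)$ from the same lemma shows that $(-\cS)^{-1}$ is compact \emph{as an operator on $L^2_0(\pi)$}. Being self-adjoint and positive, it admits an $L^2_0(\pi)$-orthonormal basis of eigenfunctions $\{e_n\}_{n\geq 1}$ with $-\cS e_n = \lambda_n e_n$ and $0<\lambda_1\leq\lambda_2\leq\cdots\to\infty$. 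The rescaled family $\{e_n/\sqrt{\lambda_n}\}$ is then a complete orthonormal basis of $\cH^1$: orthonormality follows from $\langle e_n,e_m\rangle_1 = \lambda_m\langle e_n,e_m\rangle_\pi$, while completeness reduces to the form-domain identity $\Norm{u}_1^2 = \sum_n \lambda_n |\langle u, e_n\rangle_\pi|^2$ valid for $u\in\cH^1$. Bounding the finite-rank truncations $P_N f := \sum_{n\leq N}\lambda_n^{-1}\langle f,e_n\rangle_\pi\, e_n$ in operator norm from $L^2_0(\pi)$ to $\cH^1$,
$$
  \Norm{((-\cS)^{-1} - P_N)f}_1^2 \;=\; \sum_{n>N}\lambda_n^{-1}|\langle f,e_n\rangle_\pi|^2 \;\leq\; \lambda_{N+1}^{-1}\Norm{f}_\pi^2,
$$
and letting $N\to\infty$ exhibits $(-\cS)^{-1}$ as a uniform limit of finite-rank operators $L^2_0(\pi)\to\cH^1$, hence compact.

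The only mildly delicate point is verifying that the $L^2_0(\pi)$-eigenbasis of $-\cS$ lifts to a complete orthonormal basis of $\cH^1$; this is essentially the identification of $\cH^1$ with the form domain of $-\cS$ and is safe once Poincar\'{e} provides a strict spectral gap. Everything else is a standard composition argument.
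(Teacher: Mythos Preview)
Your argument is correct, but it takes a different and somewhat longer route than the paper. The paper observes directly that $\cG$ maps $\cH^1$ boundedly into $\cH^2$: since $(-\cS)\cG = \cA$, one has
\[
  \Norm{\cG f}_2^2 = \langle (-\cS)\cG f, (-\cS)\cG f\rangle_\pi = \Norm{\cA f}_\pi^2 \leq \Norm{\gamma}_{L^\infty}^2\Norm{f}_1^2,
\]
and then invokes that the embedding $\cH^2 \hookrightarrow \cH^1$ is compact (this follows from the compactness of $\cH^1 \hookrightarrow L^2_0(\pi)$ in Lemma~\ref{lemm:function_space_well_posedness} by conjugating with the isometry $(-\cS)^{1/2}$). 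That is the entire proof.

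You instead factor $\cG$ through $L^2_0(\pi)$ and establish compactness of $(-\cS)^{-1}:L^2_0(\pi)\to\cH^1$ via the spectral theorem and a finite-rank approximation. This is valid, but the spectral machinery is heavier than necessary: the same conclusion follows in one line by noting that $(-\cS)^{-1}$ is an isometry from $L^2_0(\pi)$ onto $\cH^2$ (indeed $\Norm{(-\cS)^{-1}f}_2^2 = \langle (-\cS)^{-1}f, (-\cS)f\rangle_\pi = \Norm{f}_\pi^2$) followed by the compact embedding $\cH^2\hookrightarrow\cH^1$. Your approach does have the side benefit of producing the eigenbasis of $\cG$-related operators explicitly, which is in the spirit of the spectral representation used later in the paper; the paper's two-line argument, however, gets to compactness without any appeal to the spectral theorem.
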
 
\begin{proof}
 By Lemma \ref{lemm:function_space_well_posedness}, the embedding $\cH^1 \subset L^2_0(\pi)$ is compact, and it follows immediately that $\cH^2 \subset \cH^1$ is a compact embedding.  Moreover, since $\gamma$ is bounded we have,
  \begin{align*}
    \Norm{\cG f}_2^2 &= \langle (-\cS)\cG f, (-\cS)\cG f\rangle_{\pi} =  \langle \cA f,\cA f\rangle_{\pi}  \leq \Norm{ f}_{1}^2, \quad  f \in \cH^1,
  \end{align*}
  from which the result follows.\qed
\end{proof}

Extending $\cH^1$ to its complexification, there exists a compact selfadjoint operator $\Gamma$ on $\cH^1$ such that $\cG = i\Gamma$.  {From the spectral theorem for compact selfadjoint operators \cite[Theorem 6.21]{helffer2013spectral}}, the eigenfunctions of $\Gamma$ form a complete orthonormal basis in $\cH^1$, and the eigenvalues of $\Gamma$ are real.  We can partition the eigenfunctions into those spanning $\cN  := {\rm Ker}[\Gamma] = {\rm Ker}[\mathcal{A}]$ and those spanning $\cN^{\bot}$.  We denote by 
$$
  \left\lbrace \lambda_{n}\right\rbrace_{n=1,2,3,\ldots}, \mbox{ and } \left\lbrace e_n \right\rbrace_{n=1,2,3,\ldots},
$$
the  eigenvalues and corresponding eigenfunctions of the operator $\Gamma$ restricted to $\cN^{\bot}$.  For $f \in L^2_0(\pi)$ we have the following unique decomposition for $\hat{f} = (-\cS)^{-1}f$ in $\cH^{1}$:
\begin{equation}
  \hat{f} = \hat{f}_{\mathcal{N}} + \sum_{n=1}^{\infty}\hat{f}_n e_n,
\end{equation}
where $\hat{f}_{\mathcal{N}} \in \mathcal{N}$ and $\hat{f}_n = \langle \hat{f}, e_n \rangle_{1}$.
Consequently, using this spectral decomposition, we can rewrite (\ref{eq:operator_decomposition_G}) as 
\begin{align}
 \notag\sigma^2_{f}(\alpha) &= 2\left\langle \hat{f}, \left[I - \alpha^2\cG^2\right]^{-1}\hat{f}\right\rangle_{1}. \\
                       \notag&= 2\left\langle \hat{f}, \left[I - \alpha^2(i\Gamma)^2\right]^{-1}\hat{f}\right\rangle_{1}\\
                       \notag&= 2\left\langle \hat{f}, \left[I  + \alpha^2\Gamma^2\right]^{-1}\hat{f}\right\rangle_{1}\\
\label{e:deff_expansion}&= 2\Norm{\hat{f}_{\mathcal{N}}}^2_1 + 2\sum_{n=1}^{+\infty} \frac{1}{1 + \alpha^2 \lambda^2_n}\norm{\hat{f}_n}^2.
\end{align}

The conclusion of the above computation is summarized in the following result.

\begin{theorem}\label{thm:deff_expansion}
Suppose that Assumptions \ref{ass:drift_condition}, \ref{ass:logpi} and \ref{ass:smooth_bounded_gamma_ass} hold.  Let $f \in L^2_0(\pi)$ and $\alpha \in \mathbb{R}$, then the asymptotic variance $\sigma^2_f(\alpha)$ corresponding to $X_t^\gamma$ is given by (\ref{e:deff_expansion}).
In particular, we obtain the following limiting values for the asymptotic variance:
\begin{equation}\label{e:small_alpha}
\lim_{\alpha \rightarrow 0} \sigma^2_f(\alpha) = \sigma^2_f(0) = 2\| \hat{f} \|_{1}^2 ,
\end{equation}
and,
\begin{equation}\label{e:large_alpha1}
\lim_{\alpha \rightarrow \pm\infty} \sigma^2_f(\alpha) =  2\| \hat{f}_{\cN}\|^2_{1}. 
\end{equation}
\end{theorem}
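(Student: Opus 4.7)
The heavy lifting has already been done in the computations leading up to the statement: the identity~\eqref{eq:operator_expansion4} reduces the symmetric part of $(-\cL)^{-1}$ to $[I-\alpha^2\cG^2]^{-1}(-\cS)^{-1}$, and Lemma~\ref{lemma:compact} tells us that $\cG$ is a compact skew-adjoint operator on the Hilbert space $\cH^1$. My plan is therefore to treat the theorem as a rigorous packaging of the formal manipulation~\eqref{eq:operator_decomposition_G}, followed by a dominated convergence argument for the limiting statements.

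First I would complexify $\cH^1$ and invoke the spectral theorem for compact self-adjoint operators applied to $\Gamma = -i\cG$. This produces the orthonormal eigenbasis $\{e_n\}$ of $\cN^\perp$ with real eigenvalues $\lambda_n\ne 0$ accumulating only at $0$, together with an orthogonal projection onto $\cN = \mathrm{Ker}(\cA)$. Using that $\hat f := (-\cS)^{-1}f$ lies in $\cH^1$ (Lemma~\ref{lemm:function_space_well_posedness}), I would expand
\[
\hat f = \hat f_\cN + \sum_{n\ge 1}\hat f_n e_n,\qquad \hat f_n = \langle \hat f,e_n\rangle_1,
\]
with $\|\hat f\|_1^2 = \|\hat f_\cN\|_1^2 + \sum_n |\hat f_n|^2<\infty$. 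Applying the bounded selfadjoint operator $[I+\alpha^2\Gamma^2]^{-1}$ term by term (it acts as the identity on $\cN$ and as multiplication by $(1+\alpha^2\lambda_n^2)^{-1}$ on the $n$-th eigenspace) and using that $\sigma_f^2(\alpha)/2 = \langle \hat f,[I-\alpha^2\cG^2]^{-1}\hat f\rangle_1$ from~\eqref{eq:operator_decomposition_G}, I would obtain the series~\eqref{e:deff_expansion}. The step that needs a little care is verifying that the $\cH^1$-inner product can be distributed over the (norm-convergent in $\cH^1$) eigenexpansion; this follows from continuity of $\langle\cdot,\cdot\rangle_1$ and the fact that $[I+\alpha^2\Gamma^2]^{-1}$ is bounded, so no subtlety beyond Parseval is required.

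For the limits I would argue by dominated convergence on the series. Each summand satisfies the uniform bound
\[
0\le \frac{|\hat f_n|^2}{1+\alpha^2\lambda_n^2}\le |\hat f_n|^2,
\]
and $\sum_n|\hat f_n|^2 \le \|\hat f\|_1^2<\infty$ provides the integrable dominator. As $\alpha\to 0$, each term increases to $|\hat f_n|^2$, giving $\sigma_f^2(0) = 2\|\hat f_\cN\|_1^2 + 2\sum_n|\hat f_n|^2 = 2\|\hat f\|_1^2$, which matches the Kipnis--Varadhan formula. As $|\alpha|\to\infty$, each summand tends to $0$ (since $\lambda_n\ne 0$ on $\cN^\perp$), so the series vanishes in the limit and we are left with $2\|\hat f_\cN\|_1^2$.

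The only real obstacle is the last point: making sure that we have correctly characterised the kernel $\cN$ of $\Gamma$ (equivalently of $\cG$) as $\mathrm{Ker}(\cA)$ inside $\cH^1$, so that the projection $\hat f_\cN$ in~\eqref{e:large_alpha1} is unambiguous. Since $(-\cS)^{-1}:L^2_0(\pi)\to\cH^1$ is a bijection with bounded inverse $-\cS$, we have $\cG u = 0$ in $\cH^1$ iff $\cA u = 0$ in $L^2_0(\pi)$, and the corresponding eigenvalues on $\cN^\perp$ are bounded away from $0$ only in the trivial sense that each individual $\lambda_n$ is nonzero. With this identification the two limiting formulas follow immediately from the dominated convergence argument, completing the proof.
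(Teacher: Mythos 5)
Your proposal is correct and follows essentially the same route as the paper: the reduction $\tfrac12\sigma_f^2(\alpha)=\langle \hat f,[I-\alpha^2\cG^2]^{-1}\hat f\rangle_1$, the spectral decomposition of the compact skew-adjoint operator $\cG=i\Gamma$ on $\cH^1$, and the resulting series \eqref{e:deff_expansion}. The only addition is your explicit dominated-convergence justification of the two limits (using $\lambda_n\neq 0$ on $\cN^\perp$ and $\sum_n|\hat f_n|^2<\infty$), which the paper leaves implicit but which is exactly the right way to make that step rigorous.
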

\qed
From (\ref{e:deff_expansion}) we obtain the following bounds on the asymptotic variance
\begin{equation*}
\sigma^2_f(\alpha) =  2\| \hat{f}_{\cN}\|^2_{1} + 2\sum_{n=1}^{+\infty} \frac{|\hat{f}_n|^2}{1 + \alpha^2 \lambda_n^2}  \leq  2\| \hat{f}_{\cN}\|^2_{1} + 2\sum_{n=1}^{+\infty} |\hat{f}_n|^2 = 2|| \hat{f} ||_{1}^2 = \sigma^2_{f}(0).
\end{equation*}
\\
The problem of choosing the optimal nonreversible perturbation in (\ref{e:nonreversible}) to minimize the asymptotic variance over all observables in $L^2_0(\pi)$ can be expressed as the following min-max problem: 
$$
  \min_{\gamma \in \mathbb{A}_M}\max_{f \in L^2_0(\pi)}\frac{\left\langle f, (-\cS - \cA(-\cS)^{-1}\cA)^{-1} f \right\rangle_{\pi}}{\Norm{f}_{\pi}^2},\quad \mbox{ where } \cA = \gamma\cdot\nabla\cdot,
$$
where, for some constant $M > 0$,  $\mathbb{A}_M$ denotes the set of admissible nonreversible drifts, typically, 
$$\mathbb{A}_M = \left\lbrace \gamma \in C^\infty_b(\mathbb{R}^d; \mathbb{R}^d) \, : \nabla\cdot(\gamma\pi) = 0 \mbox{ and } \Norm{\gamma}_{L^\infty} < M\right\rbrace.$$
This is equivalent to finding $\gamma \in \mathbb{A}_M$ which solves this max-min problem
\begin{equation}
\label{eq:max_min_variance}
  \max_{\gamma \in \mathbb{A}_M}\min \sigma\left(-\cS + \cA^{*}(-\cS)^{-1}\cA\right),
\end{equation}
where $\sigma[\mathcal{M}]$ denotes the spectrum of the operator $\mathcal{M}$ on $L^2_{0}(\pi)$.  It is important to make the distinction between this problem and that considered in \cite{lelievre2013optimal} for finding the optimal spectral gap, namely finding $\gamma \in \mathbb{A}_M$ such that
\begin{equation}
\label{eq:max_min_spectral}
\max_{\gamma \in \mathbb{A}_M}\min \mbox{Re}\,\left(\sigma\left(-\cS - \cA\right)\right).
\end{equation}
 From  (\ref{eq:max_min_variance}) and (\ref{eq:max_min_spectral}) it is evident that the decrease in asymptotic variance, and the increase in spectral gap arising from a nonreversible perturbation are due to very different mechanisms.  For the latter problem we see that the increase of spectral gap arises from the nonnormality of $\cS + \cA$.  In addition, since the operator $\mathcal{A}^*(-\mathcal{S})^{-1}\mathcal{A}$ is nonnegative in $\cH^1$, a nonreversible perturbation cannot increase the variance.  This is in contrast with the problem of maximising the speed of convergence to equilibrium, as was considered in \cite{lelievre2013optimal}, where increasing the strength of the nonreversible perturbation could result in a decrease of the speed of convergence.
\\\\
We note however that a nonreversible perturbation $\gamma$ which solves the min--max problem (\ref{eq:max_min_variance}) need not be a good candidate for reducing the variance of $\pi_T(f)$ for a given fixed observable $f \in L^2_0(\pi)$.  Indeed, unless $\mathcal{N} = \lbrace 0 \rbrace$ for all $g \in \cH^1$, there will always be an observable for which the nonreversible perturbation does not reduce the variance, since if $g \in \mathcal{N}$ is nonzero, $f = (-\mathcal{S})g$ is nonzero and such that $\sigma^2_f(\alpha) = \sigma^2_f(0)$ for any $\alpha$.  Thus, from a practical point of view, it makes more sense to consider the problem choosing $\gamma$ to minimise the asymptotic variance of $\pi_T(f)$ for a particular observable.   To this end, for a fixed $f \in L^2_0(\pi)$, we can identify two distinct cases:   $(-\cS)^{-1}f \in \mathcal{N}^{\perp}$, in which case  
$$\lim_{|\alpha|\rightarrow\infty}\sigma^2_f(\alpha) = 0,$$ and $(-\cS)^{-1}f \not\in \mathcal{N}^{\perp}$ in which case, $$\lim_{|\alpha|\rightarrow\infty}\sigma^2_{f}(\alpha) =  2\Norm{\hat{f}_{\cN}}^2_{1} > 0.$$

More generally, consider $f \in L^2(\pi)$ so that $f - \pi(f) \in L^2_0(\pi)$.  Assuming we can increase $\alpha$ arbitrarily, {and neglecting any computational issues arising from the resulting discretisation error (which will be discussed in Section \ref{sec:comp_cost})}, the problem of minimising the asymptotic variance of $f$ reduces to finding $\gamma$ such that
\begin{equation}
\label{eq:optimality_criteria}
  (-\cS)^{-1}(f - \pi(f)) \perp \mathcal{N},
\end{equation}
holds in $\cH^1$.  Checking this condition requires the solution of an elliptic boundary value problem, thus this condition is not of practical use.  Nonetheless, we can derive some intuition from (\ref{eq:optimality_criteria}).  Clearly, if we can choose $\gamma$ so that $\mathcal{N} = \lbrace 0 \rbrace$ in $\cH^1$, the nonreversible perturbation will be optimal for all observables $f \in L^2(\pi)$.  In general, it might not be possible to find $\gamma$ that satisfies this condition.  For the nonreversible drift considered in \cite{lelievre2013optimal}, namely $\gamma = J\nabla V$, with $J^\top = -J$ , $\mathcal{N}$ will always be nontrivial; indeed, in this case  $H\circ V \in \mathcal{N}$ for all functions $H$ such that $H\circ V \in \cH^1$.   In this case, it is always possible to choose an observable $f$ such that $\gamma$ will not be optimal for $f$, in the sense that $\sigma_f^2(\alpha)$  is nonzero in the limit $\alpha\rightarrow\infty$.  We also remark that these asymptotic results, in particular the distinction between these two cases is reminiscent of similar results that have been obtained in the context of turbulent diffusion \cite{majda1993effect,thesis}. 
\\\\
An analogous classification of the asymptotic behaviour of the spectral gap of the operator (\ref{eq:operator_decomposition}) in the limit of large $\alpha$  is considered in \cite{franke2010behavior} on a compact manifold.  Indeed, in \cite[Theorem 1]{franke2010behavior} it is determined that the spectral gap is finite in the limit of $\alpha\rightarrow \pm\infty$ if and only if $\cA$ has a nonconstant eigenfunction in $\cH^1$.  However, one should note that, the asymptotic variance $\sigma^2_f(\alpha)$ may converge to $0$ as $\alpha\rightarrow \pm\infty$ even when the spectral gap is finite, see also \cite[Example 2.9]{rey2014irreversible} for a counterexample.

\subsection{A Two Dimensional Example}
In this section we present a simple example on $\mathbb{R}^2$.  Consider the problem of calculating $\pi(f)$ with respect to the Gaussian distribution $\pi(x) = \frac{1}{Z}e^{-|x|^2/2}$.  The reversible overdamped Langevin equation corresponding to $\pi$ is given by the OU process:
\begin{equation}
\label{eq:gaussian_2d_sde}
dX_t = -X_t\,dt + \sqrt{2}\,dW_t.
\end{equation}
We introduce an antisymmetric perturbation $\alpha \gamma(X_t)$ where $\gamma(x)$ is the flow given by
$$
  \gamma(x) = \left(\begin{array}{cc} x_2 \\ -x_1 \end{array}\right). \\ 
$$
The infinitesimal generator for the perturbed nonreversible dynamics is
$$
  \mathcal{L} = \mathcal{S} + \alpha \mathcal{A},
$$
where 
$$
  \mathcal{S}f(x) = -x\cdot\nabla f(x) + \Delta f(x) \quad \mbox{ and } \mathcal{A}f(x) = \gamma(x)\cdot\nabla f(x).
$$
In polar coordinates, this is given by
$$
  \mathcal{L}f(r,\theta) = \left( -r + \frac{1}{r}\right)\partial_r f(r, \theta) + \partial_{r,r}f(r, \theta) + \alpha\partial_{\theta}f(r, \theta) + \frac{1}{r^2}f_{\theta,\theta}(r, \theta),\quad (r,\theta) \in \mathbb{R}_{> 0}\times (0,2\pi).
$$
As $|\alpha| \rightarrow +\infty$, we expect the deterministic flow to move increasingly along the level curves of the distribution $\pi(x)$.  Given an observable $f$, any variance in $\pi_T(f)$ arising from the variation of $f$ along the level curves should vanish as $|\alpha|\rightarrow \infty$,  leaving only the variance contributed by the variation of $f$ between level curves.  We make this precise with a particular example. Consider the observable $f(x_1,x_2) = 2 x_1^2$, expressible in polar coordinates as
$$
  f(r, \theta) = 2r^2\cos^2(\theta) = r^2\left( 1+ \cos(2 \theta)\right).
$$ 
Noting that $\pi(f) = 2$, it is straightforward to check that the Poisson equation $-\mathcal{L}\phi = f - \pi(f)$, has mean zero solution given by
$$
  \phi(r, \theta) = \left(\frac{r^2}{2} - 1\right)  + \frac{r^2}{2}\frac{\left(\cos(2\theta) - \alpha \sin(2\theta)  \right)}{1 + \alpha^2}
$$
The asymptotic variance can be evaluated directly as follows
\begin{align*}
  \sigma^2_f &= 2\left\langle \phi, (-\mathcal{L})\phi\right\rangle_{\pi} \\
             &= \frac{2}{Z}\int_0^{2\pi}\int_{0}^{\infty} \left(\left(\frac{r^2}{2} - 1\right)  + \frac{r^2}{2}\frac{\left(\cos(2\theta) - \alpha \sin(2\theta)  \right)}{1 + \alpha^2}\right)\left(r^2(1 + \cos(2\theta)) - 2\right)r e^{-r^2/2}\,dr\,d\theta \\
             &= 4\left( 1 + \frac{1}{1 + \alpha^2}\right).
\end{align*}
Therefore, as $|\alpha|\rightarrow \infty$, the asymptotic variance converges to $4$.  We note that, in this case,
$$
  \hat{f} = (-\mathcal{S})^{-1} \left(f - \pi(f)\right) =  \left(\frac{r^2}{2} - 1\right)  + \frac{r^2}{2}\left(\cos(2\theta) \right),
$$
where  $\left({r^2}/{2} - 1\right)$  is perpendicular to ${r^2}/{2} \cos(2\theta)$ in $\cH^1$.  Since the nullspace of $\mathcal{A}$ consists of all $\cH^1$ functions which depend only on $r$, we have $\hat{f}_{\mathcal{N}} = r^2/2 - 1$.  It follows that $2\Norm{\hat{f}_{\mathcal{N}}}^2_{1} = 4$, so that
$$
  \lim_{|\alpha|\rightarrow \infty}\sigma^2_f(\alpha) = 2\Norm{\hat{f}_{\mathcal{N}}}^2_{1},
$$
which agrees with the conclusions of Theorem \ref{thm:deff_expansion}.  As $|\alpha|\rightarrow \infty$, the motion in the $\theta$ direction is averaged out.  Indeed, in this limit, $2\Norm{\hat{f}_{\mathcal{N}}}^2_{1}$ corresponds to the asymptotic variance of the observable $\int_0^T r_t^2 \,dt$, where $r_t$ is the following 1D reversible process,
$$
dr_t = \left(-r_t + \frac{1}{r_t}\right)\,dt + \sqrt{2}\,dW_t,  \quad r_0 > 0.
$$
For more general observables $f(r, \theta)$, we expect maximum variance reduction as $|\alpha|\rightarrow \infty$ when $f$ varies strongly with respect to $\theta$.  In the other extreme, we expect zero improvement when $f$ depends only on~$r$.  This intuition is formalised in the following section, in particular in Proposition \ref{prop:gaussian_case} and the subsequent bound (\ref{eq:var_bound2}).
\\\\
For more general potentials, using a flow field of the form $\gamma(x) = J\nabla \log \pi(x)$, $J^\top = -J$, the mechanism for reducing the asymptotic variance is analogous: the large antisymmetric drift gives rise to fast deterministic mixing along the level curves of the potential,  while the reversible dynamics induce slow diffusive motion along the gradient of the potential.  When the nullspace of $\mathcal{A}$ is trivial in $\cH^1$, the fast deterministic flow is ergodic, so that, for $\alpha$ large, the antisymmetric component will cause a rapid exploration of the entire state space. Consequently, the asymptotic variance converges to $0$ as $\alpha \rightarrow \infty$.  On the other hand, if $\mathcal{A}$ has a nontrivial nullspace, the antisymmetric perturbation is no longer  ergodic, and the state space can be decomposed into components such that the rapid flow behaves ergodically in each individual component.  In the limit of large $\alpha$, $X_t$ becomes a fast-slow system, with rapid exploration within the ergodic components coupled to a slow diffusion between components.  Very recently, Rey-Bellet and Spiliopoulos \cite{rey2014variance} have applied Freidlin-Wenzell theory to rigorously analyse this case in the large $\alpha$ limit for a large class of potentials. 

\section{Nonreversible Perturbations of Gaussian Diffusions}\label{sec:gauss cxx}
For the case when the target distribution is Gaussian, the SDE (\ref{e:nonreversible}) for $X^\gamma_t$ is linear.  In this case,  we can obtain an explicit analytical expression for the asymptotic variance for a large class of observables $f$.  Indeed, consider the nonsymmetric Ornstein-Uhlenbeck process in $  \mathbb{R}^d$:
\begin{equation}
\label{eq:sde_linear}
  dX^\gamma_t = -(I + \alpha J)X^\gamma_t\,dt + \sqrt{2}\,dW_t,
\end{equation}
where $J$ is an antisymmetric matrix, $\alpha > 0$, and $W_t$ is a standard $d$-dimensional Brownian motion.  The stationary distribution $\pi(x)$ is $\mathcal{N}(0, I)$, independent of $\alpha$ and $J$. Although this system does not fall under the framework of Theorem \ref{thm:deff_expansion} we are still able to obtain analogous conditions for a reduction in the asymptotic variance. The objective of this section is to mirror the results for speeding up convergence to equilibrium of $X_t$ that were derived in \cite{lelievre2013optimal} to the case of minimizing the asymptotic variance.  In particular, following arguments similar to \cite[Section 4.2]{pavliotis2010asymptotic}, an explicit formula for the asymptotic variance will be derived, from which an optimal $J$ can be chosen, in a manner similar to \cite{lelievre2013optimal}.  We note that for the process (\ref{eq:sde_linear}), the optimal nonreversible perturbation obtained in \cite{lelievre2013optimal} does not provide any increase to the rate of convergence to equilibrium, since all eigenvalues of the covariance matrix of the Gaussian stationary distribution are the same.  Nonetheless, in this section we show that for certain observables, the asymptotic variance of $\pi_T(f)$ can be dramatically decreased.

\subsection{Explicit formula for the asymptotic variance}
We shall assume that the observable $f$ is a quadratic functional of the form 
\begin{equation}
\label{eq:quadratic_observable}
  f(x) = x\cdot M x + l\cdot x + k,
\end{equation}
where $M \in \mathbb{R}^{d\times d}$ is a symmetric positive definite matrix, $l \in \mathbb{R}^d$ and $k$ is a constant, chosen so that $f(x)$ is centered with respect to $\pi(x)$: $k = -\Tr M$.  Consider the Poisson equation (\ref{eq:poisson}):
\begin{equation}
\label{eq:poisson_quadratic}
-\mathcal{L}\phi(x) =x\cdot M x + l\cdot x - \Tr M, \quad \pi(\phi) = 0,
\end{equation}
where $\mathcal{L}$ is the infinitesimal generator of (\ref{eq:sde_linear}) given by $\mathcal{L} = -(I + \alpha J)x\cdot\nabla + \Delta$.  For such observables we can solve the Poisson equation (\ref{eq:poisson_quadratic}) analytically and obtain a closed-form formula for the asymptotic variance for the observable $f$.

\begin{proposition}
\label{prop:gaussian_case}
   Let $A = (I + \alpha J)^\top = (I - \alpha J)$.  The  unique mean zero solution of the Poisson equation (\ref{eq:poisson_quadratic}) is given by
  $$
    \phi(x) = x\cdot C x + D\cdot x - \Tr(C),
  $$
  where 
  \begin{equation}
    C = \frac{1}{2}\int_0^\infty e^{-A s} M e^{-A^\top s}\,ds,
  \end{equation}
  and 
  $$
    D = A^{-1}l.
  $$
  Moreover, the asymptotic variance is given by
  $$
    \sigma^2_{f}(\alpha) = \Norm{M}^{2}_{F} - \int_0^\infty e^{-2s}\Norm{[M, e^{-\alpha J s} ]}_F^2 \, ds + 2\,l\cdot(I + \alpha^2 J^\top J)^{-1}l,
  $$
where $\Norm{A}_F = \sqrt{\mbox{Tr}[AA^\top]}$ denotes the Frobenius norm of $A$, and $[A, B] = AB - BA$ is the commutator of $A$ and $B$.  In particular,
\begin{equation}
\label{eq:variance_bound_quadratic}
  \sigma^2_{f}(\alpha) \leq \sigma^2_{f}(0), \quad \forall \alpha\in\mathbb{R}.
\end{equation}
\end{proposition}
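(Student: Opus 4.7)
The plan is to exploit the fact that $\mathcal L$ preserves the space of polynomials of degree $\le 2$, so \eqref{eq:poisson_quadratic} admits a quadratic ansatz $\phi(x) = x^\top C x + D^\top x + e$ with $C = C^\top$. A direct computation gives
$$-\mathcal{L}\phi(x) = x^\top\bigl(AC + CA^\top\bigr)x + (AD)^\top x - 2\,\Tr(C),$$
with $A = (I + \alpha J)^\top = I - \alpha J$. Matching coefficients with $x^\top M x + l^\top x - \Tr(M)$ yields the Lyapunov equation $AC + CA^\top = M$, the linear equation $AD = l$, and, by tracing the Lyapunov equation and using symmetry of $C$ with antisymmetry of $J$, the identity $2\Tr(C) = \Tr(M)$ for free; the zero-mean condition $\pi(\phi) = 0$ then forces $e = -\Tr(C)$. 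Uniqueness of the solution in $L^2_0(\pi)$ is guaranteed by Lemma~\ref{lemm:function_space_well_posedness}.

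To solve the Lyapunov equation, I would observe that the eigenvalues of $\alpha J$ are purely imaginary, so those of $A$ have real part $1$; hence $e^{-As}$ decays exponentially, and differentiating $s \mapsto e^{-As}M e^{-A^\top s}$ shows that $\int_0^\infty e^{-As}M e^{-A^\top s}\,ds$ solves the equation, yielding the stated $C$ up to the overall normalisation. The algebraic identity that drives everything else is that $I$ and $J$ commute, so $e^{-As} = e^{-s}e^{\alpha Js}$ and $e^{-A^\top s} = e^{-s}e^{-\alpha Js}$, and $e^{\alpha Js}$ is orthogonal because $J^\top = -J$.

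For the variance, the cleanest route is the Kipnis--Varadhan expression $\sigma^2_f = 2\langle f, \phi\rangle_\pi$. Expanding the product and invoking Isserlis' formula for centred Gaussian fourth moments (the mixed linear--quadratic terms vanish by oddness) reduces the computation to an expression of the form $c_1\,\Tr(MC) + c_2\, l^\top A^{-1} l$ with explicit constants. Inserting the integral representation of $C$ and using the factorisation above gives
$$\Tr(MC) \,\propto\, \int_0^\infty e^{-2s}\,\Tr\bigl(M\,e^{\alpha Js}\,M\,e^{-\alpha Js}\bigr)\,ds,$$
and the commutator form then follows from the identity $\Norm{[M,U]}_F^2 = 2\Norm{M}_F^2 - 2\,\Tr(MUMU^{-1})$, valid for symmetric $M$ and orthogonal $U$, applied at $U = e^{\alpha Js}$. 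For the linear part, the relation $A + A^\top = 2I$ gives $A^{-1} + A^{-\top} = 2(AA^\top)^{-1}$; combined with $AA^\top = I + \alpha^2 J^\top J$ and the observation that the scalar $l^\top A^{-1} l$ equals $\tfrac12 l^\top(A^{-1} + A^{-\top})l$, this produces $l^\top A^{-1}l = l^\top(I + \alpha^2 J^\top J)^{-1} l$. The bound \eqref{eq:variance_bound_quadratic} then drops out: the commutator integrand is nonnegative and vanishes at $\alpha = 0$, while $(I + \alpha^2 J^\top J)^{-1} \preceq I$ with equality at $\alpha = 0$.

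The main obstacle is the trace/commutator bookkeeping needed to rewrite $\Tr(MC)$ in terms of $[M, e^{-\alpha Js}]$; it rests on cyclic invariance of the trace together with $(e^{-\alpha Js})^\top = e^{\alpha Js}$. Working with $\sigma^2_f = 2\langle f,\phi\rangle_\pi$ rather than the Dirichlet form $2\int \Norm{\nabla\phi}^2\,d\pi$ sidesteps a cumbersome double integral for $\Tr(C^2)$ that would otherwise require a change of variables $u = s+t$, $v = t-s$ before the commutator identity could be applied.
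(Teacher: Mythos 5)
Your proposal is correct and follows essentially the same route as the paper: quadratic ansatz, coefficient matching to a Lyapunov equation solved by the exponential integral, the Kipnis--Varadhan form $\sigma^2_f = 2\langle f,\phi\rangle_\pi$ with the Gaussian fourth-moment identity, the symmetrization $l^\top A^{-1}l = l^\top(I+\alpha^2 J^\top J)^{-1}l$, and the commutator rewriting of $\Tr(Me^{\alpha Js}Me^{-\alpha Js})$. The only cosmetic differences are that you impose $C=C^\top$ from the outset (the paper carries $C+C^\top$) and extract $2\Tr C = \Tr M$ from the trace of the Lyapunov equation rather than from the constant-term matching; both are equivalent.
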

\begin{proof}
Clearly $\phi(x)$ must also be a quadratic function of $x$, so we make the ansatz 
$$
  \phi(x) = x\cdot C x + D\cdot x - \Tr C.
$$
Plugging this into (\ref{eq:poisson_quadratic}) we obtain
$$
  x \cdot A\left((C + C^\top)x + D\right) - \Tr (C + C^\top) =  x\cdot M x + l\cdot x -\Tr M.
$$
Comparing equal powers of $x$ we have
\begin{subequations}
\begin{equation}
  \label{eq:o2_poisson}
  x\cdot A(C + C^\top)x = x\cdot M x,
\end{equation}
\begin{equation}
  \label{eq:o1_poisson}
  AD\cdot x = l\cdot x
\end{equation}
\begin{equation}
  \label{eq:o0_poisson}
  \Tr C = \frac{1}{2}\Tr M
\end{equation}
\end{subequations}
for all $x \in \mathbb{R}^d$.  Since $x$ is arbitrary, it follows that
$$
  D = A^{-1}l.
$$
Equation (\ref{eq:o2_poisson}) is a Lyapunov equation, which is well-posed as $M$ is positive definite and $\mbox{spec}(A) \subset \lbrace \lambda \in \mathbb{C} \, : \mbox{Re} \lambda >0 \rbrace $.  Indeed, for $C$ given by
$$
  C = \frac{1}{2}\int_0^\infty e^{-A s}M e^{-A^\top s}\,ds,
$$
both  (\ref{eq:o2_poisson}) and (\ref{eq:o0_poisson}) are satisfied.  The asymptotic variance $\sigma^2_f(\alpha)$ is then given by (see (\ref{eq:asympt_var1})),
\begin{equation}
\begin{aligned}
\frac{1}{2}\sigma^2_{f}(\alpha) = \langle \phi(x), f(x) \rangle_{\pi} = &\left\langle {x\cdot C x} + D\cdot x - \Tr C, {x\cdot M x} + l\cdot x - {\Tr M}\right\rangle_{\pi} \\
= &\langle {x\cdot C x}, x \cdot M x\rangle_\pi + \langle D\cdot x, l\cdot x \rangle_\pi\\
 & - \langle \Tr C, x\cdot M x\rangle_\pi - \langle x\cdot C x, \,\Tr M \rangle_\pi + \Tr C \Tr M.
\end{aligned}
\end{equation}
Using the fact that $C$ and $M$ are symmetric,
$$
\langle {x\cdot C x}, x \cdot M x\rangle_\pi  = \sum_{i,j,k,l} C_{ij}M_{kl}\mathbb{E}_\pi \left[x_i x_j x_k x_l\right] = \,\Tr C\,\Tr M + 2\,\Tr(CM^\top).
$$
Therefore,
\begin{equation}
\begin{aligned}
\frac{1}{2}\sigma^2_f(\alpha) = &2\,\Tr(CM^\top) + \langle D\cdot x, l\cdot x\rangle_\pi \\
        = &2\,\Tr(CM^\top) + D\cdot l\\
        = \label{eq:asympt_var_quadratic}& \,  \int_0^\infty \Tr\left[e^{-As}Me^{-A^\top s}M^\top\right]\,ds + l\cdot A^{-1} l.
\end{aligned}
\end{equation}
Since $A = I - \alpha J$, where $J$ is antisymmetric, we have that, for all $l \in \mathbb{R}^d$:
\begin{align*}
  l\cdot A^{-1} l = &\frac{1}{2}l\cdot\left[(I - \alpha J)^{-1} + (I + \alpha J)^{-1}\right]l \\
  = &\frac{1}{2}l\cdot\left[(I - \alpha J)^{-1}(I + \alpha J)^{-1}(I + \alpha J) + (I + \alpha J)^{-1}(I - \alpha J)^{-1}(I - \alpha J)\right]l \\
  = &l\cdot(I + \alpha^2 J^\top J)^{-1}l.
\end{align*}
Therefore, we obtain the following expression for $\sigma^2_{f}(\alpha)$: 
\begin{equation}
\label{eq:gaussian_asympt_var}
\sigma^2_{f}(\alpha) = 2\int_0^\infty e^{-2s}\Tr\left[e^{\alpha Js} M e^{-\alpha J s}M^\top\right]\,ds + 2l\cdot(I + \alpha^2 J^\top J)^{-1} l.
\end{equation}
Writing the first term as follows:
\begin{align*}
\int_0^\infty e^{-2s}\Tr[e^{\alpha J s}M e^{-\alpha J s}M^\top]\,ds =  \frac{1}{2}\lVert M \rVert^2_F - \frac{1}{2}\int_0^\infty e^{-2s}\left\lVert \left[M, e^{-\alpha J s} \right]\right\rVert^2_F \,ds.
\end{align*}
From this, and the facts that $J^\top J \geq 0$ and $[M, I] = 0$, the inequality (\ref{eq:variance_bound_quadratic}) follows.
\qed
\end{proof}

Since $ J$ is skew-symmetric,  the matrix exponential $e^{\alpha J s}$ is a rotation matrix.  Thus, the matrix 
$e^{\alpha J s} M e^{-\alpha J s}$ has the same eigenvalues as $M$ and $M^\top$.  From \cite[ III.6.14]{bhatia1997matrix} we have
$$
  \lambda^{\downarrow}(M)\cdot\lambda^{\uparrow}(M) =  \lambda^{\downarrow}(e^{\alpha J s}M\,e^{-\alpha J s})\cdot\lambda^{\uparrow}(M^\top) \leq \Tr\left[e^{\alpha J s} M e^{-\alpha J s}M^\top\right],
$$
where $\lambda^{\uparrow}(M)$ and $\lambda^{\downarrow}(M)$ denote the vectors of eigenvalues of $M$, sorted in ascending and descending order, respectively.  In particular,
\begin{equation}
\label{eq:var_bound1}
   \int_0^\infty e^{-2s}\Tr\left[e^{\alpha Js} M e^{-\alpha J s}M^\top\right]\,ds  \geq \frac{1}{2}\lambda^{\downarrow}(M)\cdot\lambda^{\uparrow}(M).
\end{equation}
On the other hand, let $\mathcal{N} = \mathcal{N}[J] = \mathcal{N}[J^\top J]$, and diagonalising $J^\top J$ we obtain
$$
J^\top J = U^\top D U, \quad \mbox{ where } D = \left[\begin{array}{c|c}\mathbf{0} & \mathbf{0} \\ \hline \mathbf{0} & \alpha\mathbf{\Lambda}\end{array}\right], \quad \mbox{ for } \mathbf{\Lambda} = \mbox{diag}\left(\lambda_1, \ldots, \lambda_{d-N}\right).
$$
where $U$ is a $d\times d$ orthogonal matrix with the first $N$ columns spanning $\mathcal{N}$.  Then
$$
   l\cdot(I + \alpha^2 J^\top J)^{-1} l= Ul\cdot\left[\begin{array}{c|c}\mathbf{I} & \mathbf{0} \\ \hline \mathbf{0} & (\mathbf{I} + \alpha\mathbf{\Lambda})^{-1}\end{array}\right] Ul \xrightarrow{\alpha\rightarrow\pm \infty} Ul\cdot\left[\begin{array}{c|c}\mathbf{I} & \mathbf{0} \\ \hline \mathbf{0} & \mathbf{0}\end{array}\right] Ul = \norm{l_{\mathcal{N}}}^2,
$$
where $l_{\cN}$ is the projection of $l$ onto $\cN$.  Thus, for quadratic observables, from (\ref{eq:gaussian_asympt_var}), the asymptotic variance  has the following lower bound in the limit of large $\alpha$:
\begin{equation}
\label{eq:var_bound2}
  \lim_{\alpha\rightarrow \infty}\sigma^2_f(\alpha)  \geq \underline{\sigma}^2_{f} := \lambda^{\downarrow}(M)\cdot\lambda^{\uparrow}(M) + 2\norm{l_{\cN}}^2,
\end{equation}
In particular, for diffusions with linear drift, the asymptotic variance cannot be decreased arbitrarily.  The worst case scenario is when $f(x) = m\left(\norm{x}^2 - d\right)$, for some $m > 0$, for which the asymptotic variance will not be decreased, for any antisymmetric matrix $J$ and $\alpha \in \mathbb{R}$, analogous to the situation which occurs in \cite{lelievre2013optimal} for maximising the spectral gap, when all the eigenvalues of the covariance matrix are equal.
\\\\
\subsection{Finding the Optimal Perturbation}
We first focus on the case when the observable is linear, i.e. $f(x) = l\cdot x$, similar to that considered in \cite[Section 4.2]{pavliotis2010asymptotic},  so that $\sigma^2_f = 2l\cdot(I + \alpha^2 J^\top J)^{-1} l$.    Suppose that we fix $\alpha \in \mathbb{R}$, and wish to choose $J$ which minimizes the asymptotic variance subject to $\Norm{J}_F = 1$.  In this case, we have the equality $$\lim_{\alpha\rightarrow\pm\infty}\sigma^2_f = 2\norm{l_{\mathcal{N}}}^2,$$ which is optimal if $l$ is orthogonal to $\mathcal{N}$.  Thus, the best we can do is to choose $J$ such that $l$ is an eigenvector of $J^\top J$ with maximal eigenvalue.  This can be done by choosing a unit vector $\omega \in \mathbb{R}^d$ orthogonal to $l$ and setting
$$
  J = \frac{\left(\tilde{l}\otimes \omega -  \omega \otimes \tilde{l}\right)}{\sqrt{2}},
$$
where $\tilde{l} = l/|l|$
Then,
$$
  J^* J = \frac{1}{2}\left(-\tilde{l} \otimes \omega +  \omega \otimes \tilde{l}\right)\left(\tilde{l} \otimes \omega -  \omega \otimes \tilde{l}\right) = \frac{\tilde{l}\otimes \tilde{l} + \omega \otimes \omega}{2},
$$
is a projector onto $\lbrace {l}, \omega\rbrace$ and $\Norm{J}_{F} = 1$, where $\Norm{\cdot}_{F}$ is the Frobenius norm with respect to the Euclidean basis. In this case 
$$
  \sigma^2_f(\alpha) = 2l\cdot(I + \alpha^2 J^\top J)^{-1}l = \frac{4\norm{l}^2}{2 + \alpha^2}.
$$
We note in addition that, when minimising the asymptotic variance, there are many antisymmetric matrices $J$ which give the minimal asymptotic variance.  As an example, let $d = 3$, consider the observables $f(x) = l^{(i)}\cdot x$ for  $l^{(1)} = (0, 1, 1)^\top/\sqrt{2}$ and $l^{(2)} = (1, 0, 1)^{\top}/\sqrt{2}$ and $l^{(3)} = (1, -1, 1)^{\top}/\sqrt{3}$, respectively.  We choose $J$ to be 
\begin{equation}
\label{eq:J_linear}
  J = \frac{1}{\sqrt{6}}\left(\begin{matrix} 0 & 1 & 1 \\ -1 & 0 & 1 \\ -1 & -1 & 0\end{matrix}\right).
\end{equation}
In this case 
$$
\sigma^2_{f}(\alpha) = \frac{2}{6 + 3\alpha^2}l\cdot\left(\begin{matrix}6 + \alpha^2 & -\alpha^2 & \alpha^2 \\ -\alpha^2 & 6 + \alpha^2 & -\alpha^2 \\ \alpha^2 & -\alpha^2 & 6 + \alpha^2\end{matrix}\right)l.
$$
The decay of $\sigma^2_{f}(\alpha)$ as $\alpha\rightarrow \infty$ is strongly dependent on the nullspace of $J$, given by
$$ \mathcal{N} = \mbox{span}[\zeta], \quad \mbox{ where } \zeta = (1, -1, 1).$$  In Figure \ref{fig:linear_variance} we plot the asymptotic variance for these observables.   For $f(x) = l^{(1)}\cdot x$, see that as $\alpha \rightarrow \infty$ the asymptotic variance converges to $0$. This is due to the fact that $l^{(1)}$ is perpendicular to the nullspace of $J$. Thus, for this observable the matrix $J$ given by (\ref{eq:J_linear}) is an optimal perturbation.  For $f(x) = l^{(2)}\cdot x$, since $l^{(2)}$ is not orthogonal to $\zeta$, as $\alpha \rightarrow \infty$,
$$
  \sigma^2_{f}(\alpha) \rightarrow 2\norm{l_{\cN}^{(2)}}^2 =   \frac{4}{3}.
$$
Finally, for $f(x) = l^{(3)}\cdot x$ we observe that the asymptotic variance remains constant at $2$, since $l^{(3)} \in \mathcal{N}$, the nonreversible perturbation has no effect on this observable.
\\\\
\begin{figure}[ht]
\begin{subfigure}[b]{0.5\textwidth}
                \includegraphics[width=\textwidth]{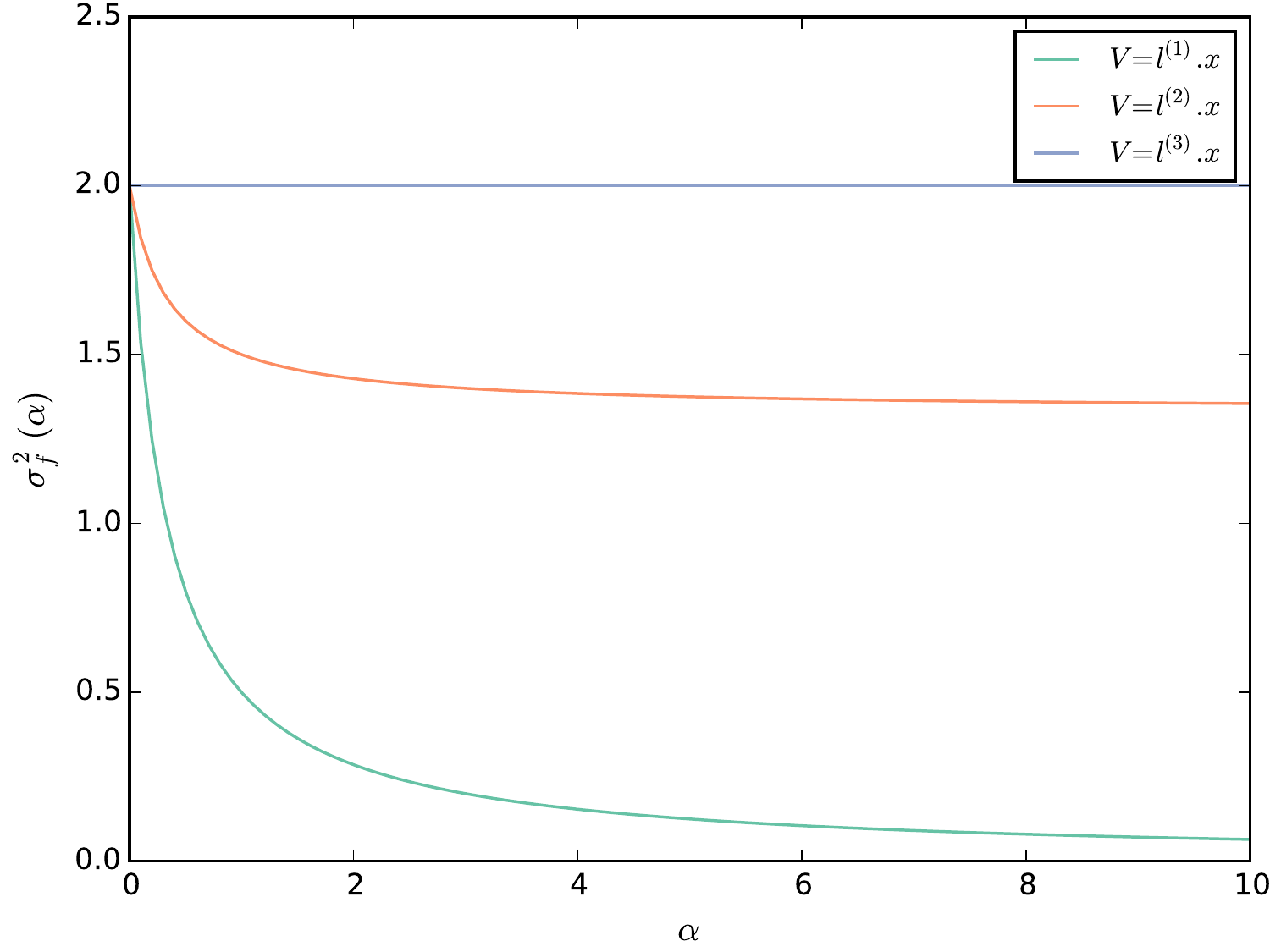}
                \caption{\label{fig:linear_variance}Linear observable}
        \end{subfigure}%
      ~
      \begin{subfigure}[b]{0.5\textwidth}
                \includegraphics[width=\textwidth]{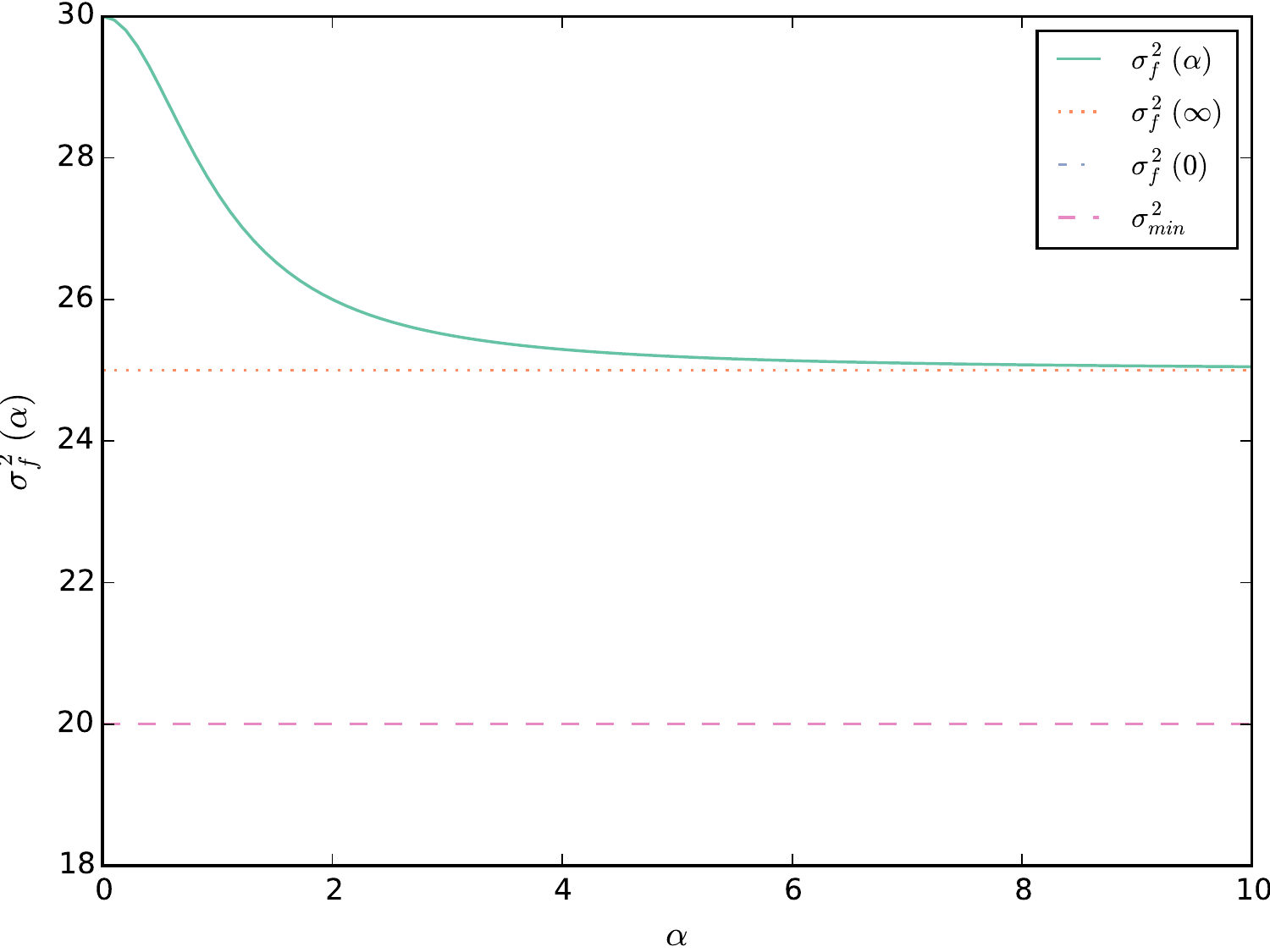}
                \caption{\label{fig:quadratic_variance}Quadratic observable}
        \end{subfigure}%
\caption{The asymptotic variance for a linear diffusion $X_t$ given in (\ref{eq:sde_linear}) for linear observables $f(x) = l^{(i)}\cdot x$, $i= 1,2,3$ (left), and for a quadratic observable $f(x) = x\cdot M_1 x$ (right).}
\label{fig:optimal_variances}
\end{figure}

We now focus on the case when $l = 0$ so that $f(x) = x\cdot M x - \Tr(M)$.  In this case, it is not clear how to construct an optimal $J$, however the bound (\ref{eq:var_bound2}) suggests a good candidate for $J$.  Suppose that $M$ has eigenvalues $\lambda_1 \leq \lambda_2 \leq \ldots \leq\lambda_d$ with corresponding eigenvectors $e_1, \ldots, e_d$.  Suppose that $d$ is even.  Let 
$$\lbrace i_1, j_1 \rbrace, \ldots ,\lbrace i_{ d/2 }, j_{ d/2 } \rbrace,$$
be a partition of $1, \ldots d$ into disjoint pairs.  Define the antisymmetric matrix $J$ by
\begin{equation}
\label{eq:J_quasi_opt}
J = \sum_{k=1}^{d/2}e_{i_k}\otimes e_{j_k} - e_{j_k}\otimes e_{i_k}, \quad k =1, \ldots, \frac{d}{2}.
\end{equation}
In this case $e^{\alpha Js}$ can be decomposed into a product of rotations between the pairs of eigenvectors:
$$
e^{\alpha J s} = \prod_{k=1}^{d/2} R_{e_{i_k}, e_{j_k}}(\alpha s),
$$
where $R_{v, w}(\theta)$ is an anticlockwise rotation of angle $\theta$ in the $\lbrace v, w \rbrace$ plane.  Then
\begin{align*}
\int_{0}^\infty e^{-2s}\Tr[e^{\alpha J s}M e^{-\alpha J s}M^\top]\,ds = &\frac{1}{4(1+\alpha^2)}\sum_{k=1}^{d/2}(\lambda_{i_k} - \lambda_{j_k})^2 + \frac{1}{4}\sum_{k=1}^{d/2}(\lambda_{i_k} + \lambda_{j_k})^2.
\end{align*}
For this choice of $J$, the asymptotic variance is given by
$$
\lim_{\alpha \rightarrow \pm\infty}\sigma^2_{f}(\alpha) = \frac{1}{2}\sum_{k=1}^{d/2}(\lambda_{i_k} + \lambda_{j_k})^2.
$$
Arguing by contradiction, this is minimized by choosing $i_k = k$,  and $j_k = (d - k + 1)$ for $k = 1,\ldots  ,d/2 $, in which case
\begin{align*}
\lim_{\alpha \rightarrow \pm\infty}\sigma^2_{f}(\alpha) = &\frac{1}{2}\left[(\lambda_1 + \lambda_d)^2 + (\lambda_2 + \lambda_{d-1})^2 + \ldots + (\lambda_{ d/2 } + \lambda_{d/2 + 1})^2  \right] \\
 = &\frac{1}{2}\lambda^{\downarrow}(M)\cdot\lambda^{\uparrow}(M) + \frac{1}{2}\sum_{k=1}^d \lambda_k^2.
\end{align*}
Clearly,
$$
  \lambda^{\downarrow}(M)\cdot\lambda^{\uparrow}(M) \leq \frac{1}{2}\lambda^{\downarrow}(M)\cdot\lambda^{\uparrow}(M) + \frac{1}{2}\sum_{k=1}^d \lambda_k^2 \leq \Tr(M^2),
$$
however these bounds are only tight  when  $M = m\, I$, for $m > 0$.
\\\\ 
As an example, on Figure \ref{fig:quadratic_variance} we plot the asymptotic variance for a quadratic observable $x\cdot M_1 x$ for a linear diffusion (\ref{eq:sde_linear}), where $d=4$.  The matrix $M_1$ is given by
$$
  M_1 = \left(\begin{array}{cccc} 3/2 & -1/2 & 0 & 0 \\ -1/2 & 3/2 & 0 & 0 \\ 0 & 0 & 7/2 & -1/2 \\ 0 & 0 & -1/2 & 7/2\end{array}\right),
$$
with eigenvalues $\lambda_i = i$, for $i=1,\ldots 4$.  When $\alpha = 0$, the asymptotic variance is $\sigma^2_f(0) = 30$.  The lower bound (\ref{eq:variance_bound_quadratic}) is given by $\underline{\sigma}^2_f = 2\left(\lambda_1\lambda_4 + \lambda_2\lambda_3\right) = 20$.  We choose the antisymmetric matrix $J:\mathbb{R}^{4\times 4}$ as in (\ref{eq:J_quasi_opt}), that is
$$
  J = \frac{1}{2}\left(\begin{array}{cccc} 0 & 0 & 1 & 0 \\ 0 & 0 & 0 & -1 \\ -1 & 0 & 0 & 0 \\ 0 & 1 & 0 & 0\end{array}\right).
$$
The resulting asymptotic variance $\sigma^2_{f}(\alpha)$ is plotted as a function of $\alpha$ in Figure \ref{fig:linear_variance} which converges to $\frac{1}{2}\left((\lambda_1 + \lambda_4)^2 + (\lambda_2 + \lambda_3)^2\right) = 25$ as $\alpha \rightarrow \infty$.

 \section{Numerical Experiments}
\label{sec:numerics}

In this section we present three numerical examples illustrating the effects of the antisymmetric perturbation on the asymptotic variance $\sigma^2_f$.  We will consider target measures defined by Gibbs distributions of the form:
\begin{equation}\label{e:gibbs}
\pi(x) = \frac{1}{Z} e^{-\beta V(x)},
\end{equation}
where $V(x)$ is a given smooth, confining potential with finite unknown normalisation constant  $Z$.  We will also assume that the divergence-free vector field $\gamma(x)$ is given by
\begin{equation}\label{e:drift_decomp}
\gamma(x) = -\alpha J \nabla V(x), \quad J = - J^\top,
\end{equation}
so that the drift of (\ref{e:nonreversible}) will be given by
$$
b(x) = -(\beta I + \alpha J)\nabla V(x).
$$
\\
The symmetric and antisymmetric parts of the generator in $\clr$ are, respectively,
$$
\cS = -\beta \nabla V\cdot \nabla +  \Delta, \quad \cA = J \nabla V \cdot \nabla.
$$
Provided that $V \in H^1(\pi)$ the nullspace of the antisymmetric part of the generator is always nonempty. Indeed, the antisymmetry of $\cA$ implies that $\cA V = 0$.  Thus, for any observable $f$ such that $\pi(f) = 0$ and
$$0 \neq \langle f, V\rangle_\pi,$$
 since $\langle \hat{f}, V\rangle_1 = \langle (-\mathcal{S})^{-1} f, (-\mathcal{S})V \rangle_{\pi} = \langle f, V \rangle_{\pi}$
the conclusion of Theorem \ref{thm:deff_expansion} implies the asymptotic variance $\sigma^2_f(\alpha)$ will converge to a nonzero constant as $\alpha\rightarrow \infty$.

\subsection{Periodic Distribution}
In the first example we consider a two-dimensional target density given by (\ref{e:gibbs}) where the periodic potential $V(x)$ is given by
\begin{equation}
\label{eq:example_pot_1}
  V(x) = \sin(2\pi x_1)\cos(2\pi x_2), \qquad x = (x_1, x_2) \in \T^2,
\end{equation}
and $\beta^{-1} = 0.1$.  Our objective is to calculate the expectation $\pi(f)$ of the observable 
\begin{equation}
\label{eq:periodic_observable}
  f(x) = 1 + 4\sin(4\pi x_1)^2 + 4\cos(4\pi x_2)^2.
\end{equation}
In Figure \ref{fig:periodic1} we plot the asymptotic variance over $\alpha \in [0, 12]$.  The stepsize $\Delta t$ for the Euler--Maruyama discretisation of (\ref{e:nonreversible}) is chosen to be $10^{-3}$.  For each $\alpha$, $M = 10^3$ independent realisations of the Markov process are run, starting from $X_0 = (0, 0)$, each for $T = 10^5$ time units to ensure that the process is close to stationarity.  We observe that increasing $\alpha$ from $0$ to $10$ decreases the asymptotic variance by approximately two orders of magnitude.  In Figure \ref{fig:periodic2} we plot the value of the estimator along with the confidence intervals estimated over $10^3$ independent realisations.  For this particular example, it appears that increasing the magnitude of the nonreversible perturbation does not appear to give rise to any noticeable increase in bias, while it significantly decreases the asymptotic variance,  giving rise to a dramatic increase in performance of the estimator $\pi_T(f)$.

\begin{figure}[ht]
\begin{subfigure}[b]{0.5\textwidth}
                \includegraphics[width=\textwidth]{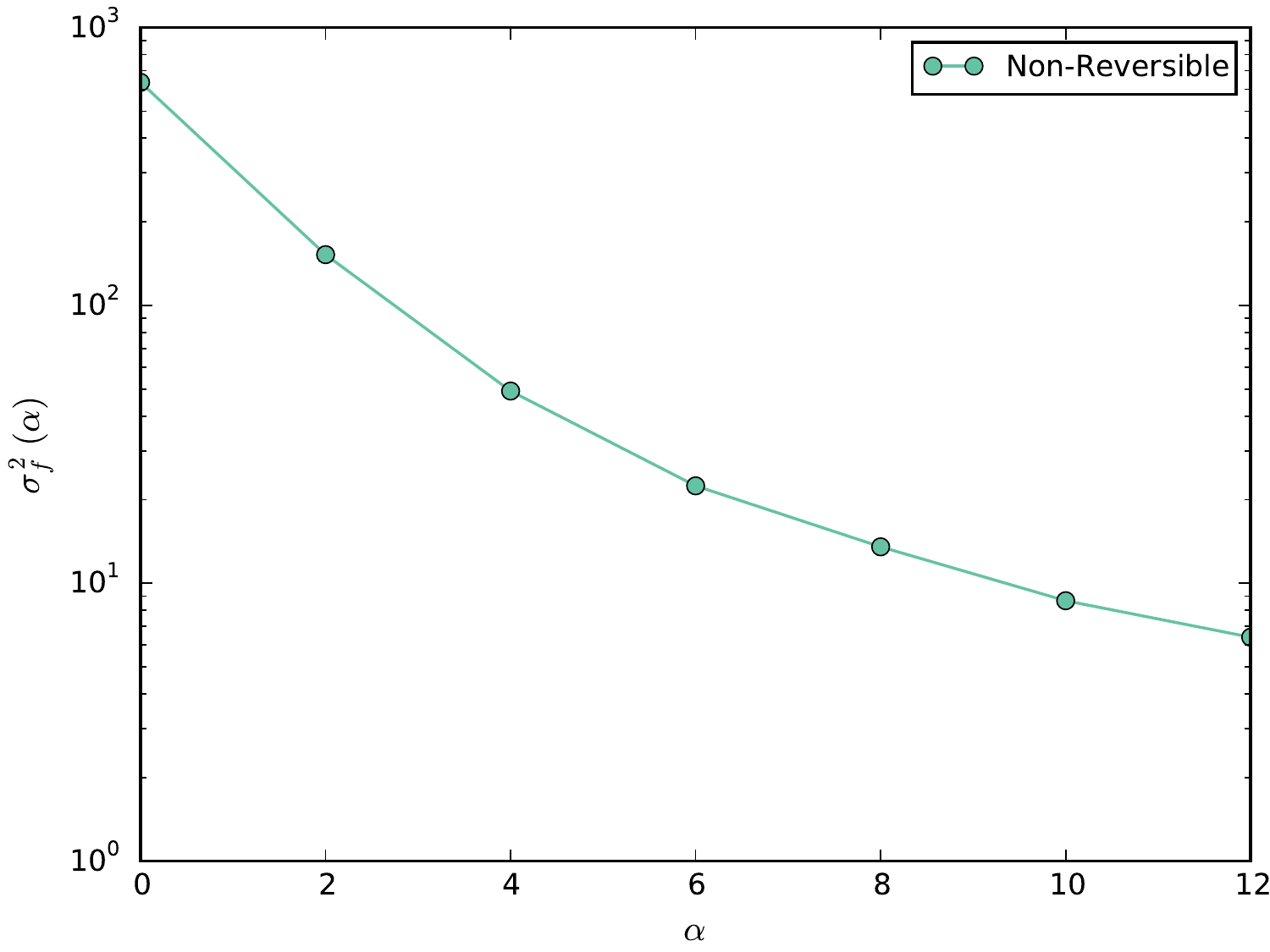}
                \caption{\label{fig:periodic1}Asymptotic Variance $\sigma^2_f(\alpha)$}
        \end{subfigure}%
      ~
      \begin{subfigure}[b]{0.5\textwidth}
                \includegraphics[width=\textwidth]{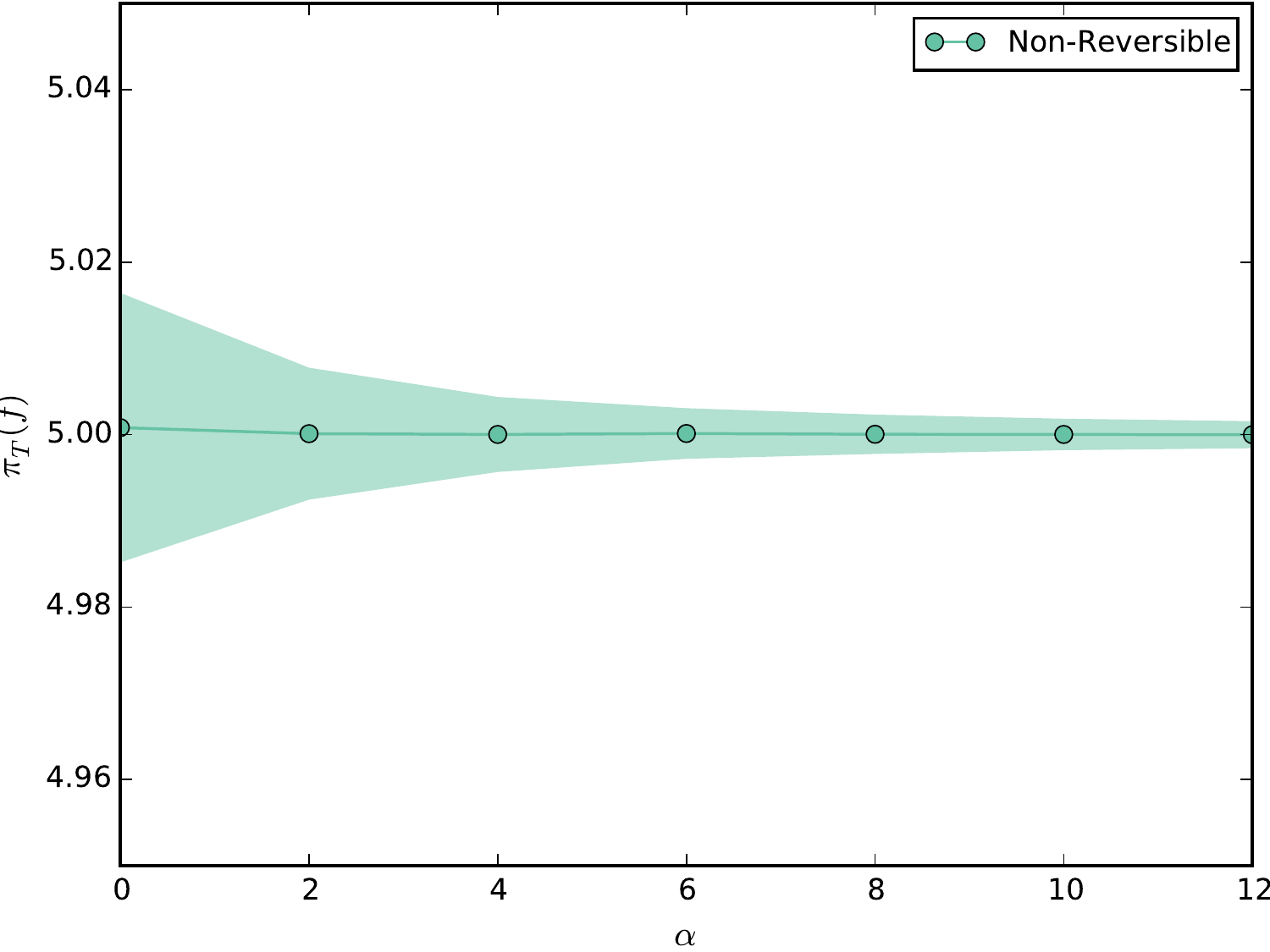}
                \caption{\label{fig:periodic2}Estimator $\pi_T(f)$}
        \end{subfigure}%
\caption{The asymptotic variance and value of the estimator $\pi_T(f)$, where $T = 10^5$, for the target Gibbs distribution with potential defined by (\ref{eq:example_pot_1}) and observable as in (\ref{eq:periodic_observable}).  The plot was generated from $10^3$ independent realisations of an Euler-Maruyama discretisation of the nonreversible diffusion (\ref{e:nonreversible}), with stepsize $10^{-3}$ over $T = 10^{5}$ time-units.  The shaded region in Figure \ref{fig:periodic2} indicates the $95\%$--confidence interval of the estimator for the given $\alpha$.}
\label{fig:periodic_stats}
\end{figure}
%
%
\subsection{Warped Gaussian Distribution}
\label{section:warped}
As a second numerical example we consider computing an observable with respect to a two-dimensional warped Gaussian distribution \cite{haario1999adaptive}, defined by (\ref{e:gibbs}) with
\begin{equation}
\label{eq:warped_potential}
V(x) = \frac{x_1^2}{100} + (x_2 + bx_1^2 - 100b)^2.
\end{equation}
The  parameter $b > 0$ is chosen to be $b = 0.05$.  The potental $V(x)$ is plotted in Figure \ref{fig:warped}.  Our objective is to compute $\pi(f)$ where the observable $f$ is given by:
$$
  f(x) =  x_1^2 + x_2^2.
$$
\begin{figure}[ht]
\begin{center}
\includegraphics[width=0.75\textwidth]{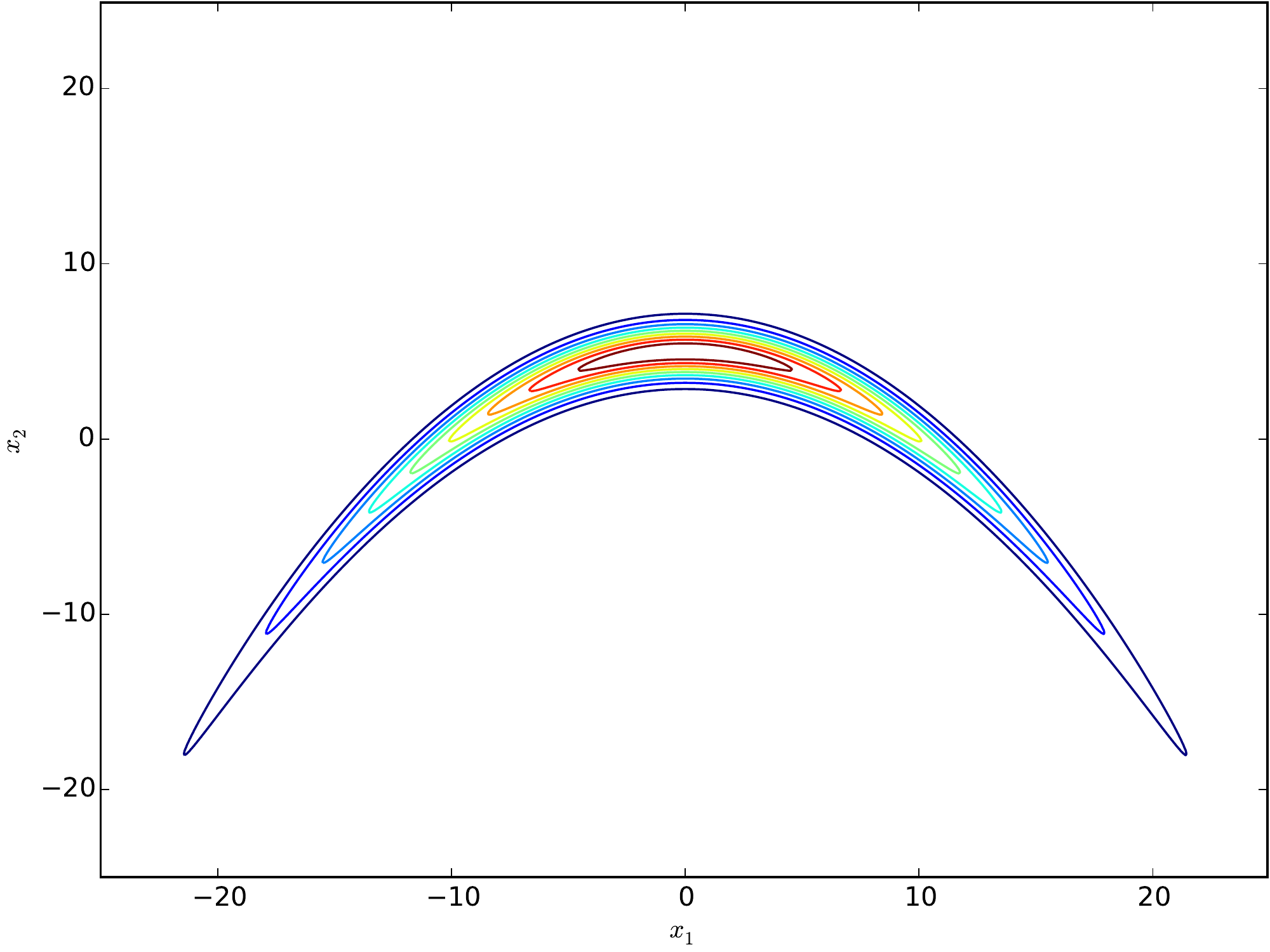}
\end{center}
\caption{Contour plot of the warped Gaussian distribution with potential (\ref{eq:warped_potential}).}
\label{fig:warped} 
\end{figure}
The nonreversible perturbation is chosen to be:
\begin{equation}
\label{eq:nonreversible_drift_warped}
 \gamma(x) = -J\nabla V(x), \quad \mbox{ where } J = \left(\begin{matrix} 0 & 1 \\ -1 & 0 \end{matrix}\right).
\end{equation}
In Figure \ref{fig:warped1} we plot the asymptotic variance of the estimator $\pi_T(f)$, approximated from $10^3$ independent realisations of the process, each run for $10^8$ timesteps of size $10^{-3}$ so that the total time is $T = 10^5$.  Each realisation was started at $(0,0)$.  As expected, we observe a significant decrease in asymptotic variance as the magnitude of the nonreversible perturbation is increased.  Increasing $\alpha$ from $0$ to $10$ gives rise to a decrease in variance by a factor of $80$.    In Figure \ref{fig:warped2} we plot the value of the estimator $\pi_T$ averaged over the $10^3$ realiations, with corresponding confidence intervals.  As $\alpha$ is increased, the bias arising from the discretisation error also increases.  To mitigate this bias one could decrease the stepsize, thus requiring more steps to generate $\pi_T(f)$ or otherwise introduce a Metropolis-Hastings accept-reject step, which will be discussed in  section \ref{subsec:MH}.  The tradeoff between bias and variance is considered in more detail in Section \ref{sec:comp_cost}.

\begin{figure}[ht]
\begin{subfigure}[b]{0.5\textwidth}
                \includegraphics[width=\textwidth]{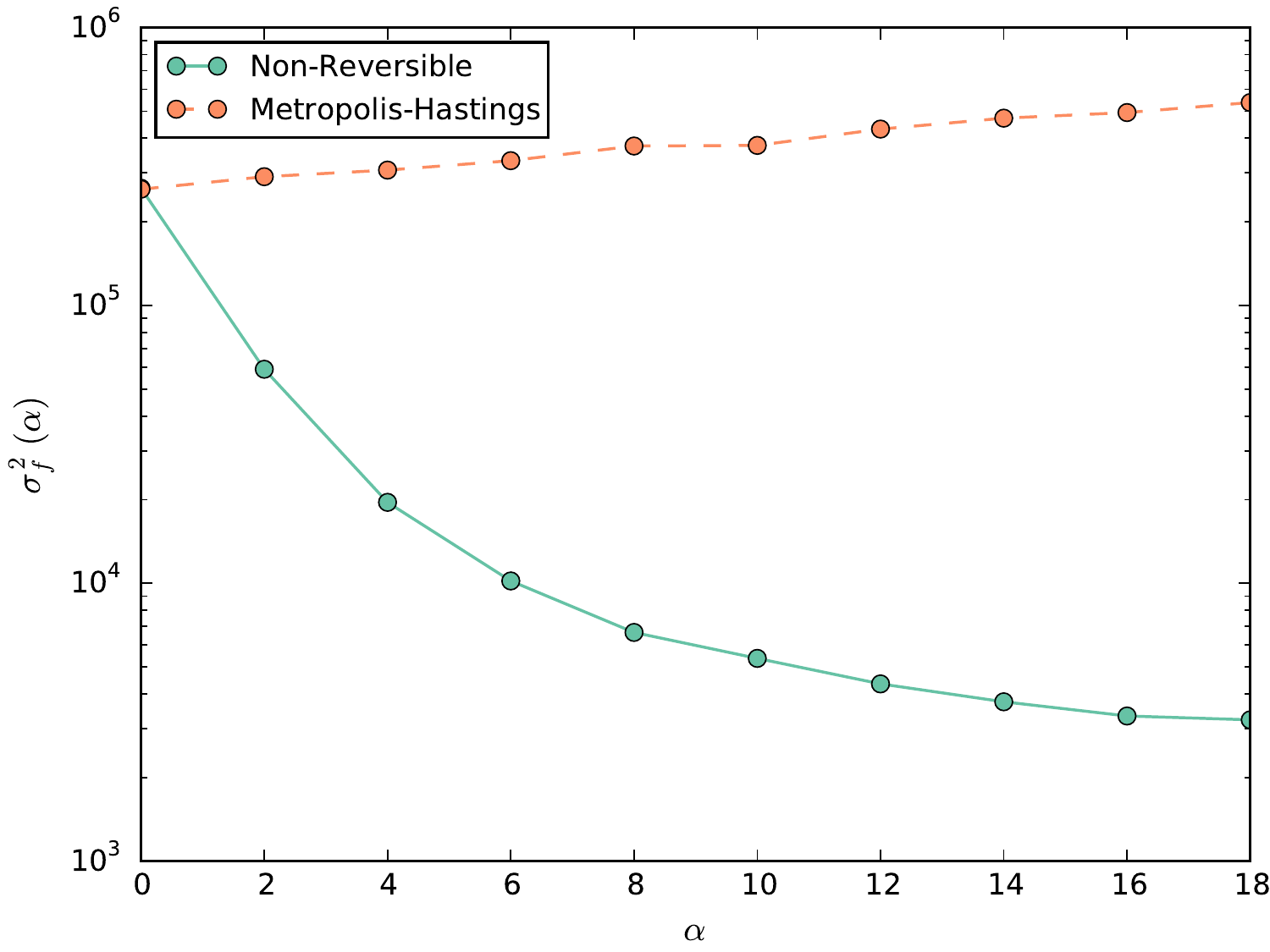}
                \caption{\label{fig:warped1}Asymptotic Variance $\sigma^2_f(\alpha)$}
        \end{subfigure}%
      ~
      \begin{subfigure}[b]{0.5\textwidth}
                \includegraphics[width=\textwidth]{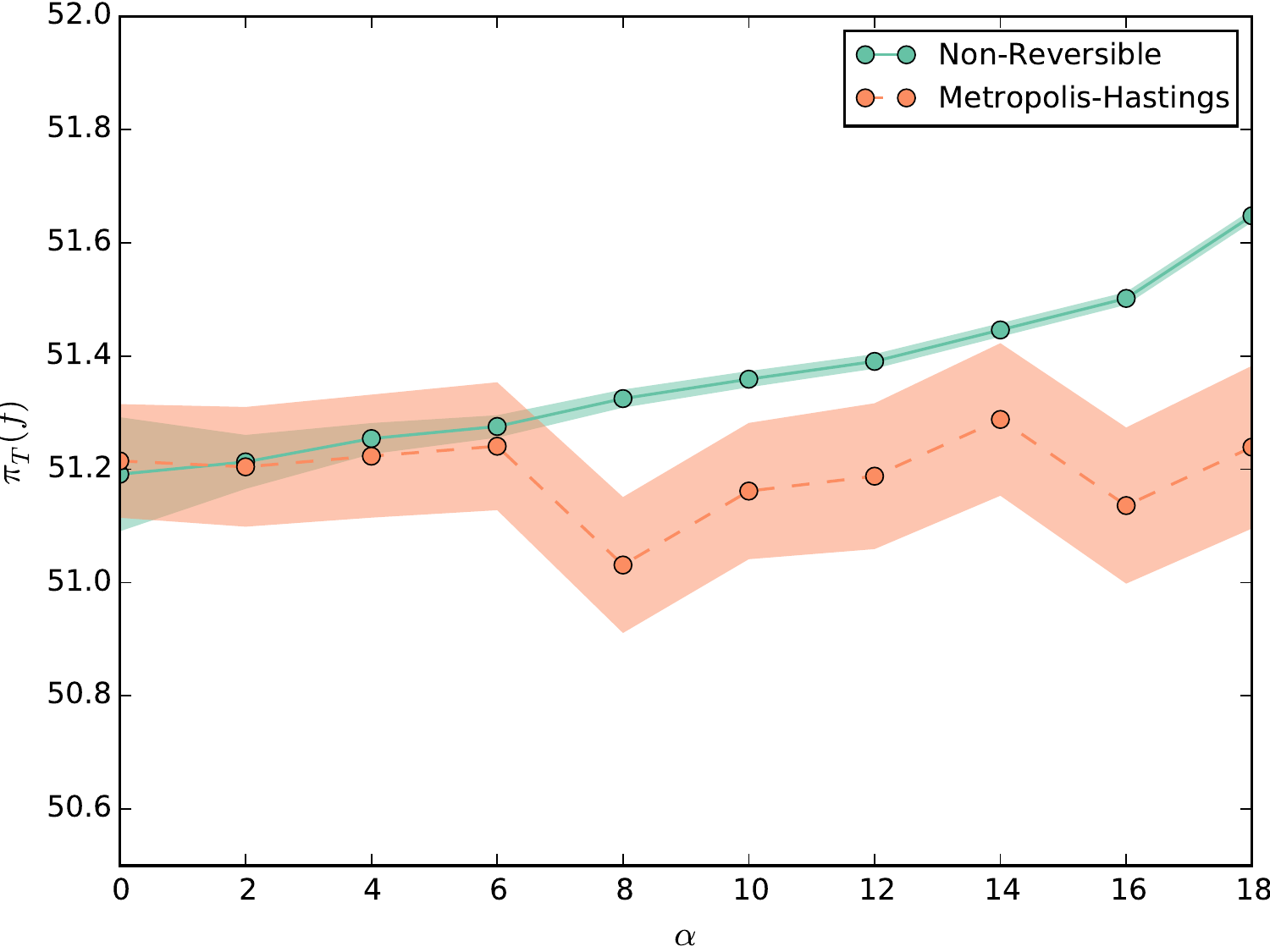}
                \caption{\label{fig:warped2}Estimator $\pi_T(f)$}
        \end{subfigure}%
\caption{Plot of the asymptotic variance $\sigma^2_f(\alpha)$ and the estimator $\pi_T(f)$ for the warped Gaussian distribution $\pi$ defined by (\ref{eq:warped_potential}) and the observable $f(x)= |x|^2$ for different values of $\alpha$. The solid line was generated from $10^3$ independent realisations of an Euler-Maruyama discretisation of the nonreversible diffusion (\ref{e:nonreversible}), with stepsize $10^{-3}$ over $T = 10^{5}$ time-units.  The shaded region in Figure \ref{fig:warped2} indicates the confidence interval of the estimator for the given $\alpha$. The dashed lined denotes the variance and mean when the chain is augmented with a accept/reject step, see Section \ref{subsec:MH}.}
\label{fig:warped_stats}
\end{figure}


\subsection{Introducing a Metropolis--Hastings accept-reject step}
\label{subsec:MH}

When sampling from a distribution $\pi$  it is natural to introduce a Metropolis--Hastings (MH) accept-reject step, rather than use the Markov chain obtained directly from an Euler--Maruyama discretization of the SDE (\ref{e:nonreversible}).  Given a current state $X^{(n)}$, a next state is proposed according to the Euler--Maruyama discretisation of (\ref{e:nonreversible}):
\begin{equation}
\label{eq:mh_proposal}
  \tilde{X} \sim \mathcal{N}\left(X^{(n)} + \Delta t  \nabla \log\pi(X^{(n)}), \Delta t\right), 
\end{equation}
The proposed state is then accepted  (i.e. $X^{(n+1)} := \tilde{X}$) with probability
$$
  r(X^{(n)}, \tilde{X}) = 1 \wedge \frac{\pi(\tilde{X})p(X^{(n)}, \tilde{X})}{\pi(X^{(n)})p(\tilde{X}, X^{(n)})},
$$
where $p(\cdot, x)$ is the proposal density corresponding to (\ref{eq:mh_proposal}).   By introducing this accept-reject mechanism, the resulting chain $X^{(n)}$ is guaranteed to have a stationary distribution which is {exactly} $\pi$, independently of $\Delta t$.  Thus, introducing a Metropolis-Hastings accept-reject step allows for far larger stepsizes to be used,  while still preserving the correct invariant distribution of the chain, eliminating any bias arising from the discretisation of the SDE, making it  beneficial both in terms of computational performance and stability.
\\\\
A natural question is whether an MH chain using a proposal distribution based on the SDE (\ref{e:nonreversible}) with antisymmetric drift will inherit the superior mixing properties of the nonreversible diffusion process.  As the MH algorithm works by enforcing the detailed balance of the chain $X^{(n)}$ with respect to the distribution $\pi$, we expect that any benefits of the antisymmetric drift term will be negated when introducing this accept-reject step.  To test this, we repeat the numerical experiment of Section \ref{section:warped} using the MH algorithm using the Euler discretisation of (\ref{e:nonreversible}) as a proposal scheme,  for various values of $\alpha$.  The effect of introducing this accept-reject step to the nonreversible diffusion is evident from Figure \ref{fig:warped_stats}. While the accept-reject step removes any bias due to discretisation error, as is evident from Figure \ref{fig:warped2}, the asymptotic variance actually increases as $\alpha$ increases.  This is due to the fact that for large $\alpha$, proposals are more likely to be rejected as they are far away from the current state.

\subsection{Dimer in a Solvent}
\label{subsec:dimer}
We now test the effect of adding a nonreversible perturbation to a test model from molecular dynamics, as described in \cite[Section 1.3.2.4]{lelievre2010free}.  We consider a system composed of $N$ particles $P_1, \ldots, P_N$ in a two-dimensional periodic box of side length $L$.  Particles $P_1$ and $P_2$ are assumed to form a dimer pair, in a solvent comprising the particles $P_3, \ldots, P_N$.  The solvent particles interact through a truncated Lennard-Jones potential:
$$
  V_{WCA}(r) = \begin{cases} 4\epsilon\left[\left(\frac{\sigma}{r}\right)^{12} - \left(\frac{\sigma}{r}\right)^6\right] + \epsilon, &\mbox{ if } r \leq r_0, \\ 0 &\mbox{ if } r > r_0,\end{cases}
$$
where $r$ is the distance between two particles, $\epsilon$ and $\sigma$ are two positive parameters, and $r_0 = 2^{\frac{1}{6}}\sigma$.  The interaction potential between the dimer pair is given by a double-well potential
\begin{equation}
  V_S(r) = h\left[1 - \frac{(r - r_0 - w)^2}{w^2}\right]^2,
\end{equation}
where $h$ and $w$ are two positive parameters.  The total energy of the system is given by
\begin{equation}
  V(q) = V_S(|q_1 - q_2|) + \sum_{3 \leq i < j \leq N} V_{WCA}(|q_i - q_j|) + \sum_{i=1,2}\sum_{3 \leq j \leq N} V_{WCA}(|q_i - q_j|),
\end{equation}
with $q = (q_1,\ldots, q_N) \in (L\mathbb{T})^{dN}$, where $q_i$ denotes the position of particle $P_i$.  The potential $V_S$ has two energy minima at $r = r_0$ (corresponding to the compact state), and at $r = r_0 + 2w$ (corresponding to the stretched state).  The energy barrier separating the two states is $h$.  Define $\xi(q)$ to be
$$
  \xi(q) = \frac{|q_1 - q_2| - r_0}{2w}.
$$
This reaction coordinate describes the transition from the compact state, $\xi(q) = 0$ to the stretched state $\xi(q) = 1$.
The standard overdamped reversible dynamics to sample from the stationary distribution $\pi(q)\propto \exp(-\beta V(q))$ is given by
$$
  dq_t = -\nabla V(q_t)\,dt  + \sqrt{2\beta^{-1}}\,dW_t,
$$
where $\beta$ is the inverse temperature.  Our objective is to compute the average reaction coordinate $\mathbb{E}_\pi \xi(q)$. We introduce an antisymmetric drift term as follows
\begin{equation}
\label{eq:dimer_sde_antisymmetric}
  dq_t = -(I + \alpha J)\nabla V(q_t)\,dt + \sqrt{2\beta^{-1}}\,dW_t,
\end{equation}
where $J \in \mathbb{R}^{2N\times 2N}$ is an antisymmetric matrix.  We consider two types of nonreversible perturbations, determined by antisymmetric matrices, $J_1$ and $J_2$.  The first matrix $J_1$ is the block-circulant matrix defined by:
$$
J_1=\left(
\begin{array}{cccccc}
 {O}_{2}    & {I}_{2} & \cdots  & {O}_{2} & -{I}_{2}   \\
 - {I}_{2}    & {O}_{2} &  {I}_{2}    &   & {O}_{2}   \\
  \vdots    & - {I}_{2} & {O}_{2}    & \ddots  & \vdots   \\
 {O}_{2}    &  &  \ddots    &   \ddots &  {I}_{2}   \\
  {I}_{2}    &  {O}_{2} &  \ldots    &   - {I}_{2} & {O}_{2},
\end{array},
\right),
$$
where $I_{2}$ and $O_{2}$ denote the $2\times 2$ identity and zero matrix, respectively.  The second matrix is given by
$$
J_2 = \left(\begin{matrix}R & O_4 & \ldots & O_4 \\ O_4 & O_4  & \ldots & O_4 \\ \vdots & \vdots & \ddots & \vdots\\ O_4  &   O_4       & \ldots & O_4 
\end{matrix}\right),
$$
where $R$ is the following rotation matrix on $\mathbb{R}^{4 \times 4}$:
$$
  R = \left(\begin{matrix}0 & 0 & 1 & 0 \\ 0 & 0 & 0 & 1 \\ -1 & 0 & 0 &0 \\ 0 & -1 & 0 & 0\end{matrix}\right),
$$
and $O_4$ is the $4\times 4$ zero matrix. The effect, in this case, is to apply the antisymmetric transformation only on the first two coordinates, which correspond to the positions of the particles composing the dimer. 
\\\\
Using an Euler-Maruyama discretisation of (\ref{eq:dimer_sde_antisymmetric}),  samples are generated for $N = 16$ particles, with parameter values $\beta = 1.0$, $\sigma = 1.0$, $\epsilon= 1.0$, $h = 1.0$, $w = 0$, and $\alpha \in \lbrace 0, 2, 4, 6, 8, 10 \rbrace$.  For each value of $\alpha$, a single realisation, starting from $\mathbf{0}$ is simulated for $10^{10}$ timesteps of size $\Delta t = 10^{-5}$, so that the total running time is $T = 10^{5}$.  The asymptotic variance is approximated from a single realisation of the process using a batch--means estimator (see \cite[IV.5]{asmussen2007stochastic}).  In Figure \ref{fig:dimer_xi_var} we plot the value of the generated estimator, along with the asymptotic variance for different values of $\alpha$ and for $J_1$ and $J_2$.  The first perturbation provides marginally lower asymptotic variance for this particular observable, giving rise to a $66\%$ decrease, as opposed to a $50\%$ decrease for $J_2$, increasing $\alpha$ from $0$ to $10$.  This decrease in asymptotic variance, comes at the cost of increased computation time due to having to reduce the stepsize to compensate for the discretisation error and numerical instability caused by the large drift.  While this may appear as a negative result, since the antisymmetric drift terms were chosen arbitrarily, no claim is made about optimality.  An important issue is whether the increase in computational cost outweighs the benefits.  This issue will be studied more carefully in Section \ref{sec:comp_cost}.



\begin{figure}[ht]
    \begin{subfigure}[b]{0.5\textwidth}
                \includegraphics[width=\textwidth]{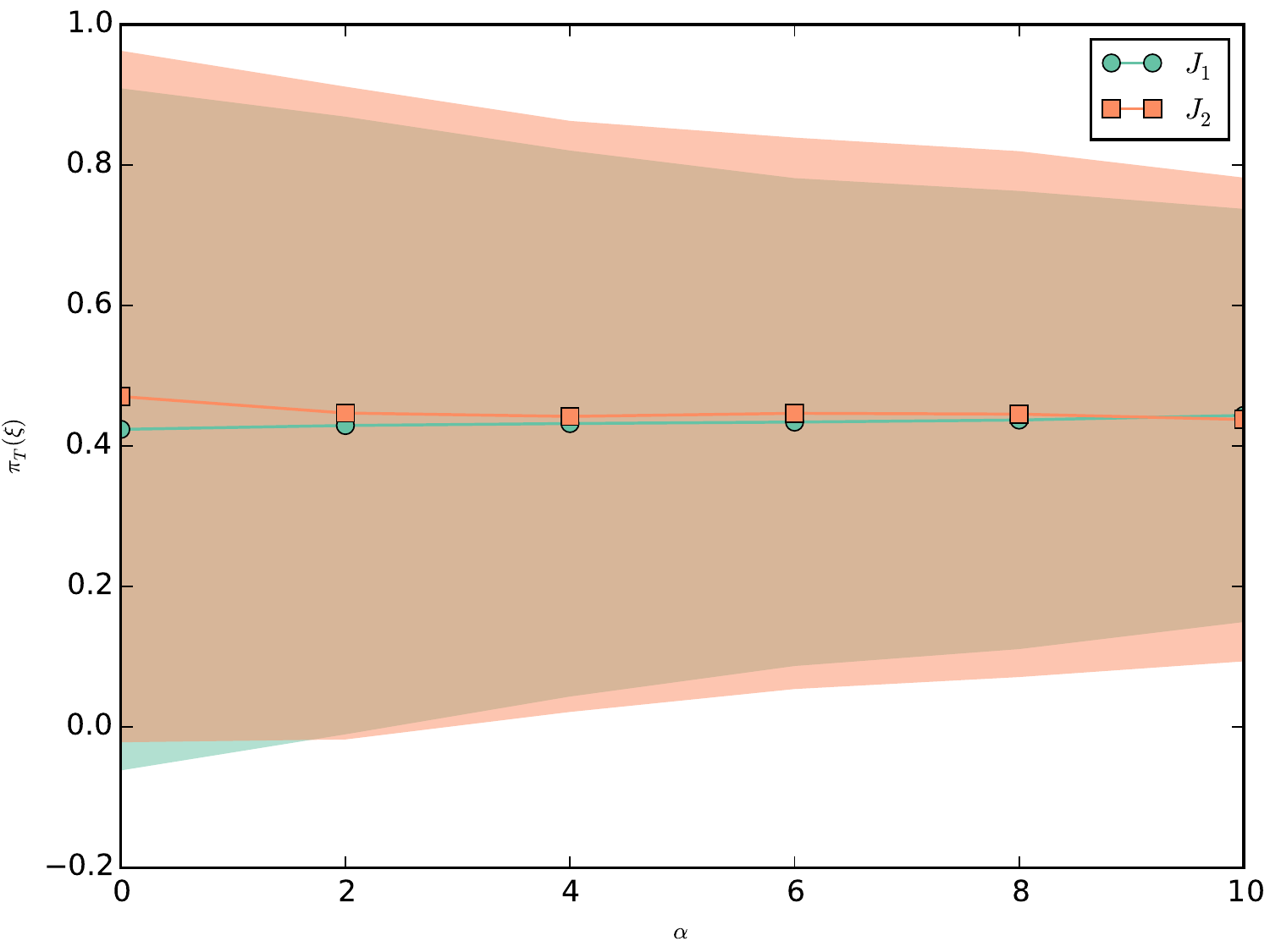}
        \end{subfigure}%
      ~
      \begin{subfigure}[b]{0.5\textwidth}
                \includegraphics[width=\textwidth]{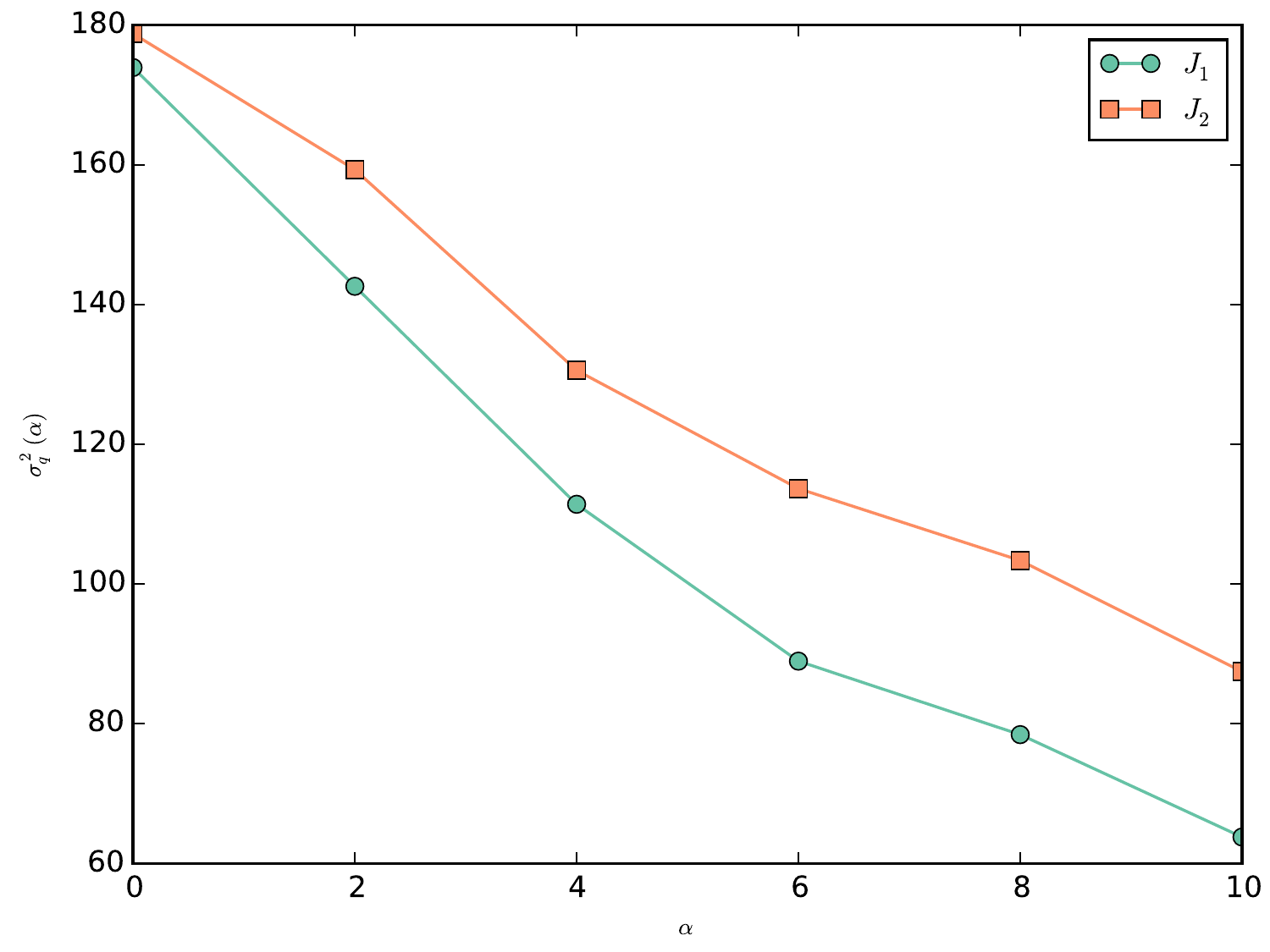}
        \end{subfigure}%
\caption{Value of the estimator $\pi_T(\xi)$ for $T = 10^5$ with corresponding $95\%$--confidence intervals, for the dimer model, generated from a single realisation of (\ref{eq:dimer_sde_antisymmetric}).  The right plot shows the asymptotic variance of the estimator over varying $\alpha$, with antisymmetric drifts defined by $J_1\nabla V(x)$ and $J_2\nabla V(x)$, respectively.}
\label{fig:dimer_xi_var}
\end{figure}
%
%

%
%
\section{The Computational Cost of Nonreversible Langevin Samplers}
\label{sec:comp_cost}
As observed Section \ref{sec:numerics}, while increasing $\alpha$ is guaranteed to decrease the asymptotic variance $\sigma^2_{f}(\alpha)$ for the estimator $\pi_T(f)$,  this will also give rise to an increase in the discretisation error arising from the particular discretisation being used.  Moreover, as $\alpha$ increases, the SDE (\ref{e:nonreversible}) becomes more and more stiff, to the extent that the discretisation becomes numerically unstable unless the stepsize is chosen to be accordingly small.  As a result, any discretisation $\lbrace X^{(n)} \rbrace_{n=1}^N$ will require smaller timesteps to guarantee that the stationary distribution of $X^{(n)}$ is sufficiently close to $\pi(x)$.   This tradeoff between computational cost and asymptotic variance of the estimator must be taken into consideration when comparing reversible to nonreversible diffusions.
\\\\
A rigorous error analysis of the long time average estimator was carried out in \cite{mattingly2010convergence}.   In this paper careful estimates of the mean square error 
$$\mbox{Err}^2_{N,\Delta t}(f) := \mathbb{E}\left|\frac{1}{N}\sum_{n=1}^N f(X^{(n)}) - \pi(f)\right|^2$$
were derived for discretisations of a general overdamped Langevin diffusion on the unit torus $\mathbb{T}^d$, see also \cite{talay1990expansion}.   In particular, in  \cite[Theorem 5.2]{mattingly2010convergence} it is shown that the mean squared error can be bounded as follows
\begin{equation}
\label{eq:numerical_estimator_error}
  Err_{N, \Delta_t}^2[f] \leq C\left(\Delta t^2 + \frac{1}{N\Delta t}\right),
\end{equation}
where $C$ is a positive constant independent of $\Delta t$ and $N$, which depends on the coefficients of the SDE and the observable $f$.  This estimate makes explicit the tradeoff between discretisation error and sampling error. For a fixed computational budget $N$, the right hand side of (\ref{eq:numerical_estimator_error}) is minimized when $\Delta t \propto {(N)^{-\frac{1}{3}}}.$   For an SDE of the form (\ref{e:nonreversible}), we expect that the constant $C$ will increase with $\alpha$.  Identifying the correct scaling of the error with respect to $\alpha$ is an interesting problem that we intend to study.
\\\\
To obtain a clearer idea of the bias variance tradeoff we compute the mean-square error for the Euler-Maruyama discretisation for two particular examples.    In Figure \ref{fig:computational_cost_warped1}, we consider the warped Gaussian distribution defined by (\ref{eq:warped_potential}) and the observable $f(x) = |x|^2$.  A value for $\pi(f)$ is obtained by integrating $\int_{\mathbb{R}^d} f(x)\pi(dx)$ numerically, using a globally adaptive quadrature scheme to obtain an approximation with error less than $10^{-12}$.   In Figure \ref{fig:computational_cost_warped1} we plot the relative mean--squared-error defined by $\left(Err_{N, \Delta t}[f]/\pi(f)\right)^2$ for an Euler-Maruyama discretisation of (\ref{e:nonreversible}), {for timestep $\Delta t$ in the interval $[2^{-5}, 1]$. } The total number of timesteps is kept fixed at $N = 10^6$.  For each value of $\alpha$, the mean square error is approximated over an ensemble  of $256$ independent realisations.  Missing points indicate finite time blowup of the discretized diffusion.  The dashed line denotes the MSE generated from the corresponding MALA sampler, namely an Euler--Maruyama discretisation of the reversible diffusion with an added Metropolis--Hastings accept-reject step.  We note that both the Euler-Maruyama discretisation and the MALA sampler require one evaluation of the gradient term $\nabla \log\pi$ per timestep,  so that comparing an ergodic average obtained from $10^6$ steps of each scheme is fair.
\\\\
A trade-off between discretisation error and variance is evident from Figure \ref{fig:computational_cost_warped1}, and is consistent with the error estimate (\ref{eq:numerical_estimator_error}).  We observe that the nonreversible Langevin sampler outperforms the reversible Langevin sampler by an order of magnitude, with the lowest MSE attained when $\alpha = 10$.  As $\alpha$ is increased beyond this point, the discretisation error is balanced by the decrease in variance, and we observe no further gain in performance.  Nonetheless, despite the fact that the MALA scheme has no bias, the nonreversible sampler, with $\alpha = 5$, outperforms MALA (in terms of MSE) by a significant factor of $8.8$.
\\\\
We repeat this numerical experiment for the target distribution $\pi$ given by a standard Gaussian distribution in $\mathbb{R}^d$ and observable $f(x) = x_2 + x_3$.  In this case $\pi(f)$ is exacty $0$.  We use the linear diffusion specified by (\ref{eq:sde_linear}) where the antisymmetric matrix $J$ is given in (\ref{eq:J_linear}), which is optimal for this observable.  We plot the (absolute) MSE for the estimator $\pi_T(f)$ in Figure \ref{fig:computational_cost_ou1}.  While the smallest MSE is attained by the nonreversible Langevin sampler, when $\alpha = 25$, the increase in performance is only marginal.  This is due to the fact that increasingly smaller timesteps must be taken to ensure that the EM approximation does not blow  up.  Indeed, the $\alpha = 25$ sampler would not converge to a finite value for $\Delta t$ greater than $10^{-3}$, while the reversible sampler ($\alpha = 0$) and the MALA scheme were accurate even for timesteps of order $1$.  
\\\\
From both examples it is clear that managing the numerical stability and discretisation error of the skew-symmetric drift term is essential for any practical implementation of the nonreversible sampling scheme.  This suggests that using higher order and/or more stable numerical integrators to compute long time averages would be beneficial to eliminate the bias arising from discretisation, as well as permit the use of larger timesteps.  Naturally, such schemes would require multiple evaluations of $\nabla \log \pi$ at each step, thus it is possible that the additional computational cost offsets any performance gain.   In the remainder of this section, we will perform the same numerical experiments using an integrator based on a Strang splitting \cite{strang1968construction,leimkuhler2004simulating} of the stochastic reversible and deterministic nonreversible dynamics. The reversible part will be simulated using a standard MALA scheme and the nonreversible flow using an appropriate higher--order  integrator.  Indeed, denote by $\Phi_{r, t}$ the evolution of the reversible SDE (\ref{eq:sde1}) from time $0$ to $t$, and let $\Phi_{n, t}(x)$ denote the flow map corresponding to the ODE:
$$
  \dot{z}(t) = \gamma(z(t)), \quad z(0) = x.
$$
We shall consider an integrator based on the following map from time $t$ to $t + \Delta t$:
$$
  \Psi_{\Delta t} = \Phi_{r, \Delta t/2}\circ \Phi_{n, \Delta t}\circ \Phi_{r, \Delta t/2}.
$$
For this implementation, we approximate $\Phi_{r, \Delta t}(x)$ using a single step of a MALA scheme with proposal based on (\ref{eq:sde1}) with stepsize $\Delta t$. The nonreversible flow $\Phi_{n, \Delta t}$ is approximated using a fourth-order Runge--Kutta method.  
\\\\
We leave the justification and analysis of this scheme as the goal of future work, and in this paper simply use it to compute a long time average approximation to $\pi(f)$ and compare the MSE with that of a corresponding reversible MALA scheme.   To obtain a fair comparison between the results obtained by MALA and the splitting scheme, we note that while a careful implementation of MALA requires only one evaluation of $\nabla \log\pi$ per timestep, the splitting scheme requires six evaluations of $\nabla\log\pi$ per timestep (naively a single timestep would require $2$ evaluations for each reversible substep and $4$ evaluations for the nonreversible substep, however we can reuse two evaluations of  $\nabla\log \pi$ between the steps). Thus, we shall compare the MSE obtained from trajectories of $10^6$ timesteps of the nonreversible sampler with $6\cdot 10^6$ timesteps of the corresponding MALA scheme,  for stepsizes ranging from $10^{-5}$ to $1$.  The results for the warped Gaussian distribution in $\mathbb{R}^2$ and standard Gaussian in $\mathbb{R}^3$ are plotted in Figures \ref{fig:computational_cost_warped2} and \ref{fig:computational_cost_ou2}, respectively.   {Note that we omit the $\alpha = 0$ case since, in this case, the splitting scheme reduces to standard the MALA scheme}.  We observe that with this splitting scheme, the nonreversible sampler outperforms MALA by a factor of $13$ for the warped Gaussian model, and by a factor of $20$ for the standard Gaussian model.  The benefits of the splitting scheme appear to be twofold:  firstly the  integrator is more stable,  in both models, the long time simulation of $X_t^\gamma$ did not blow up, even for large values of $\alpha$, and for $\Delta t = 0.1$. Moreover, compared to the corresponding Euler--Maruyama discretisation, the MSE is consistently an order of magnitude less.
\\\\
While this splitting scheme is only a first step into properly investigating appropriate  integrators for nonreversible Langevin schemes, the above numerical experiments demonstrate clearly that there is a significant benefit in doing so, which motivates future investigation.


\begin{figure}[tp]
        \centering
        \begin{subfigure}[b]{0.5\textwidth}
                \includegraphics[width=\textwidth]{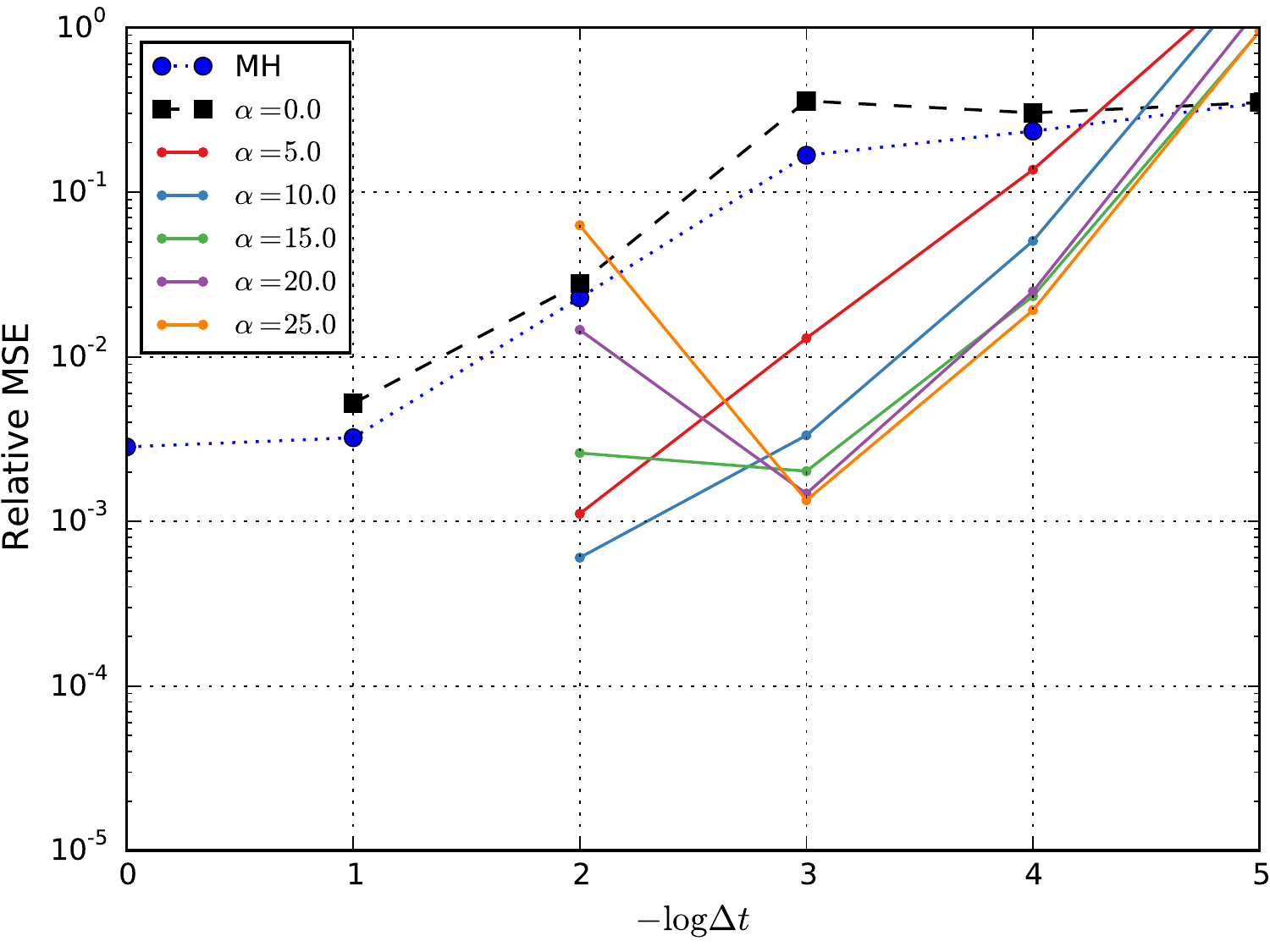}
                \caption{\label{fig:computational_cost_warped1} Relative MSE of the Euler-Maruyama discretisation.}
        \end{subfigure}%
        \begin{subfigure}[b]{0.5\textwidth}
                \includegraphics[width=\textwidth]{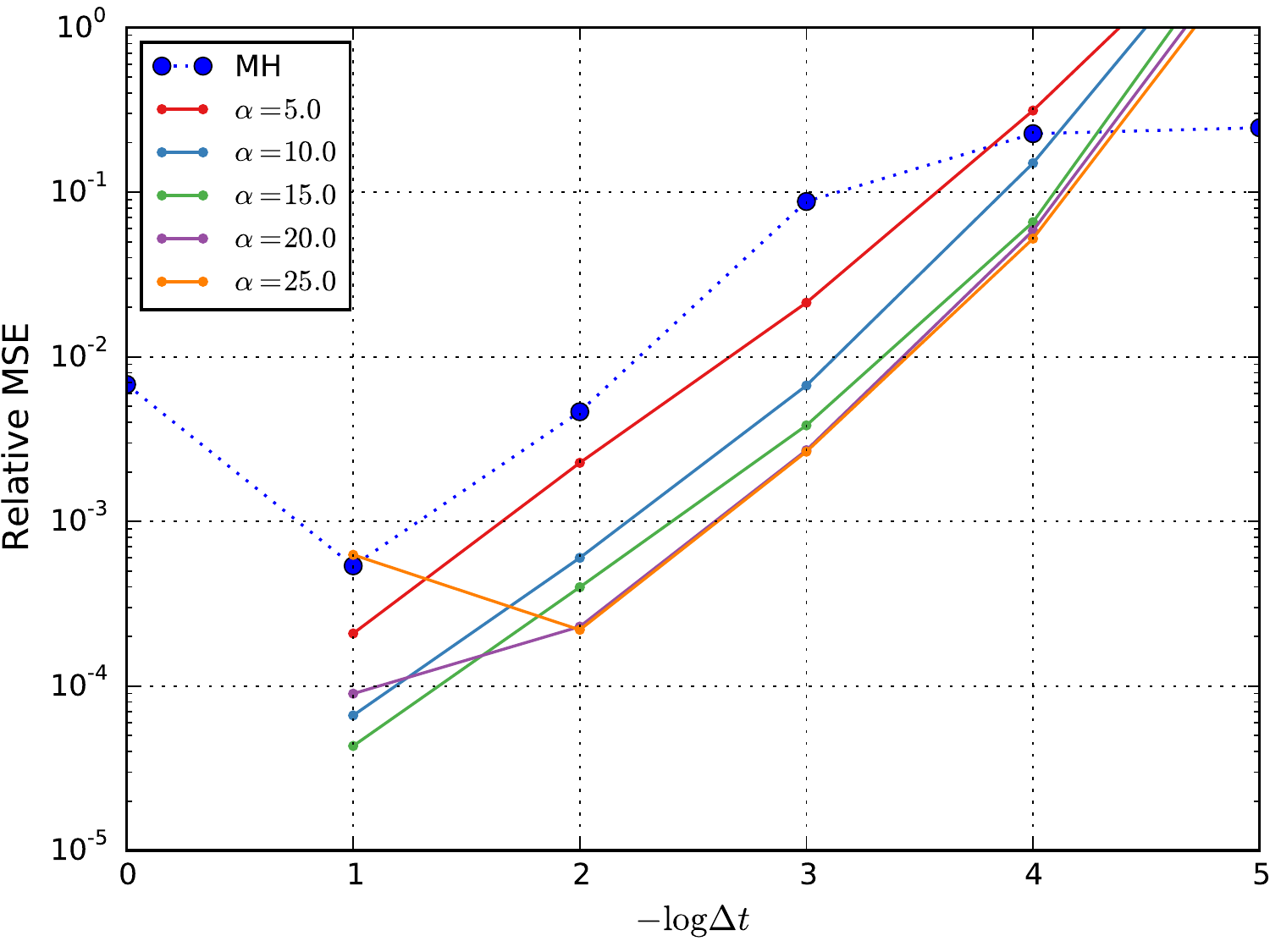}
                \caption{\label{fig:computational_cost_warped2} Relative MSE of the splitting scheme.}
        \end{subfigure}
\caption{Plot of the relative mean-square error as a function of the timestep size $\Delta t$ for the observable $f(x) = |x|^2$ of a warped Gaussian distribution defined by (\ref{eq:warped_potential}), with a fixed computational budget of $10^6$ evaluations of $\nabla\log\pi$ for Figure \ref{fig:computational_cost_warped1} and $6\cdot 10^6$ for Figure \ref{fig:computational_cost_warped2}. The dotted line denotes the relative MSE for the MALA scheme.}
\label{fig:computational_cost_warped} 
\end{figure}


\begin{figure}[tp]
        \centering
        \begin{subfigure}[b]{0.5\textwidth}
                \includegraphics[width=\textwidth]{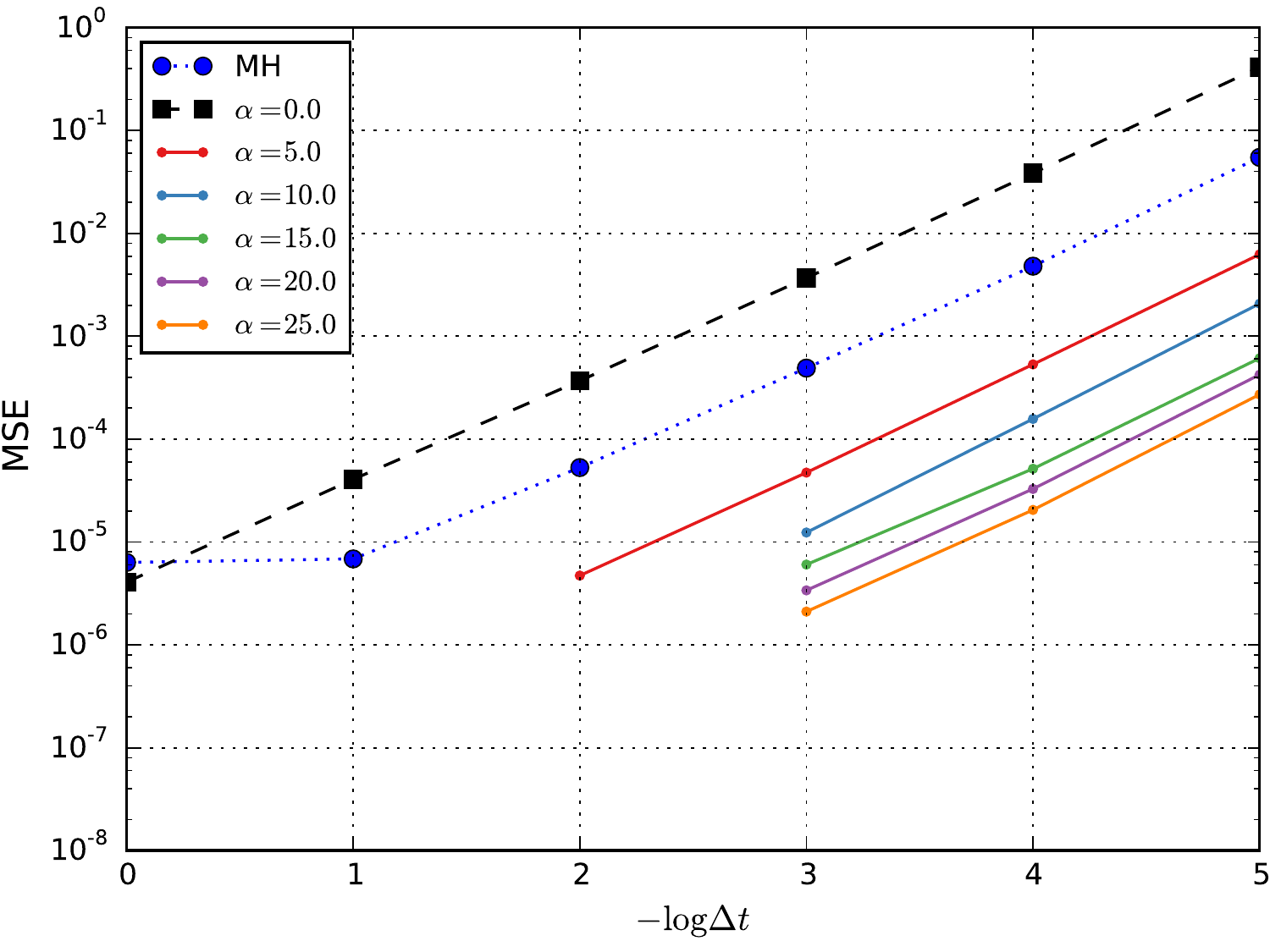}
                \caption{\label{fig:computational_cost_ou1} MSE for the Euler-Maruyama discretisation.}
        \end{subfigure}%
        \begin{subfigure}[b]{0.5\textwidth}
                \includegraphics[width=\textwidth]{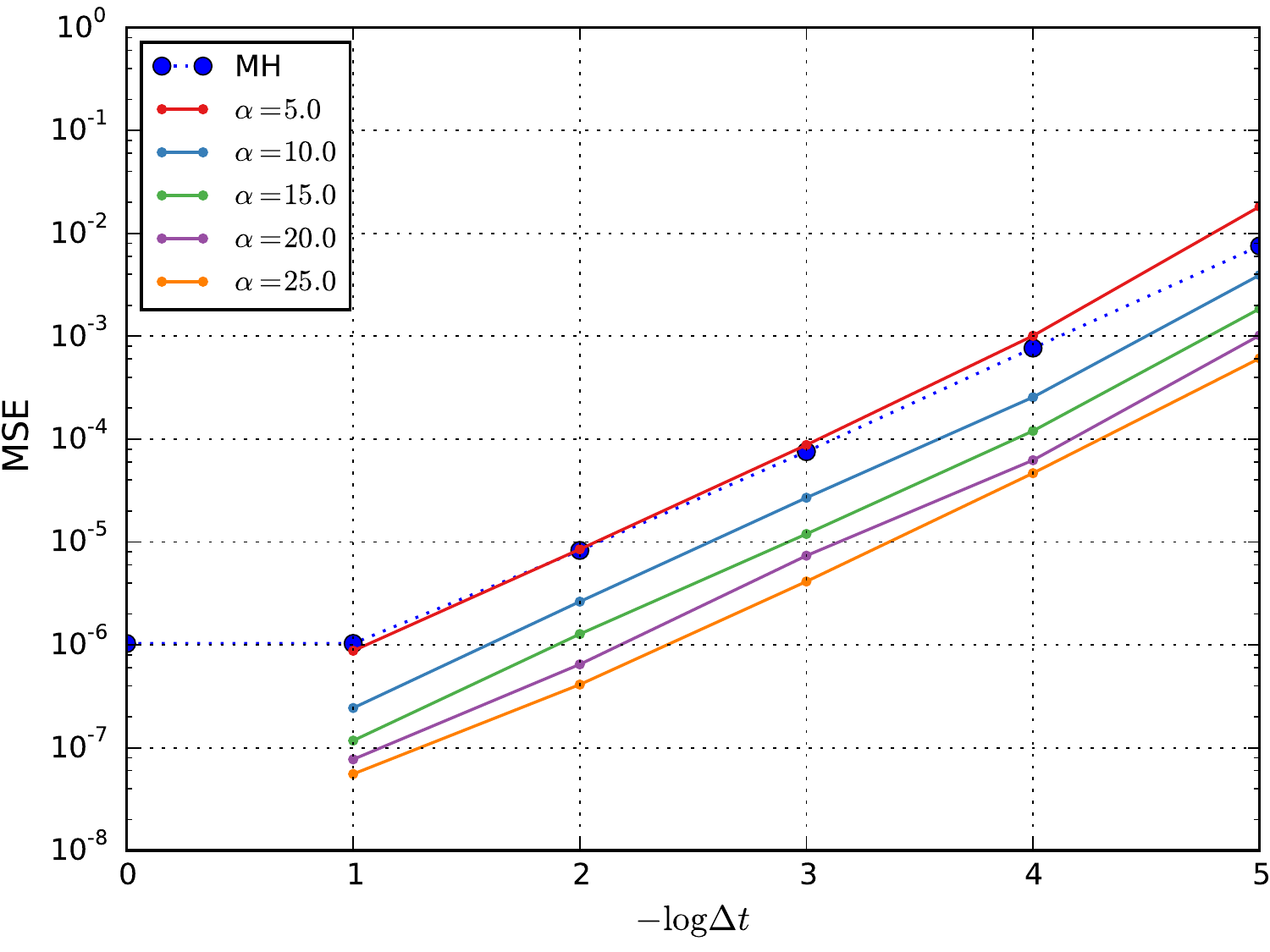}
                \caption{\label{fig:computational_cost_ou2} MSE for the splitting scheme.}
        \end{subfigure}
\caption{Plot of the mean-square error as a function of the timestep size $\Delta t$ for a linear observable of a Gaussian distribution, so that $X^\gamma_t$ is a linear diffusion process.  A fixed computational budget of $10^6$ evaluations of $\nabla\log\pi$ is imposed in Figure \ref{fig:computational_cost_ou1}, and $6\cdot 10^{6}$ in Figure \ref{fig:computational_cost_ou2}.  The dotted line depicts the corresponding error for the Metropolized sampler. }
\label{fig:computational_cost_ou} 
\end{figure}

%
\section{Conclusions and Further Work}
\label{sec:conclusions}

In this paper we have presented a detailed analytical and numerical study of the effect of nonreversible perturbations to Langevin samplers. In particular, we have focused on the effect on the asymptotic variance of adding a nonreversible drift to the overdamped Langevin dynamics. Our theoretical analysis,  presented for diffusions with periodic coefficients and for diffusions with linear drift for which a complete analytical study can be performed, and our numerical investigations on toy models  clearly show that a judicious choice of the nonreversible drift can lead to a substantial reduction in the asymptotic variance.  On the other hand, as observed from the dimer model example in Section \ref{subsec:dimer}, an arbitrary choice of nonreversible drift will not always give rise to significant improvement. We have also presented a careful study of the computational cost of the algorithm based on a nonreversible Langevin sampler, in which the competing effects of reducing the asymptotic  variance and of increasing the stiffness due to the addition of a nonreversible drift are monitored. The main conclusions that can be drawn from our numerical experiments is that a nonreversible Langevin sampler with close--to--the--optimal choice of the nonreversible drift significantly outperforms the (reversible) Metropolis-Hastings sampler.
\\\\
There are many open problems that need to be studied further:

\begin{enumerate}

\item The effect of using degenerate, hypoelliptic diffusions for sampling from a given distribution.

\item Combining the use of nonreversible Langevin samplers with standard variance reduction techniques such as the zero variance reduction MCMC methodology~\cite{dellaportas2012control}.

\item Optimizing Langevin samplers within the class of reversible diffusions.

\item The development of nonreversible Metropolis--Hastings algorithms based on the above techniques, possibly related to approach described in \cite{bierkens2014non}.

\item The development and analysis of numerical schemes specifically designed to simulate nonreversible Langevin diffusions.
\end{enumerate}

All these topics are currently under investigation.

\section*{Acknowledgments}
The work of TL is supported by the European Research Council under the European Union's Seventh Framework Program/ ERC Grant Agreement number 614492.  GP thanks Ch. Doering for useful discussions and comments. The research of AD is supported by the EPSRC under grant No. EP/J009636/1. GP is partially supported by the EPSRC under grants No. EP/J009636/1, EP/L024926/1, EP/L020564/1 and EP/L025159/1. TL would like to thank J. Roussel for pointing out some mistakes in a preliminary version of the manuscript.


\bibliographystyle{plain}
\end{document}